\theoremstyle{definition}
\newtheorem*{definition*}{Definition}
\theoremstyle{plain}
\newtheorem{theorem}{Theorem}[section]
\newtheorem{proposition}[theorem]{Proposition}
\newtheorem{lemma}[theorem]{Lemma}
\renewcommand{\phi}{\varphi}
\renewcommand{\theta}{\vartheta}
\newtheorem{thmmain}{Theorem}
\newcommand{\F}{\mathbf{F}}
\newcommand{\Z}{\mathbf{Z}}
\newcommand{\N}{\mathbf{N}}
\newcommand{\Q}{\mathbf{Q}}
\renewcommand{\L}{\Lambda}
\DeclareMathOperator{\Cent}{Cent}
\DeclareMathOperator{\Stab}{Stab}
\DeclareMathOperator{\Sym}{Sym}
\DeclareMathOperator{\GL}{GL}
\DeclareMathOperator{\SL}{SL}
\DeclareMathOperator{\AGL}{AGL}
\DeclareMathOperator{\PGL}{PGL}
\DeclareMathOperator{\PSL}{PSL}
\DeclareMathOperator{\Con}{Con}
\DeclareMathOperator{\Aut}{Aut}
\DeclareMathOperator{\characteristic}{char}
\DeclareMathOperator{\Gal}{Gal}
\newcommand{\dotcupop}{\;\ensuremath{\mathaccent\cdot\cup}\; }
\newcommand{\join}{\vee}
\newcommand{\meet}{\wedge}
\newcommand{\partequiv}{\equiv}
\newcommand{\PGammaL}{\mathrm{P}\Gamma\mathrm{L}}
\newcounter{defnlistcnt}
\newenvironment{defnlist}%
    {\setcounter{defnlistcnt}{0}%
    \begin{list}{(\arabic{defnlistcnt})}{%
        \usecounter{defnlistcnt}%
        \setlength{\labelwidth}{18pt}%
        \setlength{\topsep}{1pt}%
        \setlength{\leftmargin}{30pt}%
        \setlength{\itemsep}{0pt}%
        \setlength{\itemindent}{0pt}}%
    }%
    {\end{list}}%
\newcounter{thmlistcnt}
\newenvironment{thmlist}%
    {\setcounter{thmlistcnt}{0}%
    \begin{list}{\emph{(\alph{thmlistcnt})}}{%
        \usecounter{thmlistcnt}%
        \setlength{\labelwidth}{18pt}%
        \setlength{\topsep}{1pt}%
        \setlength{\leftmargin}{30pt}%
        \setlength{\itemsep}{0pt}%
        \setlength{\itemindent}{0pt}}%
    }%
    {\end{list}}%
\newenvironment{bulletlist}{%
    \begin{list}{$\bullet$}{%
        \setlength{\labelwidth}{18pt}%
        \setlength{\topsep}{1pt}%
        \setlength{\leftmargin}{30pt}%
        \setlength{\itemsep}{0pt}%
        \setlength{\itemindent}{0pt}}%
    }%
    {\end{list}}%
\newcounter{numlistcnt}
\newenvironment{numlist}%
    {\setcounter{numlistcnt}{0}%
    \begin{list}{\emph{(\arabic{numlistcnt})}}{%
        \usecounter{numlistcnt}%
        \setlength{\labelwidth}{18pt}%
        \setlength{\topsep}{2pt}%
        \setlength{\leftmargin}{30pt}%
        \setlength{\itemsep}{2pt}%
        \setlength{\itemindent}{0pt}}%
    }%
    {\end{list}}%
\newcommand{\Prufer}{Pr{\"u}fer\ }
\renewcommand{\P}{\mathbf{P}}
\newcommand{\p}{\mathcal{P}}
\newcommand{\s}{\mathcal{S}}
\newcommand{\n}{n}
\title{Orbit coherence in permutation groups}
\author{John R. Britnell \and Mark Wildon}
\address{Heilbronn Institute for Mathematical Research, School of Mathematics,
University of Bristol, University Walk, Bristol, BS8 1TW}
\email{j.r.britnell@bristol.ac.uk}
\address{Department of Mathematics, Royal Holloway, University of London, Egham, Surrey TW20 0EX}
\email{mark.wildon@rhul.ac.uk}
\subjclass[2010]{20B10; (secondary) 20E22, 06A12}
\begin{document}


\bigskip

\begin{abstract}{
This paper introduces the notion of orbit coherence in a permutation group.
Let~$G$ be a group of permutations of a set~$\Omega$. Let~$\pi(G)$ be the set of partitions of~$\Omega$
which arise as the orbit partition of an element of~$G$.
The set of partitions of $\Omega$ is naturally ordered by refinement, and admits join and meet operations.
We say that $G$ is join-coherent if~$\pi(G)$ is join-closed, and
meet-coherent if~$\pi(G)$ is meet-closed.

Our central theorem states that the centralizer in~$\Sym(\Omega)$ of any
permutation~$g$ is meet-coherent, and subject to a certain finiteness condition on the
orbits of~$g$, also join-coherent. In particular, if $\Omega$ is a finite set then
the orbit partitions of elements of the centralizer in $\Sym(\Omega)$ of $g$ form a lattice.


A related result states that the intransitive direct product and the imprimitive wreath product of two
finite permutation groups are join-coherent if and only if each of the groups is join-coherent.
We also classify the groups $G$ such that $\pi(G)$ is a chain and prove two
further  theorems classifying the primitive join-coherent groups of finite
degree and the join-coherent groups of degree $n$ normalizing a subgroup generated
by an $n$-cycle.}
\end{abstract}

\maketitle

\thispagestyle{empty}

\section{Introduction}
Let~$G$ be a group acting on a set~$\Omega$.
Each element~$g$ of~$G$ has associated with it a partition~$\pi(g)$ of~$\Omega$, whose
parts are the orbits of~$g$. We define~$\pi(G)$ to be the set $\{\pi(g)\mid g\in G\}$.

If $\mathcal{P},\mathcal{Q}$
are two partitions of $\Omega$ then we say that $\mathcal{P}$ is a \emph{refinement} of $\mathcal{Q}$, and
write $\mathcal{P}\preccurlyeq \mathcal{Q}$, if every part of~$\mathcal{P}$ is contained in a part of~$\mathcal{Q}$.
The set of all partitions of $\Omega$ forms a lattice under~$\preccurlyeq$. That is to say,
any two partitions $\mathcal{P}$ and $\mathcal{Q}$ have a supremum with respect to refinement,
denoted $\mathcal{P} \join \mathcal{Q}$, and an infimum with respect to refinement, denoted $\mathcal{P} \meet
\mathcal{Q}$. The parts of $\mathcal{P} \meet \mathcal{Q}$ are precisely the non-empty intersections
of the parts of $\mathcal{P}$ and $\mathcal{Q}$; for a description of $\mathcal{P} \join \mathcal{Q}$
see Section~\ref{section:lattices} below. The lattice of all partitions of $\Omega$ is called the
\emph{congruence
lattice} on~$\Omega$, and is denoted $\Con(\Omega)$.

The object of this paper is to prove
a number of interesting structural and classification results
on
the permutation groups possessing one or both of the following 
 properties.


%

\begin{definition*}
Let~$G$ be a group acting on a set. 
\begin{defnlist}
\item We say that~$G$ is \emph{join-coherent} if~$\pi(G)$ is closed under~$\vee$.
\item We say that~$G$ is \emph{meet-coherent} if~$\pi(G)$ is closed under~$\wedge$.
\end{defnlist}
\end{definition*}



We refer to these properties collectively as \emph{orbit coherence} properties.
Our first main theorem describes the groups $G$ such that $\pi(G)$ is a chain.
It is clear that any such group is both join- and meet-coherent.

\setcounter{thmmain}{0}
\begin{thmmain}
\label{thmmain:chain}
Let $\Omega$ be a set, and let $G \le \Sym(\Omega)$ be such that $\pi(G)$ is a chain.
There is a prime~$p$ such that the length of any cycle of any element of $G$ is a power of $p$.
Furthermore,~$G$ is abelian, and either periodic or torsion-free.
\begin{thmlist}
\item If $G$ is periodic then $G$ is either a finite cyclic group of $p$-power order,
or else isomorphic to the \Prufer $p$-group.
\item If $G$ is torsion-free then $G$ is isomorphic to a subgroup of the $p$-adic rational numbers~$\Q_p$.
In this case $G$ has infinitely many orbits on $\Omega$, and the permutation group induced by its action
on any single orbit is periodic.
\end{thmlist}
\end{thmmain}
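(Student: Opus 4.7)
The plan is to establish the five assertions of the theorem in the order stated, each step feeding into the next.

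\emph{Step 1: common prime $p$.} If $g \in G$ has a cycle of length $m$ divisible by distinct primes $p$ and $q$, then $g^{m/p}$ and $g^{m/q}$ restrict on that cycle to partitions with parts of coprime sizes, which cannot be nested---contradicting the chain. Uniformity of the prime across $G$ then follows: a $p^a$-cycle of one element must lie in a cycle (of prime-power length) of any comparable element.

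\emph{Step 2: $G$ is abelian.} Given $g, h \in G$ with $\pi(g) \preccurlyeq \pi(h)$, I first observe that for every $\rho \in G$, the partitions $\pi(\rho)$ and $h \cdot \pi(\rho) = \pi(h\rho h^{-1})$ have identical multisets of part sizes and are comparable in the chain, hence equal. Thus every partition in $\pi(G)$ is $h$-invariant. Restricting to an $h$-orbit $O$ of size $p^b$, the $h|_O$-invariant partitions of $O$ form a chain of $b+1$ partitions---the orbit partitions of $\langle h^{p^{b-a}}|_O \rangle$ for $a = 0, \ldots, b$---so $g$ restricts to $O$ as a product of $p^{b-a}$ disjoint $p^a$-cycles on corresponding blocks $O_0, \ldots, O_{p^{b-a}-1}$. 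To show $g|_O \in \langle h|_O \rangle$, I apply the same invariance to every element of $\langle g, h \rangle \subseteq G$: examining the orbit partitions of the powers $g^n$ forces each block-cycle $g|_{O_i}$ to be a power $(h^{p^{b-a}}|_{O_i})^{s_i}$; then examining products $g h^{-s_0 p^{b-a}}$ shows that any variation in the $s_i$ produces an element whose orbit partition has non-uniform cycle lengths on different $O_i$'s, incompatible with $h|_O$-invariance. So the $s_i$ must coincide, $g|_O$ is a power of $h|_O$, and $g, h$ commute.

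\emph{Step 3: periodic or torsion-free.} Suppose $g \in G$ has finite order $p^a > 1$ and $h \in G$ has infinite order. The direction $\pi(h) \preccurlyeq \pi(g)$ is excluded: it would force every $h$-orbit to have size at most $p^a$, giving $h$ finite order. So $\pi(g) \preccurlyeq \pi(h)$, and Step~2 yields $g|_O = h^{k(O)}|_O$ on each $h$-orbit $O$. Since $h$ has unbounded orbit sizes but $g$ is not a global power of $h$ (any such power would be identity or of infinite order), the local exponents $k(O)$ must vary in a way that produces, by the same template as Step~2, an element of $G$ whose orbit partition fails to be $h$-invariant---contradicting the chain.

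\emph{Steps 4 and 5: classification.} In the periodic case $G$ is an abelian $p$-group with $\pi(G)$ a chain; a further variant of Step~2 applied to two elements $a, b$ of order $p$ shows that if $a \notin \langle b \rangle$ then $\pi(a)$ and $\pi(ab^j)$ are incomparable for suitable $j$, forcing $|G[p]| \le p$. The classical structure theorem then gives $G$ finite cyclic of $p$-power order or the \Prufer $p$-group. In the torsion-free case a parallel local-cyclicity argument embeds $G$ in $\Q$, and the prime-power cycle condition refines this into $\Q_p$; bounded orbit sizes would force $G$ periodic, so $G$ must have infinitely many orbits, and every element's action on each orbit $O$ has order dividing $|O|$, so $G|_O$ is periodic. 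The main obstacle throughout is the combinatorial core of Step~2: finding specific elements of $\langle g, h \rangle$ whose orbit partitions expose any misalignment in the $s_i$. All other steps reduce, via similar chain manipulations, to this central commutation fact.
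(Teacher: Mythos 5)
The most serious gap is in your torsion-free classification. You assert that ``a parallel local-cyclicity argument embeds $G$ in $\Q$, and the prime-power cycle condition refines this into $\Q_p$''. This cannot work: a torsion-free group with $\pi(G)$ a chain need not be locally cyclic and need not embed in $\Q$. The paper's own example in Section~\ref{section:regular} (the group generated by $g=\prod_i c_i$ and $h=\prod_i c_i^{\alpha_i}$, where $c_i$ is a $p^i$-cycle and $\alpha$ is an irrational $p$-adic unit) is a chain group that is free abelian of rank $2$, hence not a subgroup of $\Q$. This is exactly why the paper needs a genuinely different idea here: the kernels of $G$ on its orbits form a chain with trivial intersection, so $G$ embeds in an inverse limit of subgroups of the \Prufer $p$-group over a totally ordered index set, and Lemma~\ref{lemma:Pruferlimit} identifies that limit as $\P$ or $\Q_p$. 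Nothing in your outline supplies this inverse-limit step, and no refinement of an embedding into $\Q$ can replace it.

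The combinatorial core of your Step~2 also fails as stated. Take $\Omega=\Z/9\Z$, $h\colon x\mapsto x+1$, and $g=(0\,1\,2\,6\,7\,8\,3\,4\,5)$. Then $g^3=(0\,6\,3)(1\,7\,4)(2\,8\,5)$, so $\pi(g^n)=\pi(h^n)$ for every $n$, yet $g\notin\langle h\rangle$: the orbit partitions of the powers of $g$ can never force a block cycle to be a power of the corresponding power of $h$. What exposes this configuration is a mixed word, $gh^{-1}=(2\,5\,8)$, whose conjugate by $h$ is $(0\,3\,6)$ -- i.e.\ conjugation, which is exactly the paper's mechanism: the fixed-point/conjugation argument of Proposition~\ref{prop:chain1}(2) gives semiregularity on each orbit, and then, since a chain is automatically join-closed, Proposition~\ref{prop:regularjoin} gives local cyclicity orbitwise, with no hand-to-hand alignment of exponents. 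Moreover the invariance lemma you build Step~2 on (``same multiset of part sizes and comparable, hence equal'') is false over infinite $\Omega$: a partition with infinitely many singleton parts and infinitely many $2$-element parts is strictly refined by the partition obtained by merging two singletons, which has the identical multiset; and the infinite case is where the theorem's content lies. Finally, two smaller but real omissions: you never rule out infinite cycles (the paper does this first, via incomparable $\pi(g^a)$, $\pi(g^b)$), and your cross-element prime-uniformity step is a non sequitur -- a $p^a$-cycle sitting inside a $q^c$-cycle of a comparable element yields no contradiction (a $2$-set sits happily inside a $9$-set); uniformity again comes from semiregularity on orbits, not from comparability alone.
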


Our second theorem  determines when a direct product or wreath product
of permutation groups inherits join coherence from its factors.
The actions of these groups referred to in this theorem
are defined in Sections 4 and~5 below.


\begin{thmmain}
\label{thmmain:products}
Let $X$ and $Y$ be sets and let $G \le \Sym(X)$ and $H \le \Sym(Y)$ be  permutation groups.
\begin{thmlist}
\item If $G$ and $H$ are finite then
$G \times H$ is join-coherent in its product action on $X \times Y$ if and only if
$G$ and $H$ are 
join-coherent and have coprime orders.
\item If $Y$ is finite then
$G \wr H$ is join-coherent in its imprimitive action on $X \times Y$ if and only
if $G$ and $H$ are join-coherent.
\end{thmlist}
\end{thmmain}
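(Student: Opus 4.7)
For part (a), I would first handle necessity by testing join-coherence on well-chosen pairs. Restricting to elements of the form $(g,1)$ and $(g',1)$ reduces their join on $X \times Y$ to the join of $\pi(g)$ and $\pi(g')$ on $X$ (the $Y$-coordinate is frozen at every step), so $G$, and by symmetry $H$, must be join-coherent. For coprimality, if a prime $p$ divides both $|G|$ and $|H|$ then by Cauchy we may pick non-identity $p$-elements $g \in G, h \in H$; the join $\pi(g,1) \vee \pi(1,h)$ contains a full product $A \times B$ of $p$-orbits as a single part. However, the orbits of any $(g'',h'') \in G \times H$ restricted to a product of orbits $A'' \times B''$ comprise $\gcd(|A''|,|B''|)$ cycles of length $\operatorname{lcm}(|A''|,|B''|)$, so such a part can only be realized when $\gcd(|A|,|B|)=1$; since $|A|=|B|=p$, this is impossible. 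For sufficiency, the same cycle-count shows that under coprime orders $\pi(g,h) = \pi(g) \times \pi(h)$ (the partition whose parts are products of orbits) for every $(g,h) \in G \times H$, and the result follows from the lattice identity
\[
(\mathcal{P}_1 \times \mathcal{Q}_1) \vee (\mathcal{P}_2 \times \mathcal{Q}_2) = (\mathcal{P}_1 \vee \mathcal{P}_2) \times (\mathcal{Q}_1 \vee \mathcal{Q}_2),
\]
verified by a short alternating-path argument: to connect $(x,y)$ and $(x',y')$ within a part of the right-hand side, first move in the $X$-coordinate while keeping $y$ fixed, then move in the $Y$-coordinate while keeping $x'$ fixed.

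For part (b), necessity is analogous: elements of $G \wr H$ of the form $((1); h_i)$ recover the join-coherence of $H$, and elements supported on a single fibre recover that of $G$. The bulk of the work is sufficiency. The starting observation is that $\pi(\sigma_1) \vee \pi(\sigma_2)$ is exactly the orbit partition of $\langle \sigma_1, \sigma_2 \rangle$ on $X \times Y$; its $Y$-projection is the orbit partition of $\langle h_1, h_2 \rangle$ on $Y$, and so by join-coherence of $H$ equals $\pi(h'')$ for some $h'' \in H$. Fix an $h''$-orbit $B'' \subseteq Y$ of length $\ell$, choose a basepoint $y_0 \in B''$, and let $K$ denote the stabilizer of $y_0$ in $\langle \sigma_1, \sigma_2 \rangle$. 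Restricting each $\sigma \in K$ to the fibre $X \times \{y_0\}$ produces a subgroup $K_0 \le G$, and the join partition meets $X \times \{y_0\}$ in the partition of $X$ into $K_0$-orbits. Since $[\langle \sigma_1,\sigma_2 \rangle : K] = \ell$ is finite, Schreier's lemma gives $K_0$ as generated by at most $2\ell$ elements; the $K_0$-orbit partition is therefore a finite join in $\pi(G)$, which by join-coherence of $G$ equals $\pi(\tilde g'')$ for some $\tilde g'' \in G$.

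It remains to assemble the wreath-product element. For each $y_i := (h'')^i y_0 \in B''$, pick $\tau_i \in \langle \sigma_1, \sigma_2 \rangle$ with $\tau_i y_0 = y_i$ (taking $\tau_0 = 1$), and let $g_{y_i} \in G$ be the element with $\tau_i(x,y_0) = (g_{y_i} x, y_i)$. Setting $g''_{y_i} := g_{y_{i+1}} g_{y_i}^{-1}$ for $i < \ell-1$ and $g''_{y_{\ell-1}} := \tilde g'' g_{y_{\ell-1}}^{-1}$ ensures that the partial cyclic products along $B''$ satisfy $P_i = g_{y_i}$ and that the full cyclic product equals $\tilde g''$. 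A short computation then identifies the orbits of $\sigma'' := ((g''_y); h'')$ on $X \times B''$ with the sets $\bigl\{(g_{y_i}(A), y_i) : 0 \le i < \ell\bigr\}$ indexed by $A \in \pi(\tilde g'')$, which is exactly the $\langle \sigma_1, \sigma_2 \rangle$-orbit decomposition of $X \times B''$; taking the union over $h''$-orbits produces the required $\sigma'' \in G \wr H$.

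The main obstacle is this final assembly step. The join's restrictions to different fibres of $X \times B''$ are related by conjugation by the path elements $g_{y_i}$, and the coordinates $g''_y$ must be chosen so that the partial products track these twists while the total cyclic product realizes $\tilde g''$. Verifying that $\sigma''$ then reproduces the correct orbit structure on every fibre (not merely at $y_0$) relies on the conjugation formula $\tilde g(y_i) = P_i \tilde g(y_0) P_i^{-1}$ relating cyclic products at shifted basepoints. Once this matching is in place, the rest of the argument is essentially bookkeeping.
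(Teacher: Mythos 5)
Your proposal is correct and follows essentially the same route as the paper: necessity via restriction to blocks and fibres (and, in (a), a $p$-element obstruction to coprimality), sufficiency in (a) via $\pi(g,h)=\pi(g)\times\pi(h)$ for coprime orders, and sufficiency in (b) by realizing the $Y$-partition through join-coherence of $H$, realizing each fibre partition through the finite-index (Schreier) fibre stabilizer and join-coherence of $G$, and then adjusting one coordinate of the wreath element so the cyclic product along each block realizes the prescribed element of $G$. The only difference is presentational: the paper factors the assembly step through a general characterization of orbit partitions of elements of $G\wr H$ (Lemma 5.1), whereas you build the required element directly for the join partition.
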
 



Our third main theorem, on centralizers in a symmetric group, is the central result of this paper.

\begin{thmmain}
\label{thmmain:Centralizers}
Let~$\Omega$ be a set, let $G = \Sym(\Omega)$, and let~$g\in G$. 
For $k\in\mathbf{N}\cup\{\infty\}$ let~$\n_k$ be the number of orbits of~$g$ of size~$k$. 
\begin{thmlist}
\item $\Cent_{G}(g)$ is meet-coherent. 
\item If~$\n_k$ is finite for all $k\neq 1$, including $k = \infty$,
then $\Cent_G(g)$ is join-coherent. 
\end{thmlist}
\end{thmmain}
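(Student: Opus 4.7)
The plan is to exploit the standard structure theorem for centralizers in a symmetric group: writing $\Omega_k \subseteq \Omega$ for the union of $g$-orbits of size $k$, we have $\Omega = \bigsqcup_k \Omega_k$, and $\Cent_G(g)$ acts on each $\Omega_k$ as $C_k \wr \Sym(n_k)$ in its imprimitive action (interpreting $C_\infty = \mathbf{Z}$). The centralizer itself is the unrestricted intransitive direct product of these factors, and $\pi(h) = \bigsqcup_k \pi(h|_{\Omega_k})$ for every $h \in \Cent_G(g)$. Since both meet and join of partitions respect this decomposition, each part of the theorem reduces to proving the corresponding coherence property for each factor $C_k \wr \Sym(n_k)$ acting on $\Omega_k$.

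For part (b), the $k = 1$ factor is just $\Sym(n_1)$, whose orbit partitions are precisely the partitions of $\Omega_1$ into countable blocks; this class is join-closed because $\langle h_1, h_2\rangle$ is finitely generated and so has countable orbits. For $k \neq 1$ the hypothesis forces $n_k$ to be finite, and Theorem~\ref{thmmain:products}(b) reduces the join-coherence of $C_k \wr \Sym(n_k)$ to that of $C_k$ and $\Sym(n_k)$. The latter is immediate (every partition of a finite set is realised as an orbit partition), while for $C_k$ acting regularly $\pi(C_k)$ is the lattice of coset partitions of the cyclic subgroups of $C_k$, with $\langle a\rangle \vee \langle b\rangle = \langle \gcd(a,b)\rangle$ (and the obvious analogue for $\mathbf{Z}$). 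Assembling these componentwise gives part (b).

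For part (a) I would give a direct construction. Let $\mathcal{P} = \pi(h_1)\wedge\pi(h_2)$, whose parts are the nonempty intersections $P = A \cap B$ with $A$ an $h_1$-orbit and $B$ an $h_2$-orbit. The partition $\mathcal{P}$ is permuted by $g$, so fix a representative $P$ in each $g$-orbit of parts and let $d \in \mathbf{N}$ be minimal with $g^d P = P$ (or $d = \infty$ if no such integer exists). The key structural observation is that $g^d A = A$ (otherwise $g^d A \cap A = \emptyset$, contradicting $g^d P = P \subseteq A$), and since $g^d$ commutes with $h_1$ while $h_1|_A$ is a single cycle whose centraliser in $\Sym(A)$ is $\langle h_1|_A\rangle$, we get $g^d|_A = h_1^m|_A$ for some integer $m$. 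Thus $g^d|_P$ is the restriction of a power of a single cycle, and in particular all of its cycles share a common length. A brief case analysis on $|A|$ then produces a single-orbit permutation $c$ of $P$ commuting with $g^d|_P$; setting $h|_{g^iP} := g^i c g^{-i}$ for $0 \leq i < d$, and repeating this over all $g$-orbits of parts of $\mathcal{P}$, yields an element of $\Cent_G(g)$ with $\pi(h) = \mathcal{P}$.

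The main obstacle is establishing the existence of $c$ in part (a). The dangerous configuration would be $g^d|_P$ having infinitely many cycles of a common finite length $\ell \geq 2$, since $\Cent_{\Sym(P)}(g^d|_P) = C_\ell \wr \Sym(\infty)$ then contains no single-orbit element. I would rule this out by noting that a power of a single cycle has only finitely many nontrivial cycles in either case: if $|A|$ is finite then $g^d|_A$ has at most $|A|$ cycles, and if $|A|$ is infinite then any nontrivial power of the $\mathbf{Z}$-cycle $h_1|_A$ consists entirely of infinite cycles, so in both cases the offending configuration cannot occur.
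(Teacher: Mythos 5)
Your proposal is correct, and while part (b) follows the paper's own route exactly (the decomposition of $\Cent_G(g)$ as an unrestricted intransitive product of groups $C_k \wr \Sym(\Delta_k)$, with the $k\neq 1$ factors handled by the wreath-product result and the $k=1$ factor being a full symmetric group), your part (a) is organized genuinely differently. The paper reduces to the case of semiregular $g$ and proves a standalone characterization (Lemma~\ref{lemma:CentCrit}) of exactly which partitions occur as orbit partitions of elements of $\Cent_{\Sym(\Omega)}(g)$: namely $g$-invariance, countability of parts, and the condition that an infinite part meets only finitely many $g$-orbits or meets each in one point; meet-coherence then follows by checking that these conditions are stable under $\wedge$. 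You instead work with arbitrary $g$ and build, directly from $\mathcal{P}=\pi(h_1)\wedge\pi(h_2)$, a commuting element with orbit partition $\mathcal{P}$, the key extra input being that each part $P$ lies inside a single $h_1$-cycle $A$, so the return map $g^d|_P$ is the restriction of a power of that cycle and hence has all cycles of one length and (when nontrivial) only finitely many of them. Your mechanism of choosing a transitive $c$ on a representative part commuting with $g^d|_P$ and transporting it by conjugation around the $g$-orbit of parts is essentially the same construction that appears inside the paper's proof of Lemma~\ref{lemma:CentCrit}, but you avoid both the semiregular reduction and the abstract criterion; what the paper's formulation buys in exchange is reusability, since the same criterion immediately gives Proposition~\ref{prop:orbitcondition}, the converse showing that the finiteness hypothesis in (b) is necessary, which your argument does not produce.

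Two small points to tighten. First, when $d=\infty$ the translates $Pg^i$ must be indexed by all $i\in\Z$, not $0\le i<d$, though in that case no wrap-around condition on $c$ is needed. Second, the configuration blocking the existence of a single-orbit $c$ commuting with $g^d|_P$ is not only infinitely many cycles of a common finite length $\ell\ge 2$ but equally infinitely many infinite cycles (the centralizer $\Z\wr\Sym(\Delta)$ with $\Delta$ infinite also has no transitive element); your justification in the infinite-$|A|$ case, as written, only rules out finite cycle lengths. The fact you actually need, and which is true, is that a nontrivial power $h_1^m|_A$ of an infinite cycle has exactly $|m|$ cycles, so $g^d|_P$ is either trivial (and any transitive permutation of the countable set $P$ serves) or has finitely many cycles of a common, possibly infinite, length, in which case the standard block-cycle-with-shift construction yields $c$.
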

We also show 
that
if the condition on the values $\n_k$ in the second part of the theorem
fails for a permutation $g \in \Sym(\Omega)$, then the centralizer in $\Sym(\Omega)$ of $g$ is
not join-coherent. Therefore this condition is necessary.

Theorem \ref{thmmain:Centralizers} implies, in particular,
that any centralizer in a finite symmetric group is both join- and meet-coherent.
This is a remarkable fact, and the starting point of our investigation, at least
chronologically. The observation that this important class of groups exhibits orbit coherence
justifies our study of these properties, and motivates the search for further examples.


The second part of the paper contains a partial classification of finite transitive join-coherent
permutation groups.
Our analysis depends on the fact that such a group
necessarily contains a full cycle, since the join of all the
orbit partitions of elements of a transitive permutation group is the trivial one-part partition.
The primitive permutation groups containing full cycles are known; 
we use this classification to prove the following theorem.

\begin{thmmain}\label{thmmain:primitive}
A primitive permutation group of finite degree 
is join-coherent if and only
if it is a symmetric group or a subgroup of $\AGL_1(\F_p)$ in its action on $p$ points,
where $p$ is prime.
\end{thmmain}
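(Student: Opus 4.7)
The plan is to prove each direction of the equivalence separately.

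For the sufficiency direction, we must verify that $\Sym(n)$ and every transitive subgroup of $\AGL_1(\F_p)$ is join-coherent. Since $\pi(\Sym(n)) = \Con(\{1,\ldots,n\})$ is the full partition lattice, the symmetric case is immediate. A transitive subgroup $G \le \AGL_1(\F_p)$ must contain the entire translation subgroup $T \cong \F_p$ (since $\F_p$ has no proper nontrivial additive subgroup), and so decomposes as $G = T \rtimes M$ for some $M \le \F_p^\times$. The non-identity elements of $G$ fall into two types: translations, which act as a single $p$-cycle and contribute the trivial one-part partition to $\pi(G)$; and maps $x \mapsto ax + b$ with $a \in M \setminus \{1\}$, which have a unique fixed point $x_0 = b/(1-a)$ and partition $\F_p \setminus \{x_0\}$ into translated cosets of $\langle a \rangle$. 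A brief case analysis completes the argument: the join of two partitions of the latter type with a common fixed point $x_0$ is again of this form, with $\langle a \rangle$ replaced by $\langle a_1, a_2 \rangle \le M$; and the join of two partitions with distinct fixed points is the trivial one-part partition, realized by a translation.

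For the necessity direction, let $G \le \Sym(\Omega)$ be primitive of finite degree $n$ and join-coherent. Since $G$ is transitive, the join over all $g \in G$ of $\pi(g)$ is the one-part partition (for any $x, y \in \Omega$ there exists $g \in G$ with $g(x) = y$, placing $x$ and $y$ in the same orbit of $g$). By join-coherence, this trivial partition lies in $\pi(G)$, forcing $G$ to contain an $n$-cycle. The proof then invokes the classical classification of primitive permutation groups containing a full cycle: such a group is one of (i) a transitive subgroup of $\AGL_1(\F_p)$ on $p$ points; (ii) $A_n$ or $\Sym(n)$ in the natural action; (iii) a subgroup $H$ with $\PGL_d(q) \le H \le \PGammaL_d(q)$ acting on the $(q^d-1)/(q-1)$ points of projective space for some $d \ge 2$; or (iv) one of $\PSL_2(11)$, $M_{11}$, $M_{23}$ in its natural $2$-transitive action.

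It remains to eliminate the alternating group in case (ii) for $n \ge 4$, together with cases (iii) and (iv). The alternating case is dispatched uniformly: the double transpositions $\sigma_1 = (1\ 2)(3\ 4)$ and $\sigma_2 = (1\ 3)(2\ 4)$ belong to $A_n$ and have orbit partitions whose join is $\bigl\{\{1,2,3,4\}, \{5\}, \ldots, \{n\}\bigr\}$, a partition realized in $\Sym(n)$ only by $4$-cycles, which are odd and hence absent from $A_n$. For the three sporadic groups in (iv) one inspects the character table to enumerate the cycle types present, and then explicitly exhibits two orbit partitions whose join corresponds to a cycle type missing from that list. I expect the main obstacle to be case (iii), where a uniform argument is needed over all $d \ge 2$ and all prime powers $q$. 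The strategy is to exploit the rigid control that rational canonical form exerts on cycle types in $\PGammaL_d(q)$: by selecting, for instance, one element fixing a projective hyperplane pointwise and another of Singer type acting transitively on projective points, and computing the join of their orbit partitions, one should obtain a partition whose specific mixture of short and long blocks cannot arise from any element of $\PGammaL_d(q)$.
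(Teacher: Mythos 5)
Your skeleton matches the paper's: transitivity plus join-coherence forces a full cycle, and one then runs through the known classification (Jones) of primitive groups of degree $n$ containing an $n$-cycle. Your sufficiency argument for transitive subgroups of $\AGL_1(\F_p)$ is a sound direct computation (the paper instead quotes its general result that Frobenius groups of prime degree are join-coherent), your elimination of $A_n$ via the two double transpositions is correct, and deferring the three sporadic cases to explicit computation is exactly what the paper does.

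The genuine gap is case (iii), the groups $G$ with $\PGL_d(\F_q)\le G\le\PGammaL_d(\F_q)$, which you yourself flag as the main obstacle but do not actually handle. First, the specific pair you suggest cannot work: a Singer-type element is transitive on projective points, so its orbit partition is the one-part partition, and the join with anything is again the one-part partition, which is realized by that very element. Second, no uniform ``missing cycle type'' argument over all $d\ge 2$ and all $q$ can exist, because this family contains genuinely join-coherent members: $\PGL_2(\F_2)\cong S_3$, $\PGL_2(\F_3)\cong S_4$ and $\PGammaL_2(\F_4)\cong S_5$ (as permutation groups on $3$, $4$, $5$ points), which possess every cycle type; any correct argument must carve these out and identify them as symmetric groups, and must also distinguish intermediate groups (e.g.\ $\PGL_2(\F_4)\cong A_5$ is not join-coherent while $\PGammaL_2(\F_4)\cong S_5$ is). This case is where essentially all the work lies in the paper: one restricts to set-stabilizers of $2$-dimensional subspaces to reduce to $d=2$, analyses $\PGL_2(K)$ using sharp $3$-transitivity (a specific $S_3$-subgroup whose join forces any realizing element $k$ to satisfy $k^3=1$, giving $|K|\le 3$), treats the extensions by field automorphisms by bounding orders of $p$-elements (isolating $\F_4$), and disposes of a handful of degree-$3$ groups over $\F_2,\F_3,\F_4$ by computer. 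None of this is supplied or replaceable by the rational-canonical-form heuristic you sketch, so as it stands the necessity direction is incomplete.
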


We also give a complete classification of the finite transitive join-coherent groups
in which the subgroup generated by a full cycle is normal.


\begin{thmmain}\label{thmmain:RNCS}
Let~$G$ be a permutation group on~$n$ points, containing a normal cyclic subgroup of order~$n$ acting regularly. Let~$n$ have prime factorization~$\prod_ip_i^{a_i}$.
Then~$G$ is join-coherent if and only if there exists for each~$i$ a transitive permutation
group~$G_i$ on~$p_i^{a_i}$ points, such that:
\begin{bulletlist}
\item[$\bullet$] if $a_i>1$ then~$G_i$ is either cyclic or the extension of a cyclic group of
order~$p_i^{a_i}$ by the automorphism
$x\mapsto x^r$ where $r = p_i^{a_i-1}+1$,
\item[$\bullet$] if $a_i=1$ then~$G_i$ is a subgroup of the Frobenius group of order $p(p-1)$,
\item[$\bullet$] the orders of the groups~$G_i$ are mutually coprime,
\item[$\bullet$] $G$ is permutation
isomorphic to the direct product of the groups~$G_i$ in its product action.
\end{bulletlist}
\end{thmmain}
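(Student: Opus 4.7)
The plan is to reduce the classification to a primary decomposition of $n$ via the Chinese Remainder Theorem, and then to classify the admissible extensions in each prime-power factor. I would first identify $\Omega$ with $\Z/n\Z$ using the regular action of the normal cyclic subgroup $C=\langle c\rangle$, so that $G\le N_{\Sym(\Omega)}(C)=\mathrm{Hol}(\Z/n\Z)=\Z/n\Z\rtimes(\Z/n\Z)^\times$. The CRT gives a permutation isomorphism of this holomorph with $\prod_i\mathrm{Hol}(\Z/p_i^{a_i}\Z)$ in its product action on $\prod_i\Z/p_i^{a_i}\Z$. Letting $G_i$ denote the image of $G$ under projection to $\mathrm{Hol}(\Z/p_i^{a_i}\Z)$, each $G_i$ is transitive on $p_i^{a_i}$ points and contains $\Z/p_i^{a_i}\Z$ as a normal regular cyclic subgroup, and $G\le\prod_i G_i$ in product action.

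The key structural step is to show that join-coherence of $G$ forces $G=\prod_i G_i$ and that the orders $|G_i|$ are pairwise coprime. Since $G\supseteq C$, the pure translations in each coordinate lie in $G$, and Goursat's lemma identifies $G$ as a subdirect product of the $G_i$ determined by matched quotients. Any non-trivial matching, or any common prime divisor of $|G_i|$ and $|G_j|$ for $i\ne j$, would allow me to produce an element $\gamma\in G$ whose multiplicative components in the $i$-th and $j$-th coordinates are both non-trivial and rigidly coupled (that is, neither can occur in $G$ without the other). Joining $\pi(\gamma)$ with $\pi(c^{n/p_j^{a_j}})$, a full translation off the $j$-th coordinate, gives a partition whose realisation as $\pi(\delta)$ for any single $\delta\in\prod_i\mathrm{Hol}(\Z/p_i^{a_i}\Z)$ would force $\delta$ to have trivial multiplicative part in one of the two coordinates, contradicting the coupling in $G$. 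This forces $G=\prod_i G_i$ with pairwise coprime orders, and the converse direction then follows by iterated application of Theorem 2(a).

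For the prime-power classification I would determine the join-coherent transitive subgroups $G_0\le\mathrm{Hol}(\Z/p^a\Z)$ containing $\Z/p^a\Z$. For $a=1$, $\mathrm{Hol}(\Z/p\Z)=\AGL_1(\F_p)$ acts primitively, and Theorem 4 gives that every subgroup of $\AGL_1(\F_p)$ is join-coherent, yielding the $a_i=1$ bullet. For $a\ge 2$ I must show that the image $\bar G_0\le(\Z/p^a\Z)^\times$ is either trivial or the unique order-$p$ subgroup $\langle r\rangle$ with $r=p^{a-1}+1$. Supposing $\bar G_0$ contains $s\notin\langle r\rangle$, I would take $\alpha=(0,s)\in G_0$, compute $\pi(\alpha)$ from the cycle structure of $x\mapsto sx$ (governed by $v_p(s-1)$ and the multiplicative order of $s$), and form $\pi(\alpha)\vee\pi(c^{p^{a-1}})$, where $c^{p^{a-1}}$ partitions $\Z/p^a\Z$ into the $p$ cosets of $p\Z/p^a\Z$; the join has block sizes incompatible with the cycle type of any affine map $x\mapsto tx+u$ on $\Z/p^a\Z$, contradicting join-coherence. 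The hardest part throughout is the explicit production of these orbit-partition witnesses: it rests on a $p$-adic analysis of orders and fixed-point sets of affine maps on $\Z/p^a\Z$, determining exactly which cycle types occur in $\mathrm{Hol}(\Z/p^a\Z)$ and checking that the constructed joins are not among them.
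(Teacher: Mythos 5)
Your overall strategy (pass to the holomorph, split via CRT into prime-power coordinates, classify the prime-power factors) is the same as the paper's, and your $a\ge 2$ analysis is essentially the paper's Proposition~\ref{prop:RNCSconverse} in sketch form. The genuine gap is in the structural step where you claim join-coherence forces $G=\prod_i G_i$ with pairwise coprime orders. Your claimed dichotomy is false: if $G$ is the \emph{full} direct product $\prod_i G_i$ but $|G_i|$ and $|G_j|$ share a prime $q$, there is no ``rigidly coupled'' element at all (every combination of components occurs), so your witness has nothing to contradict and the coprimality assertion is left unproved. This is precisely where the paper uses a different argument (Propositions~\ref{prop:DPjoin} and~\ref{prop:DPconverse}): taking $q$-elements of maximal order in the two coordinates, the join of their orbit partitions has a part of size $q^{\alpha+\beta}$, which cannot be an orbit of any single element because the $q$-part of every element order in $G$ is too small. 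Some such Sylow/orbit-length argument is needed; the coupling-based witness cannot supply it.

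Moreover, even for a genuine coupling your contradiction does not go through as stated when $a_j\ge 2$. The join $\pi(\gamma)\vee\pi(c^{n/p_j^{a_j}})$ has parts of the form $P\times(\Z/p_j^{a_j}\Z)$ with $P$ an orbit of the projection of $\gamma$ off coordinate $j$, so a realizing $\delta$ is forced only to act as a \emph{full cycle} in coordinate $j$, not with trivial multiplicative part: full cycles in $\mathrm{Hol}(\Z/p^{a}\Z)$ can have nontrivial multiplicative part, e.g.\ $x\mapsto (p^{a-1}+1)x+1$ is a $p^{a}$-cycle. Hence ``trivial multiplicative part in one of the two coordinates'' does not follow, and the contradiction with the coupling is not established (it does work when $p_j^{a_j}$ is prime, which is why your test cases look fine). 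There is also a small slip in the prime-power part: $c^{p^{a-1}}$ is translation by $p^{a-1}$, whose orbits are the $p^{a-1}$ cosets of the order-$p$ subgroup $L$, not the $p$ cosets of $p\Z/p^{a}\Z$; the paper's Proposition~\ref{prop:RNCSconverse} works with $L$, joining a generator of $L$ with an automorphism and analysing which affine maps can have an orbit equal to $L$. Your converse direction (iterating Theorem~\ref{thmmain:products}(a)) and the $a_i=1$ case are fine.
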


Note that the permutation groups classified by Theorem \ref{thmmain:RNCS}
are always imprimitive, unless~$n$ is prime.
It would be interesting, but we believe difficult, to extend our results to a complete classification
of all finite transitive join-coherent permutation groups.
The principal obstruction to such a result is the apparently hard problem of
classifying
those transitive join-coherent
imprimitive permutation groups that
do not admit a non-trivial factorization as a direct product or a wreath product,
in the manner described in Theorem~\ref{thmmain:products}, and which do not normalize
a full cycle.
One example of such a group is the permutation group of degree $12$
generated by
\[ \renewcommand{\,}{\hskip3pt}
(1\,7)(4\,10), \quad (1\, 2\, 3\, 4\, 5\, 6\, 7\, 8\, 9\, 10\, 11\, 12).\]
It is not hard to check this group is an imprimitive join-coherent
subgroup of index $4$ in $C_4 \wr C_3$ and that it does not factorize
as a direct product or a wreath product.



In smaller degrees our results do yield a complete classification: every
join-coherent permutation group of degree at most $11$ is either a cyclic group acting
regularly, a symmetric group, one of the groups described in Theorem~\ref{thmmain:RNCS},
or an imprimitive wreath product of join-coherent groups of smaller degree,
as in Theorem~\ref{thmmain:products}(b).

The fact that there are no further join-coherent groups of degree at most~$11$, and also the join-coherence
of the group of degree $12$ presented above, have been verified computationally. In
Section~\ref{section:linear}, and
again in Section~\ref{section:primitive}, we require computer calculations to verify
that particular groups are not join-coherent. All of our computations have been performed
using Magma~\cite{Magma}. The code
for these computations is available from the second author's
website.\footnote{See \url{www.ma.rhul.ac.uk/~uvah099}.}

\subsection*{Further remarks and background}
\label{subsect:remarks}

It will be clear from the statement of our main results that the majority of them
concern join-coherence rather than meet-coherence. In part this is because
a finitely generated transitive
join-coherent permutation group must contain a full cycle, and the restriction
on the structure of the group that this imposes is very useful.
However an alternative characterization of join-coherence
suggests that it is a particularly natural property to study:
a permutation group~$G$ is join-coherent if and only if for every
finitely-generated subgroup~$H$ of~$G$, there exists an element $h\in G$ whose orbits are the same as the
orbits of~$H$. There is no similar characterization of meet-coherence in terms of subgroups.

There are groups which exhibit any combination of the properties
of join- and meet-coherence.
Any symmetric group is both join- and meet-coherent, but any non-cyclic alternating group is neither.
The group~\hbox{$C_2 \times C_2$} acting regularly on itself is meet- but not join-coherent.
Examples of groups that are join- but not meet-coherent are less easy to find, 
but one can check that the non-cyclic group of order~$21$,
in its action as a Frobenius group on~$7$ points, is such an example (see Section~\ref{section:Frobenius}
for our general results on Frobenius groups).

In the context of lattices, the operations~$\vee$ and~$\wedge$ are dual to one another.
This duality is not inherited to any great extent by the notions of join- and meet-coherence
of permutation groups. An asymmetry can be observed even
in
the congruence lattice of all partitions of a set:
compare for example the two parts of Lemma~\ref{lemma:updn} below.
For this reason, while there are some parts of the paper, for example Section~\ref{section:DP},
where join- and meet-coherence admit a common treatment, it is usually necessary to treat
each property separately.




Literature on the orbit partitions of permutations is surprisingly sparse. As we hope that this paper
shows, there are interesting general properties that remain to be discovered,
and we believe that further study is warranted.
One earlier investigation which perhaps has something of the same flavour is that of Cameron~\cite{CameronCycleClosed}
into cycle-closed permutation groups. If~$G$ is a permutation group on a finite set then the
\emph{cycle-closure}~$C(G)$ is the
group generated by all of the cycles of elements of~$G$. Cameron proves that any
group which is equal to its cycle-closure is isomorphic, as a permutation group,
to a direct product of symmetric groups in their natural action
and cyclic groups acting regularly, with the factors acting on disjoint sets.
He also shows that if~$G=G_0$, and $G_{i+1}=C(G_i)$, then $G_4=G_3$ and that there exist groups for which
$G_2\neq G_3$.

There is an extensive literature on the lattice of subspaces of a finite-dimensional vector space
invariant under a group of linear transformations.
In this context both the lattice elements and the lattice operations differ from ours, and
so there is no immediate connection to our situation.
Indeed, we show in Proposition~\ref{prop:Vjoin} below
that the general linear group $\GL(V)$ is never join-coherent when $\dim V > 1$,
except in the case when $V = \F_2^2$, in which case it acts on $V \setminus \{0\}$
as the full symmetric group.
Nonetheless, there are certain parallels that it is interesting to observe.
For instance, if~$T$ is an invertible
linear map on a $K$-vector space $V$, then by \cite[Theorem~2]{BrickmanFillmore},
the lattice of invariant subspaces of $V$
is a chain if and only if $T$ is cyclic of primary type.
Analogously, it follows from 
Theorem~\ref{thmmain:chain}
that if $G$ is a finite permutation
group then $\pi(G)$ is a chain if and only if $G$ is cyclic of prime-power order.
Thus if we regard~$T$ as a permutation of $V$, 
then~$\pi(\left<T\right>)$ is a chain
if and only if $T$ has prime-power order.
When $K$ has prime characteristic, this situation arises
either when $T- I$ is nilpotent,
or when there is a prime $p$ such that $V$ is a direct
sum of subspaces on which $T$
acts as a Singer element of $p$-power order. Such elements exist, for example, 
in $\GL_d(\mathbf{F}_{2^a})$ whenever $2^{ad} - 1$ is a prime power.
For an introduction
to the theory of invariant subspaces we refer the reader to~\cite{BrickmanFillmore}.


There are various areas of group theory in which lattices have previously arisen
which are not directly related to orbit partitions. Subgroup lattices, for example, have been well studied.
A well-known theorem of Ore~\cite{Ore} states that the
subgroup lattice of a group~$G$ is distributive if and only if~$G$ is locally cyclic.
Locally cyclic groups are also important in this paper: in
Proposition~\ref{prop:regularjoin} we show that they are precisely the groups
that are join-coherent in their regular action. A $p$-group is locally
cyclic if and only if it is a subgroup of the \Prufer $p$-group;
these groups appear
in Theorem~\ref{thmmain:chain}, as the class of transitive permutation groups $G$
such that~$\pi(G)$ is a chain.


\subsection*{Outline}
The outline of this paper is as follows.
In Section~\ref{section:lattices} we prove some general
results with a lattice-theoretic flavour that will be used throughout the paper.
We begin our structural results in Section~\ref{section:regular} where
we determine when a group acting regularly on itself is join- or meet-coherent. This section also contains
a proof of Theorem~\ref{thmmain:chain} on the
permutation groups $G$ for which~$\pi(G)$ is a chain.

Theorem~\ref{thmmain:products}
is proved for direct products in Proposition~\ref{prop:DPjoin}, 
and for wreath products
in Proposition~\ref{prop:imprimitive} 
and in Section~\ref{section:WP}.
Theorem~\ref{thmmain:Centralizers} on centralizers
is proved in Section~\ref{section:Centralizers}. We have chosen to offer logically independent
arguments in Section~\ref{section:WP} and Section~\ref{section:Centralizers}, even though
the results in these sections are quite closely connected.
This is partly so that they may be read independently, and partly because the two
lines of approach appear to offer different insights.

The second part
of the paper, which focusses on classification results, begins in
Section~\ref{section:Frobenius} where we classify join-closed Frobenius groups
of prime degree. In Section~\ref{section:linear} we determine when a linear group has a join-coherent
action on points or lines; these results are used in the proof of  Theorem~\ref{thmmain:primitive}
in Section~\ref{section:primitive}. Finally, Theorem~\ref{thmmain:RNCS} is proved
in Section~\ref{section:RNCS}.

To avoid having to specify common group actions every time they occur, we shall adopt the following conventions. Any group mentioned as acting on a set at its first appearance
will be assumed always to act on that set, unless another action is explicitly given. In particular,
$\Sym(\Omega)$ always
acts naturally on the set $\Omega$,
and the finite symmetric group~$S_n$ always acts on~$n$ points. 
The cyclic group~$C_k$, for $k \in \N$,
acts on itself by translation. All maps are written on the right.


\section{Partitions and imprimitive actions}
\label{section:lattices}

In this section we collect some facts about lattices of partitions that will be useful
in later parts of the paper.  For an introduction to the general theory, see for instance \cite{DaveyPriestley}.

Given a set partition $\mathcal{P}$ of a set~$\Omega$
we define a corresponding relation~$\equiv_\mathcal{P}$ on~$\Omega$ in which $x \equiv_\mathcal{P} y$ if and only if
$x$ and $y$ lie in the same part of $\mathcal{P}$. If $\mathcal{P}$ and $\mathcal{Q}$
are set partitions of $\Omega$ then it is not hard to see that $\mathcal{P} \join \mathcal{Q}$ is the set partition
$\mathcal{R}$ such that $\equiv_\mathcal{R}$ is the transitive closure of the relation
$\equiv$ defined on $\Omega$ by
\[ x \equiv y \iff \text{$x \equiv_\mathcal{P} y$ or $x \equiv_\mathcal{Q} y$.} \]
Similarly $\mathcal{P} \meet \mathcal{Q}$ is the set partition $\mathcal{R}$
such that
\[ x \equiv_\mathcal{R} y \iff \text{$x \equiv_\mathcal{P} y$ and $x \equiv_\mathcal{Q} y$}. \]
Equivalently, as we have already remarked,
\[ \mathcal{P} \meet \mathcal{Q} = \{ P \cap Q \mid P \in \mathcal{P},
Q \in \mathcal{Q}, P \cap Q \not= \varnothing \}. \]

The congruence lattice $\Con(\Omega)$ of all partitions of $\Omega$ is
distributive, i.e.~the lattice operations~$\vee$ and~$\wedge$ distribute over one another.
In the language of lattice theory, a permutation group $G \le \Sym(\Omega)$ is join-coherent if and only if
 $\pi(G)$ is an upper subsemilattice of $\Con(\Omega)$, meet-coherent if and only if
$\pi(G)$ is a lower subsemilattice of $\Con(\Omega)$, and both join- and meet-coherent
if and only if $\pi(G)$ is a sublattice of $\Con(\Omega)$.


\begin{lemma}\label{lemma:distributivity}
Let~$L$ be a distributive lattice with respect to $\preccurlyeq$, and let $x\in L$.
\begin{numlist}
\item Define
\[
\mathrm{Up}(x)=\{y\in L \mid x\preccurlyeq y\},
\]
\[
\mathrm{Dn}(x)=\{y\in L \mid y\preccurlyeq x\}.
\]
Then $\mathrm{Up}(x)$ and $\mathrm{Dn}(x)$ are sublattices of~$L$.
\item The maps $\phi^x:L\longrightarrow \mathrm{Up}(x)$ and $\phi_x:L\longrightarrow \mathrm{Dn}(x)$
defined by
\[
y\phi^x = y\vee x,\quad y\phi_x = y\wedge x,
\]
are lattice homomorphisms.
\end{numlist}
\end{lemma}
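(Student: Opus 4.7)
The plan is to treat the two parts in turn, observing that distributivity is needed only at one specific step in part~(2), and that part~(1) does not require distributivity at all.

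For part~(1), I would verify directly that $\mathrm{Up}(x)$ is closed under both lattice operations. If $y_1, y_2 \in \mathrm{Up}(x)$ then $x \preccurlyeq y_1 \preccurlyeq y_1 \vee y_2$, so $y_1 \vee y_2 \in \mathrm{Up}(x)$; and since $x$ is a common lower bound of $y_1$ and $y_2$, it is $\preccurlyeq y_1 \wedge y_2$ by the universal property of the infimum, so $y_1 \wedge y_2 \in \mathrm{Up}(x)$. The argument for $\mathrm{Dn}(x)$ is entirely dual, using that $x$ is now a common upper bound.

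For part~(2), each map has image lying in the claimed sublattice by construction, so it remains to check preservation of $\vee$ and $\wedge$. For $\phi^x$, join-preservation
\[
(y_1 \vee y_2) \vee x = (y_1 \vee x) \vee (y_2 \vee x)
\]
is immediate from commutativity, associativity, and idempotence of $\vee$. The substantive identity is meet-preservation, namely
\[
(y_1 \wedge y_2) \vee x = (y_1 \vee x) \wedge (y_2 \vee x),
\]
which is precisely the distributivity of $\vee$ over $\wedge$; in a lattice this is equivalent to the assumed distributivity of $\wedge$ over $\vee$, and so holds in~$L$. The argument for $\phi_x$ is exactly dual: meet-preservation is automatic, while join-preservation is the identity $(y_1 \vee y_2) \wedge x = (y_1 \wedge x) \vee (y_2 \wedge x)$.

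There is no real obstacle here; the main point to flag is that distributivity is used only at the two mixed identities, and is indispensable there, since in a non-distributive lattice these identities can fail. Everything else reduces to the absorption, idempotence, commutativity and associativity laws. Since the statement is entirely standard, I would either refer the reader to a textbook such as \cite{DaveyPriestley} or present the short verification sketched above.
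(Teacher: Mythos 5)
Your proposal is correct and follows the same route as the paper, which simply notes that part (1) follows from the defining (universal) properties of $\vee$ and $\wedge$ and part (2) from distributivity; you have merely written out the details the paper leaves implicit. Your observation that distributivity is needed only for the two mixed identities in part (2), and that each distributive law implies the other, is accurate and consistent with the paper's treatment.
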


\begin{proof}
The first part follows directly from the defining property of
$\join$ and $\meet$, and the second part from the definition of distributivity.
\end{proof}

Note that in part (1) of the following lemma, $\Con(\mathcal{B})$ is the congruence lattice
on the set \hbox{$\{B \mid B \in \mathcal{B} \}$} of parts of a partition $\mathcal{B}$.

\begin{lemma}\label{lemma:updn}
Let $\Omega$ be a set and let $\mathcal{B}\in\Con(\Omega)$.

\begin{numlist}
\setcounter{numlistcnt}{0}
\item $\mathrm{Up}(\mathcal{B}) \cong \Con(\mathcal{B})$.
\item $\mathrm{Dn}(\mathcal{B}) \cong \prod_{B\in\mathcal{B}}\Con(B)$.
\end{numlist}
\end{lemma}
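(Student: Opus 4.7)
My plan for both parts is to exhibit an explicit bijection and then verify that it preserves joins and meets.

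For part~(1), I would observe that $\mathcal{P}\in\mathrm{Up}(\mathcal{B})$ precisely when every part of $\mathcal{P}$ is a union of parts of $\mathcal{B}$. I would then define $\Phi\colon \mathrm{Up}(\mathcal{B})\to\Con(\mathcal{B})$ by sending $\mathcal{P}$ to the partition of the set $\mathcal{B}$ whose classes group together those $B\in\mathcal{B}$ lying inside a common part of $\mathcal{P}$. The inverse sends a partition of $\mathcal{B}$ to the partition of $\Omega$ obtained by taking unions of blocks. These maps are mutually inverse bijections that preserve $\preccurlyeq$ in both directions, and in any lattice this is enough to conclude that $\Phi$ is a lattice isomorphism.

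For part~(2), I would observe that $\mathcal{P}\in\mathrm{Dn}(\mathcal{B})$ precisely when each part of $\mathcal{P}$ is contained in a unique part of $\mathcal{B}$. I would define $\Psi\colon\mathrm{Dn}(\mathcal{B})\to\prod_{B\in\mathcal{B}}\Con(B)$ by sending $\mathcal{P}$ to the tuple $(\mathcal{P}|_B)_{B\in\mathcal{B}}$ where $\mathcal{P}|_B=\{P\in\mathcal{P}:P\subseteq B\}$, and its inverse takes the union of the components. That $\Psi$ is a bijection is immediate; compatibility with meet follows at once from the intersection formula for $\meet$ given just above the lemma.

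The main subtlety, though not a serious obstacle, is compatibility with join. Here I would argue that because each part of $\mathcal{P}$ and each part of $\mathcal{Q}$ lies entirely within a single block of $\mathcal{B}$, the relation $x\equiv_{\mathcal{P}} y$ or $x\equiv_{\mathcal{Q}} y$ cannot relate elements of different blocks of $\mathcal{B}$; hence neither can its transitive closure, and so $(\mathcal{P}\vee\mathcal{Q})|_B=\mathcal{P}|_B\vee\mathcal{Q}|_B$ for every $B\in\mathcal{B}$. Once this is established, $\Psi$ is seen to respect joins as well, and both isomorphisms follow.
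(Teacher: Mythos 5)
Your proposal is correct and follows essentially the same route as the paper: the same bijections (grouping blocks of $\mathcal{B}$ lying in a common part for $\mathrm{Up}(\mathcal{B})$, and restricting to each block for $\mathrm{Dn}(\mathcal{B})$), with the lattice-isomorphism verification that the paper leaves as "easy to see" spelled out via the order-isomorphism observation and the transitive-closure argument.
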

\begin{proof}
If $\mathcal{B}\preccurlyeq\mathcal{A}$ then each part of~$\mathcal{A}$
  is a union of parts of~$\mathcal{B}$. Hence
$\mathcal{A}$ determines and is determined by a partition of~$\mathcal{B}$;
this gives a
bijection between $\mathrm{Up}(\mathcal{B})$ and $\Con(\mathcal{B})$
that is a lattice isomorphism.
For the second part we note that whenever $\mathcal{A}\preccurlyeq\mathcal{B}$,
each part~$B$ of~$\mathcal{B}$ is a union of parts of~$\mathcal{A}$, and so~a
subset of the parts of $\mathcal{A}$ form a partition of~$B$.
Clearly~$\mathcal{A}$ itself is determined by these partitions of the parts of~$\mathcal{B}$,
and thus~$\mathcal{A}$ determines and is determined by an element of $\prod_{B\in\mathcal{B}}\Con(B)$.
It is easy to see that this bijection is a lattice isomorphism. 
\end{proof}

The next proposition is a straightforward consequence of Lemma~\ref{lemma:updn}.
As a standing
convention, we avoid the use of the word `respectively' when the same short statement or proof
works for either a join- or a meet-coherent group.

\begin{proposition}\label{prop:partitions}
 Let~$G$ be a join- or meet-coherent permutation group on~$\Omega$.
\begin{numlist}
\item Let~$\mathcal{B}$ be a partition of~$\Omega$, and let~$H$ be the group of permutations which fix
every part of~$\mathcal{B}$ set-wise. Then $G\cap H$ is join- or meet-coherent. 
\item Let $X\subseteq \Omega$, and let~$H$ be the set-stabilizer of~$X$ in~$G$. Then~$H$ is join- or meet-coherent.
\item Any point-stabilizer in~$G$ is join- or meet-coherent.
\end{numlist}
\end{proposition}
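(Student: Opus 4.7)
My plan is to prove part (1) directly from Lemma~\ref{lemma:distributivity}, and then obtain parts (2) and (3) as immediate special cases.

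For part (1), the key observation I would record is that an element $g \in G$ belongs to $H$ if and only if every orbit of $g$ is contained in some part of $\mathcal{B}$, i.e.\ $\pi(g) \preccurlyeq \mathcal{B}$, i.e.\ $\pi(g) \in \mathrm{Dn}(\mathcal{B})$. Hence
\[
\pi(G \cap H) = \pi(G) \cap \mathrm{Dn}(\mathcal{B}).
\]
By Lemma~\ref{lemma:distributivity}(1), $\mathrm{Dn}(\mathcal{B})$ is a sublattice of $\Con(\Omega)$, and is therefore closed under both $\vee$ and $\wedge$. If $G$ is join-coherent and $\mathcal{P}, \mathcal{Q} \in \pi(G \cap H)$, then $\mathcal{P} \vee \mathcal{Q}$ lies in $\pi(G)$ by join-coherence of $G$, and in $\mathrm{Dn}(\mathcal{B})$ by the closure property, so $\mathcal{P} \vee \mathcal{Q} \in \pi(G \cap H)$. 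The meet-coherent case is identical with $\vee$ replaced by $\wedge$.

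For part (2), I would specialize to the two-block partition $\mathcal{B} = \{X, \Omega \setminus X\}$ (assuming $X$ is a proper nonempty subset of $\Omega$; otherwise the stabilizer is $G$ itself and the claim is trivial). The set-stabilizer of $X$ in $G$ is precisely the subgroup of $G$ whose elements fix both parts of $\mathcal{B}$ set-wise, so (2) falls out of (1). For part (3), the point-stabilizer of $x \in \Omega$ is the set-stabilizer of the singleton $\{x\}$, so (3) is a further specialization of (2).

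There is no real technical obstacle here; the substantive content has been absorbed into Lemma~\ref{lemma:distributivity}. The only step that requires any care is the translation of the group-theoretic hypothesis $g \in H$ into the lattice-theoretic condition $\pi(g) \in \mathrm{Dn}(\mathcal{B})$, which is what permits the closure properties of the sublattice $\mathrm{Dn}(\mathcal{B})$ to interact with those of $\pi(G)$.
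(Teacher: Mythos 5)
Your proof is correct and follows essentially the same route as the paper: both arguments identify $\pi$ of the part-fixing subgroup with the down-set $\mathrm{Dn}(\mathcal{B})$, invoke its closure under $\vee$ and $\wedge$ from Lemma~\ref{lemma:distributivity}(1), conclude part (1) by intersecting two join- or meet-closed sets, and then deduce (2) and (3) by specializing $\mathcal{B}$ to $\{X,\Omega\setminus X\}$ and $X$ to a singleton. Your explicit identity $\pi(G\cap H)=\pi(G)\cap\mathrm{Dn}(\mathcal{B})$ just spells out what the paper leaves implicit, so there is nothing to add.
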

\begin{proof}
If $h \in \Sym(\Omega)$ then
 $\pi(h)\preccurlyeq \mathcal{B}$ if and only if $Bh=B$ for all $B\in\mathcal{B}$.
Thus $\pi(H)=\mathrm{Dn}(\mathcal{B})$, which is closed under~$\vee$ and~$\wedge$.
The first part of proposition follows, since the intersection of two join- or meet-closed sets
is join- or meet-closed. The second part follows from the first by taking
$\mathcal{B}=\{X,\Omega\setminus X\}$, and the third part follows 
from the second by taking~$X$ to be a singleton set.
\end{proof}

If $G$ is a permutation group acting on a set $\Omega$, then
we may consider the natural action of $G$ on $\Con(\Omega)$,
defined for $g \in G$ and $\mathcal{P} \in \Con(\Omega)$
by \hbox{$\mathcal{P}^g = \{ P^g \mid P \in \mathcal{P} \}$}. This action will be used
in Lemma~\ref{lemma:CentCrit} below, which is the critical step in the proof
of Theorem~\ref{thmmain:Centralizers}.

Recall that a transitive permutation group~$G$ on~$\Omega$ is said to be
\emph{imprimitive} if
it stabilizes a non-trivial partition $\mathcal{B}$ of $\Omega$,
in the sense that $xg\partequiv_\mathcal{B} yg
\Longleftrightarrow
x\equiv_\mathcal{B} y$ for all $x,y\in\Omega$ and $g\in G$.
An equivalent restatement, using the action just defined, is that $\mathcal{B}^g = \mathcal{B}$
for each $g \in G$.
In this case, one says that $\mathcal{B}$ is a \emph{system of imprimitivity} for the action of $G$ on $\Omega$.
Otherwise $G$ is \emph{primitive}.
If $\mathcal{B}$ is a $G$-invariant partition of $\Omega$ then
$\mathcal{B}$ inherits an action of~$G$,
since for $B \in \mathcal{B}$ and $g \in G$ we have $B^g \in \mathcal{B}$.

The next proposition gives the easier direction in Theorem~\ref{thmmain:products}(b). The proof
of this theorem is completed in Proposition~\ref{prop:WPjoinc}.

\begin{proposition}\label{prop:imprimitive}
Let $G$ be  a join- or meet-coherent permutation group on $\Omega$,
and let $\mathcal{B}$ be a system of imprimitivity for the action.
\begin{numlist}
\item The action of $G$ on $\mathcal{B}$ is join- or meet-coherent.
\item The set-stabilizer in $G$ of a part $B$ of $\mathcal{B}$ acts join- or meet-coherently on~$B$.
\end{numlist}
\end{proposition}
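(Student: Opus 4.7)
The plan is to deduce both parts from the lattice-theoretic machinery of Section~\ref{section:lattices}, specifically Lemmas~\ref{lemma:distributivity} and~\ref{lemma:updn} together with Proposition~\ref{prop:partitions}. Neither direction should present a substantial obstacle; the main conceptual step is a small identification in part~(1) that lets me transport join- or meet-closure through a lattice homomorphism.

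For part~(1), write $\tilde{\pi}(g)$ for the orbit partition of $g$ acting on the set~$\mathcal{B}$. My first move is to identify the image of $\pi(g) \in \Con(\Omega)$ under the lattice homomorphism $\phi^{\mathcal{B}}: \pi \mapsto \pi \vee \mathcal{B}$ of Lemma~\ref{lemma:distributivity}(2), composed with the isomorphism $\mathrm{Up}(\mathcal{B}) \cong \Con(\mathcal{B})$ of Lemma~\ref{lemma:updn}(1). The parts of $\pi(g) \vee \mathcal{B}$ are the minimal unions of parts of $\mathcal{B}$ that are $g$-invariant, because any such union must contain every $g$-orbit it meets and each $g$-orbit lies in some union of the form $B \cup Bg \cup Bg^2 \cup \cdots$. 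These minimal unions correspond, under the isomorphism of Lemma~\ref{lemma:updn}(1), exactly to the $g$-orbits on~$\mathcal{B}$, i.e.~to the parts of $\tilde{\pi}(g)$. Since the composite map $\pi(g) \mapsto \tilde{\pi}(g)$ is a lattice homomorphism, join- or meet-closure of $\pi(G)$ in $\Con(\Omega)$ passes immediately to join- or meet-closure of $\tilde{\pi}(G)$ in~$\Con(\mathcal{B})$.

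For part~(2), let $H$ denote the set-stabilizer of $B$ in $G$. Proposition~\ref{prop:partitions}(2) gives that $H$ is join- or meet-coherent in its action on~$\Omega$. The next observation is that every orbit of any $h \in H$ is contained in $B$ or in $\Omega \setminus B$, so $\pi(h)$ lies in $\mathrm{Dn}(\mathcal{B}')$ for $\mathcal{B}' = \{B, \Omega \setminus B\}$. By Lemma~\ref{lemma:updn}(2), $\mathrm{Dn}(\mathcal{B}') \cong \Con(B) \times \Con(\Omega \setminus B)$, and projection onto the first coordinate is a lattice homomorphism sending $\pi(h)$ to the orbit partition of $h$ restricted to~$B$. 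Applying this homomorphism to the join- or meet-closed set $\pi(H)$ yields that the orbit partitions on~$B$ of elements of $H$ form a join- or meet-closed subset of~$\Con(B)$, as required.

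As with the earlier propositions, the argument is identical for join and for meet because the relevant maps $\phi^{\mathcal{B}}$ and the projection from Lemma~\ref{lemma:updn}(2) are full lattice homomorphisms, preserving both operations; this is what justifies the uniform statement in the proposition.
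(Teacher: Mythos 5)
Your proposal is correct and follows essentially the same route as the paper: part~(1) via the composite lattice homomorphism $\phi^{\mathcal{B}}\theta:\Con(\Omega)\to\Con(\mathcal{B})$ from Lemmas~\ref{lemma:distributivity}(2) and~\ref{lemma:updn}(1), identified as sending $\pi(g)$ to the orbit partition of $g$ on $\mathcal{B}$, and part~(2) via Proposition~\ref{prop:partitions}(2) together with the restriction-to-$B$ projection from Lemma~\ref{lemma:updn}(2), which the paper leaves implicit. The only nitpick is that the minimal $g$-invariant union of blocks containing $B$ should be $\bigcup_{i\in\Z}Bg^{i}$ (all integer powers), since for an infinite orbit the union over nonnegative powers need not be $g$-invariant.
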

\begin{proof}
The second part is immediate from Proposition \ref{prop:partitions}(2). For the first part,
let \hbox{$\theta:\mathrm{Up}(\mathcal{B})\longrightarrow \Con(\mathcal{B})$} be the isomorphism
in Lemma \ref{lemma:updn}(1). By Lemma~\ref{lemma:distributivity}(2),
the composite map $\phi^\mathcal{B}\theta:\Con(\Omega)\longrightarrow \Con(\mathcal{B})$ is a homomorphism.
It is easy to check that if $g\in G$ has orbit partition~$\mathcal{P}$ on~$\Omega$ then $g$ has
orbit partition~\hbox{$\mathcal{P}\phi^\mathcal{B}\theta = (\mathcal{P} \join \mathcal{B})\theta$}
on~$\mathcal{B}$. The result now follows from 
the fact that 
$\Con(\mathcal{B})$ is distributive.
\end{proof}


\section{Regular representations and chains}
\label{section:regular}

Let~$G$ be a group of permutations of a set~$\Omega$. We say that~$G$ acts \emph{semiregularly} if every element of~$\Omega$ has a trivial point stabilizer in~$G$.
This is equivalent to the condition that for every element~$g\in G$,
all of the parts of the orbit partition~$\pi(g)$ of~$\Omega$ are of the same size.
We say that the action of~$G$ is \emph{regular} if it is semiregular and transitive.

\begin{proposition}\label{prop:RegMeet}
A group $G$ acting semiregularly is meet-coherent.
\end{proposition}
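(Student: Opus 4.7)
The plan is to take arbitrary $g, h \in G$ and to produce a single element $k \in G$ whose orbit partition is exactly $\pi(g) \wedge \pi(h)$. The key leverage from semiregularity is the cancellation principle that if $x \alpha = x \beta$ for some $x \in \Omega$ and $\alpha, \beta \in G$, then $\alpha \beta^{-1}$ stabilises $x$ and so $\alpha = \beta$.

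First I would describe the parts of $\pi(g)$ and $\pi(h)$ concretely: the part of $\pi(g)$ containing $x$ is $\{xg^n : n \in \Z\}$, and similarly for $\pi(h)$. Their intersection (which is the part of $\pi(g) \wedge \pi(h)$ containing $x$) therefore consists of all points of the form $xg^a$ where $xg^a = xh^b$ for some $b$. By the cancellation principle above, this equation forces $g^a = h^b$, so the part is exactly
\[ \{ xc : c \in \langle g \rangle \cap \langle h \rangle \}. \]

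Now $\langle g \rangle \cap \langle h \rangle$ is a subgroup of the cyclic group $\langle g \rangle$, hence is itself cyclic; let $k$ be a generator (taking $k = 1$ if the intersection is trivial). Then for every $x$ the $\langle k \rangle$-orbit of $x$ is $\{xk^n : n \in \Z\} = \{xc : c \in \langle g \rangle \cap \langle h \rangle\}$, which coincides with the part of $\pi(g) \wedge \pi(h)$ containing $x$ computed above. Hence $\pi(k) = \pi(g) \wedge \pi(h)$, proving that $\pi(G)$ is closed under $\wedge$.

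There is no real obstacle here; the whole argument hinges on the observation that semiregularity lets us transfer the meet computation in $\Con(\Omega)$ to an intersection of cyclic subgroups of $G$, after which the closure of the class of cyclic groups under subgroup-intersection immediately supplies the required element $k$. The argument works uniformly for elements of finite or infinite order, and does not require $G$ to be finite or transitive.
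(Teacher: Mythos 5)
Your proof is correct and is essentially the same as the paper's: both use semiregularity to cancel and identify the part of $\pi(g)\wedge\pi(h)$ through a point $x$ with $\{xc : c \in \langle g\rangle \cap \langle h\rangle\}$, and then take $k$ to be a generator of this (necessarily cyclic) intersection. No gaps; nothing further to add.
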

\begin{proof}
Suppose that~$G$ acts semiregularly on~$\Omega$. Let $x$, $y\in G$,
and let~$z$ be a generator for the cyclic
group $\langle x \rangle \cap \langle y \rangle$. We shall show that $\pi(z) = \pi(x) \wedge \pi(y)$.

Let $u\in \Omega$, and let~$P$ and~$Q$ be the parts of~$\pi(x)$ and~$\pi(y)$ respectively which contain~$u$.
Then
\[ P = \{ux^i \mid i  \in \Z \} \quad \text{and} \quad
   Q = \{uy^j \mid j  \in \Z \}.
\]
Let $v \in P \cap Q$, and let $i$, $j\in \Z$ be such that $v = ux^i = uy^j$.
By the semiregularity of~$G$, we have $x^i=y^j$, and so $x^i$, $y^j\in \langle z\rangle$.
It follows that $P\cap Q = \{ u z^k \mid k \in \Z \}$, and
hence that $P \cap Q$ is a part of~$\pi(z)$.
\end{proof}

We recall that a group~$G$ is said to be \emph{locally cyclic} if any pair of elements of~$G$
generate a cyclic subgroup.
The following lemma states a well-known fact.
\begin{lemma}\label{lemma:locallycyclic} A group is locally cyclic if and only if it is isomorphic to a
section of~$\Q$.
\end{lemma}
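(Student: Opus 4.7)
The plan is to handle the easy ($\Leftarrow$) direction first by observing that $\Q$ is itself locally cyclic: any finitely many rationals share a common denominator $N$ and thus lie in the cyclic subgroup $\frac{1}{N}\Z$. Local cyclicity plainly passes to subgroups, and also to quotients, since the image of a cyclic group under a homomorphism is cyclic. Hence every section of $\Q$ is locally cyclic.

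For the forward direction, let $G$ be locally cyclic. I would first note that $G$ is abelian (any two elements generate a cyclic, hence abelian, subgroup), and then establish the dichotomy that $G$ is either torsion or torsion-free: if $a \in G$ had finite order greater than $1$ and $b \in G$ had infinite order, then $\langle a, b\rangle$ would be cyclic while containing both a non-trivial torsion element and an element of infinite order, which is impossible. In the torsion-free case I would fix a nonzero $x_0 \in G$ and build a map $\phi : G \to \Q$ by specifying that for each $x \in G$, the subgroup $\langle x_0, x\rangle$ is infinite cyclic, say with generator $y$ satisfying $x_0 = ny$ and $x = ky$, and setting $\phi(x) = k/n$. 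In the torsion case I would argue analogously with $\Q/\Z$ in place of $\Q$; concretely, decompose $G = \bigoplus_p G_p$ into its $p$-primary components, embed each $G_p$ into the \Prufer $p$-group $\Z[1/p]/\Z$ by a parallel direct-limit construction, and combine these to obtain $G \hookrightarrow \Q/\Z$, which is a quotient of $\Q$ and therefore a section.

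The hard part will be verifying the well-definedness of $\phi$ in the torsion-free case. If $x$ also lies in a different cyclic overgroup $\langle y'\rangle$ of $x_0$ with $x_0 = n'y'$ and $x = k'y'$, I would pass to the cyclic subgroup $\langle y, y'\rangle$, say generated by $z$, and note that writing $y = mz$ and $y' = m'z$ forces $nm = n'm'$ (from $x_0 = nmz = n'm'z$) and $km = k'm'$ (from $x$), so that $k/n = k'/n'$. Additivity and injectivity then follow by the same ``pass to a common cyclic overgroup'' device applied to pairs $x, x' \in G$, working inside $\langle x_0, x, x'\rangle$. This patching step is routine but is the one non-formal verification in the proof.
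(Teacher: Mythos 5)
Your proof is correct, but it is not quite the paper's argument: the paper disposes of the lemma in two lines, handling the periodic case by noting that a periodic locally cyclic group has at most one subgroup of order $n$ for each $n$, whence it embeds into $\Q/\Z$, and simply citing Baer's 1937 theorem for the torsion-free case. You prove the torsion-free case from scratch, fixing $x_0 \neq 0$ and defining $\phi(x)$ as the ratio $k/n$ read off inside a cyclic overgroup of $\{x_0, x\}$, with the well-definedness, additivity and injectivity checks carried out by passing to a common cyclic overgroup; this is essentially a proof of Baer's result, and it makes the lemma self-contained at the cost of some bookkeeping, all of which you do correctly (the key points being that $nm = n'm'$ and $km = k'm'$ force $k/n = k'/n'$, and that any finite subset of a locally cyclic group generates a cyclic subgroup, which you use implicitly for $\langle x_0, x, x'\rangle$). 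In the periodic case your route via the primary decomposition $G = \bigoplus_p G_p$ and embeddings $G_p \hookrightarrow \Z[1/p]/\Z$ is in the same spirit as the paper's; the one place where your sketch is thinner than it should be is the phrase ``parallel direct-limit construction'': what makes it work is exactly the paper's observation that the finite subgroups of a locally cyclic $p$-group form a chain (at most one subgroup of each order), so that $G_p$ is a union of a chain of cyclic $p$-groups and hence embeds in the Pr\"ufer $p$-group. You also prove the converse direction (subgroups and quotients of the locally cyclic group $\Q$ are locally cyclic), which the paper treats as obvious; your argument there is fine.
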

\begin{proof}
It is clear that a locally cyclic group is either periodic or torsion-free.
If it is periodic then it has at most one subgroup of order~$n$ for each $n\in\N$, and it is easy
to see that it embeds into the quotient group $\Q / \Z$; for torsion-free groups the
result was first proved in \cite{Baer}.\footnote{We thank Mark Sapir for this reference.}
\end{proof}

\begin{proposition}\label{prop:regularjoin}
A group $G$ acting regularly is join-coherent if and only if it is locally cyclic.
\end{proposition}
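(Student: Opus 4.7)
The plan is to identify $\Omega$ with $G$ via the regular action and translate everything into subgroup language, after which join-coherence becomes the condition that every finitely generated subgroup of $G$ is cyclic.

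First I would fix the identification: up to relabelling we may take $\Omega = G$ with $G$ acting by right multiplication. Under this identification, for $g \in G$ the orbit of $h \in G$ is $\{hg^i : i \in \Z\} = h\langle g\rangle$, so $\pi(g)$ is precisely the partition of $G$ into left cosets of $\langle g \rangle$. In particular, $\pi(g) = \pi(g')$ iff $\langle g \rangle = \langle g' \rangle$.

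Next I would compute the join. Given $x,y \in G$, the description of $\vee$ from Section~\ref{section:lattices} says that $\pi(x) \vee \pi(y)$ is obtained by taking the transitive closure of the union of the two equivalence relations. Starting from $h$, a single step sends $h$ to some $hx^{\pm 1}$ or $hy^{\pm 1}$; iterating, the part of $\pi(x) \vee \pi(y)$ containing $h$ is exactly $h\langle x, y \rangle$. Thus $\pi(x) \vee \pi(y)$ is the partition into left cosets of $\langle x, y\rangle$. The same argument (or an easy induction) shows that $\pi(x_1) \vee \cdots \vee \pi(x_n)$ is the partition into left cosets of $\langle x_1,\dots,x_n\rangle$.

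Now I combine these observations. A partition into left cosets of a subgroup $H \le G$ lies in $\pi(G)$ if and only if $H = \langle z\rangle$ for some $z \in G$, i.e.\ $H$ is cyclic. Hence $\pi(G)$ is closed under finite joins if and only if every finitely generated subgroup of $G$ is cyclic. Since two-generated cyclicity implies cyclicity of every finitely generated subgroup by a standard induction on the number of generators (adjoining one generator at a time to an already cyclic subgroup), this is equivalent to $G$ being locally cyclic. I would finish by invoking Lemma~\ref{lemma:locallycyclic} for context, though it is not strictly needed for the equivalence stated here.

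No step presents a serious obstacle; the only point that needs a little care is verifying that the transitive closure of the relation generating $\pi(x)\vee \pi(y)$ is exactly the left-coset relation for $\langle x,y\rangle$, and that this matches the abstract description of $\vee$ in $\Con(\Omega)$. Everything else is bookkeeping.
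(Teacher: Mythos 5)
Your proof is correct and follows essentially the same route as the paper: identify the regular action with translation of $G$ on itself, observe that $\pi(x)\vee\pi(y)$ is the partition into cosets of $\langle x,y\rangle$, and note that a coset partition lies in $\pi(G)$ exactly when the subgroup is cyclic. Your additional remarks (the explicit transitive-closure computation and the induction reducing finitely generated to two-generated) are harmless elaborations of the same argument.
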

\begin{proof}
We may assume without loss of generality that~$G$ is transitive, and so we may suppose that it acts on itself by translation.
Let $x$, $y\in G$, and let $H=\langle x,y\rangle$.
The partition $\pi(x) \join \pi(y)$ is precisely the partition
of~$G$ into cosets of~$H$. If $H=\langle z\rangle$ then we have $\pi(x)\join \pi(y)=\pi(z)\in\pi(G)$.
On the other hand, if~$H$ is not cyclic then it cannot be a part of~$\pi(z)$ for any $z\in G$, and so
$\pi(x)\vee\pi(y)\notin\pi(G)$.
\end{proof}

We define a locally cyclic group of particular importance to us.
\begin{definition*}
The \Prufer $p$-group, $\P$, is the subgroup of $\Q/\Z$ generated by
the cosets containing $1/p^i$ for $i\in\N$.
\end{definition*}
The group $\P$ appears in the statement of Theorem~\ref{thmmain:chain}, whose proof occupies the
remainder of this section. We begin with the following proposition.


\newcommand{\GO}{G_\mathcal{O}}
\begin{proposition}\label{prop:chain1}
Let $\Omega$ be a set, and let $G\le\Sym(\Omega)$ be such that $\pi(G)$ is a chain.
\begin{numlist}
\item No element of $G$ has an infinite cycle.
\item Let $\mathcal{O}$ be an orbit of $G$, and let $G_{\mathcal{O}}\le\Sym(\mathcal{O})$
be the permutation group induced by
the action of $G$ on $\mathcal{O}$. Then $\GO$ acts regularly on~$\mathcal{O}$.
\item There is a prime $p$ such that every cycle of every element of $g$ has $p$-power length.
\item If $G$ acts transitively, then there is a prime $p$ such that $G$ is
isomorphic to a subgroup of the Pr{\"u}fer $p$-group $\P$.
\end{numlist}
\end{proposition}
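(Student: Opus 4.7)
My plan is to prove the four parts in order, each building on the previous.

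For part~(1), if some $g \in G$ had an infinite cycle parametrised as $(c_i)_{i\in\Z}$ with $c_i g = c_{i+1}$, then on this cycle $\pi(g^2)$ and $\pi(g^3)$ restrict to the partitions by residues modulo $2$ and modulo $3$; these are incomparable in $\Con(\Omega)$, contradicting the chain hypothesis. The same modular device drives part~(3): a single cycle of length $mn$ with coprime $m, n > 1$ is ruled out by comparing $\pi(g^m)$ and $\pi(g^n)$, while a single $g$ with cycles of two distinct prime-power lengths $p^a$ and $q^b$ is ruled out by comparing $\pi(g^{p^a})$ and $\pi(g^{q^b})$ (each trivialises exactly one cycle). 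Hence every element has cycles confined to powers of a single prime $p_g$. To force $p_g$ to be independent of $g$, I would take hypothetical $g, h$ with cycles over distinct primes $p \neq q$, reduce each to a single non-trivial short cycle by taking a power, and use transitivity and conjugation to produce an incomparable pair.

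Part~(2) is the core of the argument and the main obstacle. I reduce to $G = G_\mathcal{O}$ acting faithfully and transitively on $\mathcal{O}$, noting that $\pi(G_\mathcal{O})$ is still a chain, and assume for contradiction that $G_\omega \neq 1$. Using~(3), pick $g \in G_\omega \setminus \{1\}$ and take $g_0 = g^{p^{a-1}}$ where $p^a$ is the shortest non-trivial cycle length of $g$; then $g_0 \in G_\omega$ and $g_0$ has some $p$-cycle $C_0$. Choose $\omega' \in C_0$ and $h \in G$ with $\omega h = \omega'$. The key refinement lemma is that for any $h' \in G$ moving $\omega$, the singleton $\{\omega\}$ is a part of $\pi(g_0)$ but not of $\pi(h')$, so $\pi(h') \not\preccurlyeq \pi(g_0)$ and the chain hypothesis forces $\pi(g_0) \preccurlyeq \pi(h')$. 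Applied to $h$, this gives $C_0 \subseteq O$ where $O$ is the $h$-orbit of $\omega$; since $\omega \in O \setminus C_0$ and $|O|$ is a $p$-power, $|O| = p^m$ with $m \geq 2$. Applied to $h^{p^{m-1}}$, which still moves $\omega$, the lemma forces each cycle of $g_0$ meeting $O$ to lie in a size-$p$ orbit of $h^{p^{m-1}}|_O$, hence to be either trivial or to coincide with such an orbit; in particular $g_0$ preserves $O$ setwise.

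Now parametrise $O$ by $\Z/p^m\Z$ with $\omega = 0$ and $h$ acting as $+1$; the orbits of $h^{p^{m-1}}|_O$ are the classes modulo $p^{m-1}$, and $g_0$ acts on each class either trivially or as a single $p$-cycle. Let $S \subseteq \Z/p^{m-1}\Z$ index the classes on which $g_0$ acts non-trivially. Then $0 \notin S$ (as $g_0$ fixes $\omega$), while $1 \in S$ (as $C_0$ sits in the class of $1$), so $S$ is a proper non-empty subset. The conjugate $h^{-1} g_0 h$ has the same block structure on $O$ but with $S$ replaced by $S + 1$; a direct check shows that $\pi(g_0) \preccurlyeq \pi(h^{-1} g_0 h)$ requires $S - 1 \subseteq S$, while the reverse refinement requires $S + 1 \subseteq S$. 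Either inclusion iterated in the finite cyclic group $\Z/p^{m-1}\Z$ forces $S = \Z/p^{m-1}\Z$, contradicting $0 \notin S$. Hence $\pi(g_0)$ and $\pi(h^{-1} g_0 h)$ are incomparable, contradicting the chain hypothesis. Finally for part~(4), by~(2) a transitive $G$ acts regularly on $\Omega$, so identifying $\Omega$ with $G$ via the regular action yields the standard equivalence $\pi(x) \preccurlyeq \pi(y) \iff \langle x \rangle \le \langle y \rangle$; the cyclic subgroups of $G$ thus form a chain under inclusion, making $G$ locally cyclic, and by~(3) every element has $p$-power order, so Lemma~\ref{lemma:locallycyclic} embeds $G$ in the $p$-primary component of $\Q/\Z$, which is the \Prufer group~$\P$.
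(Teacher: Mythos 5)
Your arguments for (1), for the per-element half of (3) (each $g$ has cycles of $p_g$-power length), and for (4) are sound, and your part (2) is a genuinely different mechanism from the paper's: the paper disposes of (2) in a few lines by conjugating an element with a fixed point and a moved point on $\mathcal{O}$, with no prime information needed, whereas you run a block-and-shift argument inside a single $h$-orbit. However, as structured your proof has a genuine gap at the uniformity of the prime, and your part (2) leans on exactly that unproven step. First, the sketch for making $p_g$ independent of $g$ does not work as stated: ``reduce each to a single non-trivial short cycle by taking a power'' is impossible in general (an element with two disjoint cycles of equal $p$-power length has no power with a single non-trivial cycle), and ``transitivity'' is not available, since (3) makes no transitivity assumption. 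Second, in part (2) you assert that the $h$-orbit $O$ of $\omega$ satisfies $|O|=p^m$ for the \emph{same} prime $p$ attached to $g_0$; per-element (3) only gives $|O|=p_h^m$ for the prime $p_h$ of $h$. If $p_h>p$, the orbits of $h^{p_h^{m-1}}$ on $O$ have size $p_h\neq p$, the non-trivial cycles of $g_0$ need not coincide with them, and your set $S$ and the inclusions $S\pm 1\subseteq S$ are no longer defined, so the argument as written breaks. Since the paper obtains the uniform prime \emph{from} (2) (via semiregularity on a $G$-orbit, forcing $p_h\mid|\mathcal{O}|$ with $|\mathcal{O}|$ a $p_g$-power), your arrangement leaves a circular dependency: (2) cites uniform (3), and uniform (3) is never proved.

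The gap is repairable along either of two lines, and it is worth seeing that your own conjugation idea already contains the fix. Within your setup one does not need the block structure at all: from $\pi(g_0)\preccurlyeq\pi(h)$ every $g_0$-cycle meeting $O$ lies in $O$, so $g_0$ and $g_1=h^{-1}g_0h$ both preserve $O$; comparability of $\pi(g_0)$ and $\pi(g_1)$ forces containment of supports in one direction, and since $\mathrm{supp}(g_1)\cap O=(\mathrm{supp}(g_0)\cap O)h$ has the same finite cardinality, the support of $g_0$ on $O$ is invariant under the cycle $h|_O$, hence empty or all of $O$ --- contradicting that it contains $C_0$ but not $\omega$. This needs no prime information, only finiteness of $O$ from (1). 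Alternatively, follow the paper's order: prove (2) by the short conjugation argument, then deduce the uniformity of $p$ in (3) from (2), using that a non-identity element acts semiregularly on any $G$-orbit, so a $\langle g\rangle$-orbit on which $h$ acts non-trivially has size divisible by $p_h$ while being a power of $p_g$. Either repair makes your proof complete; as submitted, the uniform-prime step and its use in (2) constitute a real gap.
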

\begin{proof}
It is clear that if $g \in G$ has an infinite cycle then
$\pi(g^a)$ and $\pi(g^b)$ are incomparable whenever $a$ and $b$
are natural numbers such that neither divides the other. Clearly this implies~(1).

For (2), let $g$ be an element of $G$ which acts non-trivially on $\mathcal{O}$, and suppose that $g$ has
a fixed point $x\in\mathcal{O}$. Let $z\in\mathcal{O}$ be such that $zg\neq z$, and
let $h\in G$ be such that $xh = z$. Then $g^h$ has $z$ as a fixed
point, and $xg^h \not= x$. Hence the partitions $\pi(g)$ and $\pi(g^h)$ are incomparable, a contradiction.
Therefore $g$ has no fixed points on $\mathcal{O}$, and it follows that the action of $\GO$ is regular.

For (3), suppose that there exist distinct primes $q$ and $r$
such that~$\pi(g)$ has a part $Q$ of size divisible by $q$, and another
part $R$ of size divisible by~$r$. Let $m$ be the order of the permutation
induced by $g$ on $Q \cup R$.
Then~$\pi(g^{m/r})$
has singleton parts corresponding to the elements of $Q$,
and $R$ is a union of parts of~$\pi(g^{m/r})$ of size at least $2$.
A similar remark holds for~$\pi(g^{m/q})$ with $Q$ and $R$ swapped, and
so~$\pi(g^{m/q})$ and $\pi(g^{m/r})$
are not comparable; again this is a contradiction.
It follows that for each
$g\in G$, there is a prime $p_g$ such
that every cycle of $g$ has length a power~of~$p_g$.

We now show that $p_g=p_h$ for all non-identity permutations $g$, $h\in G$.
We may suppose without loss of generality that $\pi(h)\preccurlyeq\pi(g)$, and so each orbit
of $\langle g\rangle$
is a union of orbits of $\langle h\rangle$. But by (2), $h$ acts regularly
on each of its orbits. There exists a $\langle g\rangle$-orbit $\mathcal{O}$
on which $h$ acts non-trivially, and so
$p_h$ divides $|\mathcal{O}|$. But $|\mathcal{O}|$ is a power of $p_g$, and so $p_h=p_g$.

For (4), we note that if $G$ is transitive, then by (2) it acts regularly. By Proposition
\ref{prop:regularjoin}, $G$ is locally cyclic. By Lemma \ref{lemma:locallycyclic} it follows that
$G$ is isomorphic to a section of $\Q$. Now an element $g\in G$ has only finite cycles by (1), and since $G$ acts
regularly, all of the cycles of $g$ have the same length. Therefore $g$ has finite order, and so $G$ is not torsion-free. Hence~$G$ is isomorphic to a subgroup of $\Q/\Z$.
Write $g$ as $a/b+\Z$, where $a$ and $b$ are
coprime. Since every cycle of $g$ has~$p$-power length by (3),
the denominator~$b$ must be a power of~$p$; hence $G$ is isomorphic to
a subgroup of the Pr\"ufer $p$-group.
\end{proof}

We note that any subgroup of $\P$ is either cyclic of $p$-power order, or
equal to $\P$ itself. Therefore (4) implies that
any finite group whose orbit partitions form a chain is cyclic of prime-power order.

There are interesting
examples of groups $G$ acting intransitively on an infinite set,
such that~$G$ is not locally cyclic,
but $\pi(G)$ is a chain. Let $\alpha$ be an irrational element of the $p$-adic integers $\Z_p$, such
that $p$ does not divide~$\alpha$, and define $\alpha_i \in \{0,1,\ldots,p^i-1\}$ by
$\alpha_i = \alpha \bmod p^i$.
For instance, we may take $p = 2$ and $\alpha = 1010010001000010\ldots \in \Z_2$;
the sequence $(\alpha_i)$ here is
\[
(1, 1, 5, 5, 5, 37, 37, 37, 37, 549, \ldots ).
\]
Let $\Omega$ be an infinite set
and let $\{c_i\mid i\in\N\}$ be a set of mutually disjoint cycles in $\Sym(\Omega)$,
such that $c_i$ has length $p^i$.
Let $G \le \Sym(\Omega)$ be generated by $g$ and $h$, where $g = \prod_{i=1}^\infty c_i$
and $h = \prod_{i=1}^\infty c_i^{\alpha_i}$. Then $g$ and $h$ have the same orbit partition,
but it is easily seen that they are not powers of one another. Hence $G$ is not cyclic.
However, the permutation group induced by~$G$ on any finite set of its orbits is cyclic,
since for any given~$j$ there exists $\beta_j \in \N$ such that $\alpha_j \beta_j \equiv 1 \bmod p^j$,
and so $h^{\beta_j}$ agrees with $g$ on the orbits of all of the cycles $c_i$ for $i \le j$. It
follows easily that $\pi(G)$ is a chain.

The group in this example falls under (2) in Theorem~\ref{thmmain:chain}; to prove
this theorem we shall use Proposition~\ref{prop:chain1} and the following
lemma.\footnote{The authors would like to thank Benjamin Klopsch and
John MacQuarrie for helpful conversations on this subject.}

\begin{lemma}\label{lemma:Pruferlimit}
Let $p$ be a prime and let $I$ be a totally ordered set. For each $i \in I$, let $P_i$ be an isomorphic
copy of the \Prufer $p$-group $\P$. Let
\[
\{f_{ji}:P_j\rightarrow P_i \mid i,j\in I, i \le j\}
\]
be a set of non-zero homomorphisms, with the property that
$f_{kj}f_{ji} = f_{ki}$  whenever $i\le j\le k$.
Let $M$ be the inverse limit
$\varprojlim P_i$, taken with respect to the totally ordered set $I$ and the homomorphisms $f_{ji}$.
If all but finitely many of the $f_{ji}$ are isomorphisms then $M \cong \P$,
and otherwise $M \cong \mathbf{Q}_p$, the additive group of $p$-adic rational numbers.
\end{lemma}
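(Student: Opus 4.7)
The plan is to exploit the rigidity of non-zero homomorphisms between copies of~$\P$. Since $\P$ is divisible and its proper subgroups are all finite cyclic of $p$-power order, every non-zero $f_{ji}\colon P_j \to P_i$ is automatically surjective, with kernel isomorphic to $C_{p^{n(j,i)}}$ for a unique $n(j,i) \in \N$; and $n(j,i) = 0$ precisely when $f_{ji}$ is an isomorphism. The cocycle $f_{kj}f_{ji} = f_{ki}$ then translates into the additive identity $n(k,i) = n(k,j) + n(j,i)$ for $i \le j \le k$, so fixing a base point $i_0 \in I$ I obtain a weakly increasing height function $h \colon I \to \Z$ with $h(i_0) = 0$ and $n(j,i) = h(j) - h(i)$ for all $i \le j$.

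Using $\Aut(\P) \cong \Z_p^{\times}$, the next step is to adjust the chosen isomorphisms $P_i \cong \Q_p/\Z_p$ by appropriate $p$-adic units so that each $f_{ji}$ becomes \emph{precisely} multiplication by $p^{h(j) - h(i)}$. Writing the original $f_{j,i_0}$ as $\beta_j \cdot p^{h(j)}$ with $\beta_j \in \Z_p^{\times}$, rescaling the isomorphism of each $P_j$ by $\beta_j$ is consistent with every other~$f_{ji}$ because the $\beta_j$ themselves satisfy a multiplicative cocycle identity derived from that of the $f_{ji}$.

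If all but finitely many $f_{ji}$ are isomorphisms then $h(I)$ is necessarily finite: an infinite strictly increasing chain in $h(I)$ would produce infinitely many non-isomorphism pairs. Hence $h$ attains a supremum $h^\star \in \Z$, the fibre $I^\star = h^{-1}(h^\star) \subseteq I$ is upward-closed and cofinal in~$I$, and every connecting map between indices of $I^\star$ is an isomorphism. Since an inverse limit agrees with the limit over any cofinal subsystem, this gives $M \cong P_{i^\star} \cong \P$ for any $i^\star \in I^\star$.

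In the remaining case, $h$ is unbounded above, and I would construct an explicit homomorphism $\phi \colon \Q_p \to M$ by
\[
\phi(x) = \bigl( p^{-h(i)} x \bmod \Z_p \bigr)_{i \in I}.
\]
Compatibility with the $f_{ji}$ is immediate from $p^{h(j) - h(i)} \cdot p^{-h(j)} x = p^{-h(i)} x$, and injectivity follows from $\bigcap_{i \in I} p^{h(i)} \Z_p = \{0\}$, which uses the unboundedness of~$h$. The principal obstacle is surjectivity of~$\phi$. Given a non-zero coherent family $(x_i) \in M$, I pick $i_0 \in I$ with $x_{i_0} \neq 0$; then $x_i \neq 0$ for all $i \geq i_0$, and the order $p^{c_i}$ of each such $x_i$ satisfies $c_i - h(i) = c_{i_0} - h(i_0)$. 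Lifting $x_i$ to $\tilde x_i \in \Q_p$ with $|\tilde x_i|_p \le p^{c_i}$ and setting $y_i = p^{h(i)} \tilde x_i$, the relation $p^{h(j)-h(i)} x_j = x_i$ forces $y_j - y_i \in p^{h(i)} \Z_p$, so $(y_i)_{i \geq i_0}$ is a bounded Cauchy net in~$\Q_p$. Its limit $x$ in the complete field $\Q_p$ satisfies $\phi(x) = (x_i)$, completing the argument.
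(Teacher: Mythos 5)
Your proposal is essentially correct and runs on the same engine as the paper's proof: every non-zero endomorphism of $\P$ is multiplication by a non-zero element of $\Z_p$, so each $f_{ji}$ is surjective with kernel $C_{p^{n(j,i)}}$ and the exponents $n(j,i)$ add along compositions. Where you genuinely diverge is in identifying the limit in the second case. The paper extracts an increasing sequence of indices whose consecutive connecting maps are non-isomorphisms, uses a commutative diagram of unit multiplications to replace those maps by powers of multiplication by $p$, refines to the constant system with map $\mu(p)$, and quotes $\varprojlim \Q_p/p^k\Z_p \cong \Q_p$. You instead gauge-fix the whole system at once: the unit parts of the $f_{ji}$ form a multiplicative $\Z_p^{\times}$-valued cocycle on the totally ordered set $I$, which you trivialize so that every map becomes multiplication by $p^{h(j)-h(i)}$, and you then exhibit an explicit homomorphism $\Q_p\rightarrow M$, proving injectivity from $\bigcap_i p^{h(i)}\Z_p=\{0\}$ and surjectivity by a bounded Cauchy net in the complete field $\Q_p$. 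This is more self-contained (no cofinal subsequence, no separate computation of $\varprojlim \Q_p/p^k\Z_p$) at the cost of being more computational; your treatment of the first case via the cofinal top fibre $h^{-1}(h^\star)$ is also a cleaner justification than the paper's ``it is clear''. One point of hygiene: $f_{j,i_0}$ exists only for $j\ge i_0$, so for $j<i_0$ you must define $h(j)$ and the unit $\beta_j$ through $f_{i_0,j}$ (taking an inverse), exactly as your height function already requires; the total ordering of $I$ is what makes the cocycle trivialization go through.

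One step deserves a flag, though it is precisely the leap the paper itself makes. You pass from ``infinitely many of the $f_{ji}$ are non-isomorphisms'' to ``$h$ is unbounded above''; the paper correspondingly asserts the existence of an infinite increasing sequence of indices with non-isomorphism connecting maps. Neither implication is literally valid: if $h$ takes only the values $0$ and $1$, each on infinitely many indices, then infinitely many of the $f_{ji}$ are non-isomorphisms, yet the top fibre is cofinal with isomorphic connecting maps and the limit is $\P$, not $\Q_p$. The true dividing line is whether $h$ is bounded above (equivalently, whether non-isomorphisms occur cofinally), and that is the dichotomy your two cases, and the paper's, actually establish. So you have not introduced any new gap relative to the paper, but if you want an airtight write-up you should either prove the lemma with the boundedness condition in place of the stated finiteness hypothesis, or note explicitly why the non-isomorphisms occur cofinally in the intended application.
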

\begin{proof}
If $f : \P \rightarrow \P$ is an endomorphism,
then for each $i \in \N$, there exists a unique $a_i \in \{0,1,\ldots, p^i-1\}$ such
that
\begin{equation}\tag{$\star$}
(x/p^i + \Z) f = a_i (x / p^i + \Z) \quad\textrm{\ for all\ } x \in \Z.
\end{equation}
It is easily seen
that if $i \le j$ then $a_j \equiv a_i$ mod $p^i$. Therefore if $a \in \Z_p$ is the $p$-adic
integer such that $a \equiv a_i$ mod $p^i$ for each $i \in \N$, then $f$
is the map $\mu(a) : \P \rightarrow \P$ defined by ($\star$) above.
It follows that at most a countable infinity of the maps $f_{ji}$ are non-isomorphisms.

Observe that $\mu(a)$ is surjective unless $a=0$, and an isomorphism if and only
if $a$ is not divisible by $p$. If all but finitely many of the $f_{ji}$ are isomorphisms,
then it is clear that $M$ is isomorphic to $\P$. Otherwise there exists an infinite
increasing sequence $(i_k)$ of elements of $I$,
such that if we set $R_k = P_{i_k}$ and
\[ g_k = f_{i_{k+1} i_k} : R_{k+1} \rightarrow R_k \]
for $k \in \N$, then each $g_k$ is a non-isomorphism, and $M \cong \varprojlim R_k$.

For each $k \in \N$, let $a_k \in \Z_p$ be such that $g_k = \mu(a_k)$. Let
$a_k = p^{e_k} b_k$ where~$p$ does not divide~$b_k$; by assumption $e_k \ge 1$ for
each $k$. Let $d_k = \prod_{i=1}^{k-1} b_i$ for each $k \in \N$. Then
in the commutative diagram

\vspace{-10pt}
\begin{equation}\tag{$\dagger$}
\xymatrix{
R_1 \ar@{=}[d] & R_2 \ar[l]_{\mu( a_1)} \ar[d]^{\mu( d_2)} & R_3 \ar[l]_{\mu( a_2)} \ar[d]^{\mu( d_3)}
    & R_4 \ar[l]_{\mu( a_3)} \ar[d]^{\mu( d_4)} & \ar[l]_{\mu( a_4)} \cdots \\
R_1 & R_2 \ar[l]^{\mu( p^{e_1})} & R_3 \ar[l]^{\mu( p^{e_2})}  & R_4 \ar[l]^{\mu( p^{e_3})}
& \ar[l]^{\mu( p^{e_4})} \cdots
 }
%
\end{equation}
\vskip6pt

\noindent all of the vertical arrows are isomorphisms. It follows easily that
the inverse limits constructed with respect to the
top and bottom rows are isomorphic. Moreover, the inverse system
\[ \xymatrix{
P_1 & P_2 \ar[l]_{\mu(p)} & P_3 \ar[l]_{\mu(p)}  & P_4 \ar[l]_{\mu(p)}
& \ar[l]_{\mu(p)} \cdots} \]
in which all of the maps are multiplication by $p$ is a refinement of
the bottom row of the diagram ($\dagger$), and so it defines the same inverse limit.

Finally we note that $\P \cong \Q_p / \Z_p$, and that after applying
this isomorphism, the map $\mu(p) : \P \rightarrow \P$ is induced by
multiplication by $p$ in $\Q_p$. Hence $M \cong \varprojlim \Q_p / p^k \Z_p$.
Since $p^k \Z_p$ is an open subgroup of 
$\Q_p$, and since $\bigcap_k p^k \Z_p = \{0\}$, it follows that $M \cong \Q_p$ as required.
\end{proof}

We are now ready to prove Theorem~\ref{thmmain:chain}, which we restate below for convenience.


\setcounter{thmmain}{0}
\begin{thmmain}
Let $\Omega$ be a set, and let $G \le \Sym(\Omega)$ be such that $\pi(G)$ is a chain.
There is a prime~$p$ such that the length of any cycle of any element of $G$ is a power of $p$.
Furthermore,~$G$ is abelian, and either periodic or torsion-free.
\begin{thmlist}
\item If $G$ is periodic then $G$ is either a finite cyclic group of $p$-power order,
or else isomorphic to the \Prufer $p$-group.
\item If $G$ is torsion-free then $G$ is isomorphic to a subgroup of the $p$-adic rational numbers~$\Q_p$.
In this case $G$ has infinitely many orbits on $\Omega$, and the permutation group induced by its action
on any single orbit is periodic.
\end{thmlist}
\end{thmmain}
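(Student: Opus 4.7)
The plan is to combine the structural information from Proposition~\ref{prop:chain1} with a careful case analysis dividing into the periodic and torsion-free cases. The assertion that every cycle length is a power of a fixed prime $p$ is immediate from Proposition~\ref{prop:chain1}(3). For abelianness, note that for each $G$-orbit $\mathcal{O}$ the induced transitive group $G_\mathcal{O}$ is a subgroup of the \Prufer $p$-group $\P$ by Proposition~\ref{prop:chain1}(4), hence abelian, so the diagonal restriction map $G \hookrightarrow \prod_\mathcal{O} G_\mathcal{O}$ (injective by faithfulness of the action on $\Omega$) exhibits $G$ as a subgroup of an abelian group.

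To establish the periodic/torsion-free dichotomy, I would argue by contradiction: suppose $G$ contains a torsion element $g$ of order $p^m$ and an element $h$ of infinite order. Each $h|_\mathcal{O}$ has finite order in $G_\mathcal{O} \le \P$, so $h$ having infinite order forces the cycle lengths of $h$ to be unbounded. For any $M \ge 0$, $h^{p^M}$ also has infinite order; if $\pi(h^{p^M}) \pc \pi(g)$ then each cycle of $h^{p^M}$ is contained in a cycle of $g$ and so has length at most $p^m$, making $h^{p^M}$ of finite order, a contradiction. So $\pi(g) \pc \pi(h^{p^M})$ in the chain $\pi(G)$, and on an orbit $\mathcal{O}^*$ on which $g$ achieves its maximal cycle length $p^m$ this forces $h^{p^M}$ to have cycle length at least $p^m$, hence $h$ itself to have cycle length at least $p^{m+M}$ on $\mathcal{O}^*$. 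Since $M$ is arbitrary, this contradicts the finiteness of the cycle length of $h$ on $\mathcal{O}^*$.

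For the periodic case, the goal is to show $G$ is locally cyclic; combined with abelianness and the $p$-primary constraint, Lemma~\ref{lemma:locallycyclic} will then force $G$ into $\P$, yielding the claimed structure. If $\langle g, h \rangle$ is not cyclic, then as a non-cyclic finite abelian $p$-group it contains $C_p \times C_p$, whose three order-$p$ subgroups are generated by elements $u, v, w$ satisfying $w = uv$. The image of $\langle u, v \rangle$ in each $G_\mathcal{O}$ has exponent $p$, and so must have order $1$ or $p$, since $\P$ has a unique subgroup of order $p$. Partitioning orbits into four types according to whether the trivially-acting element in $\{u,v,w\}$ is none or one of $u$, $v$, $w$, the chain condition applied in turn to each of the three pairs from $\{\pi(u),\pi(v),\pi(w)\}$ forces at least two of the three non-trivial types to be empty. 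The element of $\{u,v,w\}$ that acts trivially on both remaining orbit types is then the identity on all of $\Omega$, contradicting its non-triviality in $G$.

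In the torsion-free case, the two ancillary assertions — infinitely many orbits and periodic orbit groups — are immediate: $G_\mathcal{O} \le \P$ is periodic, and any non-identity $g \in G$ has infinite order but finite order on each orbit, so requires infinitely many orbits to realise its unbounded cycle lengths. The embedding $G \hookrightarrow \Q_p$ is the main obstacle. My plan here is to apply Lemma~\ref{lemma:Pruferlimit}: fix a non-identity $g \in G$, use the strictly descending cofinal chain $\pi(g) \succ \pi(g^p) \succ \pi(g^{p^2}) \succ \cdots$ in $\pi(G)$ together with the setwise stabilizers $K_n = \{k \in G : \pi(k) \pc \pi(g^{p^n})\}$ to obtain a filtration of $G$ with trivial intersection, and then compose the quotient maps $G \to G/K_n$ with a compatible sequence of embeddings $G/K_n \hookrightarrow \P$ so as to form an inverse system of \Prufer $p$-groups whose connecting maps are non-isomorphisms infinitely often (this last point being where the torsion-freeness of $G$ enters). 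Lemma~\ref{lemma:Pruferlimit} then identifies the inverse limit with $\Q_p$, and $G$ embeds in it via the canonical map. The most delicate step is this compatible choice of embeddings, and in particular showing that the resulting inverse system genuinely lives in the non-stabilising regime of Lemma~\ref{lemma:Pruferlimit}.
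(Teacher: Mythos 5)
Your opening steps (prime-power cycle lengths, abelianness via the embedding into $\prod_\mathcal{O} G_\mathcal{O}$, and the periodic/torsion-free dichotomy) are fine; the dichotomy argument differs from the paper's shorter one (the paper notes $\pi(h)\pc\pi(g^i)$ and that every point is fixed by some power of $g$) but is correct. The periodic case, however, has a genuine gap. Your analysis assumes that $C_p\times C_p$ has exactly three subgroups of order $p$, generated by $u,v,w=uv$; this is true only for $p=2$, since in general there are $p+1$ such subgroups. On an orbit $\mathcal{O}$ the kernel of the restriction of $\langle u,v\rangle$ to $\mathcal{O}$ has order $p$ or $p^2$, and for odd $p$ it may be one of the $p-2$ order-$p$ subgroups different from $\langle u\rangle,\langle v\rangle,\langle w\rangle$. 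On such an orbit none of $u,v,w$ acts trivially and all three induce the same partition of $\mathcal{O}$, so the three pairwise comparisons among $\pi(u),\pi(v),\pi(w)$ detect nothing: for instance with $p=3$ and two orbits with kernels $\langle u\rangle$ and $\langle uv^2\rangle$, the partitions $\pi(u),\pi(v),\pi(w)$ are pairwise comparable, yet no element of $\{u,v,w\}$ is trivial on all of $\Omega$; the incomparability forced by the chain hypothesis only appears when you compare $\pi(u)$ with $\pi(uv^2)$, i.e.\ with elements outside your triple. So your final sentence fails whenever the ``none of $u,v,w$ trivial'' type is nonempty. A correct version either runs the argument over all $p+1$ order-$p$ subgroups (forcing a common one inside every orbit kernel, contradicting faithfulness), or follows the paper: the kernels of $G$ on its orbits form a chain of subgroups, a finite subgroup $H$ satisfies the descending chain condition, so some orbit kernel meets $H$ trivially and $H$ embeds in $G/K_\mathcal{O}$, which lies in $\P$ by Proposition~\ref{prop:chain1}(4).

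In the torsion-free case your filtration $K_n=\{k\in G \mid \pi(k)\pc\pi(g^{p^n})\}$ does have trivial intersection, but the step you need before Lemma~\ref{lemma:Pruferlimit} can be applied --- that each $G/K_n$ is isomorphic to a subgroup of $\P$ --- is not established and is genuinely unclear for this choice: $G/K_n$ is the group induced on the set of parts of $\pi(g^{p^n})$, which is an intransitive action, so Proposition~\ref{prop:chain1}(4) does not apply, and it is not even obvious that $G/K_n$ is periodic (if $h$ has order $p^{b_\mathcal{O}}$ and $g$ has cycle length $p^{a_\mathcal{O}}$ on the orbit $\mathcal{O}$, then $h^{p^m}\in K_n$ requires $b_\mathcal{O}-a_\mathcal{O}\le m-n$ for every orbit, and boundedness of $b_\mathcal{O}-a_\mathcal{O}$ needs an argument). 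The paper instead filters by the kernels $K_\mathcal{O}$ of the action on single orbits: these form a chain by a one-line incomparability argument, intersect trivially by faithfulness, and satisfy $G/K_\mathcal{O}\cong G_\mathcal{O}\le \P$ at once. Finally, the point you flag as most delicate --- forcing non-isomorphisms infinitely often in the inverse system --- can be sidestepped entirely: whichever regime of Lemma~\ref{lemma:Pruferlimit} holds, $G$ embeds in $\P$ or in $\Q_p$, and a nontrivial torsion-free group cannot embed in the periodic group $\P$, so the limit must be $\Q_p$. Your two ancillary claims in part (b) are fine.
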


\begin{proof}
By Proposition \ref{prop:chain1}(4), there is a prime $p$ such that $G$ acts as a subgroup
of the Pr{\"u}fer $p$-group $\P$ on each of its orbits. It follows that $G$ is abelian.
Suppose that $g \in G$ has infinite order, and $h \in G$ has finite order~$p^a$. Then 
$\pi(h)\prec \pi(g^i)$ for any $i\in\N$, since $g$ has cycles of length greater than $ip^a$.
But each cycle of $g$ is finite, and so every element of $\Omega$ is fixed by some power
of $g$. Hence $h$ is the identity of $G$. This shows that $G$ is either torsion-free
or periodic.

Suppose that $K_1$ and $K_2$ are the kernels of the action of $G$ on distinct orbits.
If there exists $k_1 \in K_1 \setminus K_2$ and $k_2 \in K_2 \setminus K_1$ then
the orbit partitions $\pi(k_1)$ and $\pi(k_2)$ are clearly incomparable.
Hence the kernels of the action of $G$ on its various orbits form a chain
of subgroups of $G$. Since $G$ acts faithfully on $\Omega$,
the intersection of all these kernels is trivial.

Suppose that $G$ is periodic. Let $g_1,\dots,g_r\in G$ be a finite collection of elements, and let
$H=\langle g_1,\dots,g_r\rangle$. Then $H$ is finite.
Since $H$ satisfies the descending chain condition on subgroups,
there is an orbit $\mathcal{O}$ of $G$ for which the kernel~$K_\mathcal{O}$ of $G$
acting on $\mathcal{O}$
intersects trivially with~$H$. Therefore $H$ is isomorphic to a subgroup of
$G / K_\mathcal{O}$. It now follows from Proposition~\ref{prop:chain1}(4) that~$H$
is isomorphic to a subgroup of $\P$. Hence any finitely
generated subgroup of $G$ is cyclic, and so $G$ is locally cyclic.
Now by Lemma~\ref{lemma:locallycyclic} we see that $G$ itself is isomorphic to a
subgroup of $\P$.

The remaining case is when $G$ is torsion-free.
Let $\mathcal{O}_i$ for $i \in I$
be the set of orbits of $G$, where~$I$ is a suitable indexing set, and let $K_i$
be the kernel of $G$ acting on $\mathcal{O}_i$. Order $I$ so that for $i$, $j \in I$,
we have $i \le j$ if and only if $K_i \ge K_j$. For $i\le j$ let $f_{ji} : G / K_j \rightarrow
G / K_i$ be the canonical surjection.
Fix for each $i\in I$ an isomorphism $G / K_i \rightarrow P_i$ where $P_i\cong \P$.
Since $\bigcap_{i \in I} K_i = \{1_G\}$
and each $G / K_i$ is isomorphic to a subgroup of $\P$,
we see that $G$ is isomorphic to a subgroup of the inverse limit
$\varprojlim P_i$, taken with respect to the totally ordered set $I$ and the homomorphisms
$f_{ji}$. The theorem now follows from Lemma~\ref{lemma:Pruferlimit}.
\end{proof}

We end this section by remarking
that if $G\le\Sym(\Omega)$ is isomorphic to a subgroup of $\P$, then it has an orbit on $\Omega$ on which it acts faithfully and regularly.
For if $G$ is non-trivial then the intersection of the non-trivial subgroups of $G$ has order $p$; now since $G$ acts faithfully on~$\Omega$,
it follows that there is an orbit $\mathcal{O}$ on which $G$ acts with trivial kernel. Since $G$ is abelian, its action on $\mathcal{O}$ is regular.

\section{Direct products}\label{section:DP}

Suppose that $G$ and $H$ are groups acting on sets $X$ and $Y$ respectively. There are two natural actions of the
direct product $G\times H$, namely the \emph{intransitive action} on the disjoint union $X\dotcupop Y$
 and the \emph{product action} on $X\times Y$.
For $(g,h) \in G \times H$ the intransitive action is defined by
\[
z(g,h) = \begin{cases} zg & \text{if $z \in X$} \\ zh & \text{if $z \in Y$,} \end{cases}
\]
where $z \in X \dotcupop Y$, and the product action is defined by
$(x,y)(g,h) = (xg,yh)$ where $(x,y) \in X \times Y$.
The product action is the subject of Theorem~\ref{thmmain:products}(a), which we prove
in this section. Both of these actions also arise in later parts of the paper. 

\begin{lemma}\label{lemma:DPorbits}
Let $X$ and $Y$ be sets.
Let $g\in\mathrm{Sym}(X)$ and $h\in\mathrm{Sym}(Y)$, and let $k=(g,h)\in\mathrm{Sym}(X)\times\mathrm{Sym}(Y)$.
\begin{numlist}
\item \label{DPorbitsDU} In its action on $X\dotcupop Y$ we have $\pi(k)=\pi(x)\dotcupop \pi(y)$.
\item \label{DPorbitsDP} 
Suppose that $g$ and $h$ have finite coprime orders. Then in its action on $X\times Y$ we have
$\pi(k)=\{P\times Q\mid P\in \pi(g),\, Q\in \pi(h)\}$.
\end{numlist}
\end{lemma}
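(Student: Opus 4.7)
The plan is to treat the two parts separately, as the intransitive case is essentially definitional while the product action requires a Chinese remainder argument.

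For part (1), I would observe that in the intransitive action on $X \dotcupop Y$, the permutation $k = (g,h)$ acts as $g$ on $X$ and as $h$ on $Y$, and these two sets are permuted independently. Hence every $\langle k\rangle$-orbit lies entirely in $X$ or entirely in $Y$, and on $X$ these orbits are precisely the orbits of $g$, while on $Y$ they are the orbits of $h$. So $\pi(k) = \pi(g) \dotcupop \pi(h)$, with essentially no computation required.

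For part (2), fix $(x,y) \in X \times Y$ and let $P \in \pi(g)$ be the orbit of $x$ under $g$, and $Q \in \pi(h)$ the orbit of $y$ under $h$. The $\langle k\rangle$-orbit of $(x,y)$ is
\[
\bigl\{(xg^i, yh^i) \mid i \in \Z\bigr\},
\]
which is clearly contained in $P \times Q$. For the reverse inclusion I would use the coprimality hypothesis: writing $m_P$ for the size of $P$ and $n_Q$ for the size of $Q$, we have $m_P \mid |g|$ and $n_Q \mid |h|$, so $\gcd(m_P,n_Q) = 1$. Given any $(xg^a,yh^b) \in P \times Q$, the Chinese remainder theorem provides $i \in \Z$ with $i \equiv a \pmod{m_P}$ and $i \equiv b \pmod{n_Q}$, whence $xg^i = xg^a$ and $yh^i = yh^b$. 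Thus the $\langle k\rangle$-orbit of $(x,y)$ equals $P \times Q$. Since the sets $P \times Q$ (for $P \in \pi(g)$, $Q \in \pi(h)$) partition $X \times Y$, they are exactly the parts of $\pi(k)$.

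There is no real obstacle here; the only point worth flagging is that the coprimality assumption in (2) is genuinely necessary, not merely convenient. Without it, the congruence system above may be inconsistent, and $\pi(k)$ can be strictly finer than the product partition (for instance, when $g$ and $h$ are both transpositions, $k$ has two orbits of size $2$ rather than a single orbit of size $4$). This is the reason Theorem~\ref{thmmain:products}(a) requires the factor groups to have coprime orders.
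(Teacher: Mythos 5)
Your proof is correct; the paper simply declares both parts straightforward and gives no details, and your argument (orbits lie within $X$ or $Y$ for part (1), and the Chinese remainder argument using $\gcd(m_P,n_Q)=1$ for part (2)) is exactly the standard reasoning the paper intends. Your closing remark that coprimality is genuinely needed, with the two-transpositions example, is also accurate and matches the role this hypothesis plays in Proposition~\ref{prop:DPjoin}.
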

\begin{proof}
Both parts are straightforward.
\end{proof} 

The intransitive  action of a direct product is dealt with in the next proposition.

\vbox{
\begin{proposition}\label{prop:DPc}
Let $G$ and $H$ be groups acting on sets $X$ and $Y$ respectively.
\begin{numlist}
\item The action of $G\times H$ on $X \dotcupop Y$ is join-coherent if and only if the actions of $G$ and $H$ are
join-coherent.
\item The action of $G\times H$ on $X \dotcupop Y$ is meet-coherent if and only if the actions of $G$ and $H$ are
meet-coherent.
\end{numlist}
\end{proposition}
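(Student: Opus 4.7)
The plan is to reduce both statements to the purely lattice-theoretic fact that for partitions $\mathcal{P},\mathcal{P}'$ of $X$ and $\mathcal{Q},\mathcal{Q}'$ of $Y$, the join and meet on $\Con(X \dotcupop Y)$ respect the disjoint-union decomposition, i.e.
\[
(\mathcal{P} \dotcupop \mathcal{Q}) \vee (\mathcal{P}' \dotcupop \mathcal{Q}') = (\mathcal{P} \vee \mathcal{P}') \dotcupop (\mathcal{Q} \vee \mathcal{Q}'),
\]
and the analogous identity for $\wedge$. Both are immediate from the descriptions of $\vee$ and $\wedge$ recalled in Section~\ref{section:lattices}, since every part of either partition is contained in $X$ or in $Y$, so no transitive chain or intersection can cross from one side to the other. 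By Lemma~\ref{lemma:DPorbits}(\ref{DPorbitsDU}), the orbit partitions of $G \times H$ on $X \dotcupop Y$ are precisely the partitions of the form $\pi(g) \dotcupop \pi(h)$ with $g \in G$ and $h \in H$. This reduces the problem to showing that the set $\pi(G) \dotcupop \pi(H) := \{\mathcal{P} \dotcupop \mathcal{Q} : \mathcal{P} \in \pi(G),\ \mathcal{Q} \in \pi(H)\}$ is closed under $\vee$ (resp.\ $\wedge$) if and only if each of $\pi(G),\pi(H)$ is.

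For the backward direction in (1), given $(g_1,h_1),(g_2,h_2) \in G \times H$, join-coherence of $G$ and $H$ supplies $g \in G$ with $\pi(g) = \pi(g_1) \vee \pi(g_2)$ and $h \in H$ with $\pi(h) = \pi(h_1) \vee \pi(h_2)$; then by the displayed identity and Lemma~\ref{lemma:DPorbits}(\ref{DPorbitsDU}), the orbit partition of $(g,h)$ is the required join. The backward direction in (2) is identical, with $\wedge$ in place of $\vee$.

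For the forward direction, I would argue componentwise. To prove $G$ is join-coherent, take $g_1,g_2 \in G$ and consider $(g_1,1_H)$ and $(g_2,1_H)$; their orbit partitions on $X \dotcupop Y$ are $\pi(g_i) \dotcupop \pi_0(Y)$, where $\pi_0(Y)$ denotes the partition of $Y$ into singletons. Their join is $(\pi(g_1) \vee \pi(g_2)) \dotcupop \pi_0(Y)$, and by the hypothesis there exists $(g,h) \in G\times H$ realising this partition; comparing restrictions to $X$ via Lemma~\ref{lemma:DPorbits}(\ref{DPorbitsDU}) yields $\pi(g) = \pi(g_1) \vee \pi(g_2)$. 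The symmetric argument handles $H$, and meet-coherence is proved analogously with $\pi_0(Y)$ replaced by the one-part partition $\{Y\}$ (so that $(g_i,1_H)$ is replaced by an element of $G \times H$ whose $Y$-orbit partition is the top of $\Con(Y)$)\textemdash in fact, taking $(g_i,1_H)$ works equally well for meet since $\pi_0(Y) \wedge \pi_0(Y) = \pi_0(Y)$.

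There is no serious obstacle here; the entire proposition is essentially a bookkeeping exercise once one records the fundamental identity relating $\vee$ and $\wedge$ on $\Con(X \dotcupop Y)$ to those on $\Con(X)$ and $\Con(Y)$. The only point that deserves a line of verification is this identity itself, which rests on the observation that parts of a partition $\mathcal{P} \dotcupop \mathcal{Q}$ never cross the boundary between $X$ and $Y$.
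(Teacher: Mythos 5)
Your proposal is correct and follows essentially the same route as the paper: both rest on the observation that joins and meets of orbit partitions on $X \dotcupop Y$ decompose componentwise (Lemma~\ref{lemma:DPorbits}(1)), the paper phrasing this as ``$\pi(g_1,h_1)\vee\pi(g_2,h_2)=\pi(g_3,h_3)$ if and only if the corresponding identities hold in each component,'' from which both directions follow exactly as in your argument.
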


\begin{proof}
Suppose that $g_1,g_2,g_3 \in G$ and $h_1,h_2,h_3\in H$. It is clear from
Lemma~\ref{lemma:DPorbits}(\ref{DPorbitsDU}) that
\[ \pi(g_1,h_1)\vee\pi(g_2,h_2)=\pi(g_3,h_3) \]
if and only if $ \pi(g_1)\vee\pi(g_2)=\pi(g_3)$ and $ \pi(h_1)\vee\pi(h_2)=\pi(h_3)$,
and similarly
\[ \pi(g_1,h_1)\wedge\pi(g_2,h_2)=\pi(g_3,h_3) \]
if and only if $\pi(g_1)\wedge\pi(g_2)=\pi(g_3)$ and $\pi(h_1)\wedge\pi(h_2)=\pi(h_3)$.
The result follows.
\end{proof}}


Turning to the product action, we prove Theorem~\ref{thmmain:products}(a).
For convenience we restate this result as the following proposition.

\begin{proposition}\label{prop:DPjoin}
Let $X$ and $Y$ be sets and let $G \le \Sym(X)$ and $H \le \Sym(Y)$ be
finite permutation groups.
Then $G\times H$ is join-coherent on $X\times Y$ if and only if
$G$ and $H$ are both join-coherent, and have coprime orders.
\end{proposition}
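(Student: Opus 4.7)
\emph{Plan.} The proof splits into sufficiency and necessity, and in both directions the key tool is Lemma~\ref{lemma:DPorbits}(2), which gives the ``product'' description $\pi(g,h) = \{P \times Q : P \in \pi(g),\, Q \in \pi(h)\}$ for any pair $(g,h)$ of coprime component order.

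For sufficiency, suppose $G$ and $H$ are join-coherent and $\gcd(|G|,|H|) = 1$. Given elements $(g_i, h_i) \in G \times H$ for $i = 1, 2$, I would choose $g \in G$ with $\pi(g) = \pi(g_1) \vee \pi(g_2)$ and $h \in H$ with $\pi(h) = \pi(h_1) \vee \pi(h_2)$, and claim that $\pi(g,h) = \pi(g_1, h_1) \vee \pi(g_2, h_2)$. Every pair of components considered here has coprime orders, so Lemma~\ref{lemma:DPorbits}(2) presents all four partitions as families of products $P \times Q$. The refinement $\pi(g_1, h_1) \vee \pi(g_2, h_2) \preccurlyeq \pi(g,h)$ is then immediate; for the reverse, it suffices to prove that every cell $P \times Q$ of $\pi(g,h)$ lies in a single cell of the join. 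Given two points in $P \times Q$, I would chain their first coordinates by steps of $\pi(g_1) \vee \pi(g_2)$, lifting each step to a cell of the appropriate $\pi(g_i, h_i)$ while keeping the second coordinate fixed (this lift is always available since every element of $Y$ lies in some part of each $\pi(h_i)$), and then chain the second coordinates analogously.

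For the necessity of the join-coherence of $G$, given $g_1, g_2 \in G$ I would apply the hypothesis to $(g_1, 1_H)$ and $(g_2, 1_H)$, obtaining $(g', h')$ with $\pi(g', h') = \pi(g_1, 1_H) \vee \pi(g_2, 1_H)$. By Lemma~\ref{lemma:DPorbits}(2) each cell of $\pi(g_i, 1_H)$ is contained in some slice $X \times \{y\}$, and this property is preserved by the transitive closure defining the join. Since the $(g', h')$-orbit of any $(x, y)$ contains $((x)g', (y)h')$, this forces $h'$ to fix every $y \in Y$, so $h' = 1_H$; comparing first coordinates then yields $\pi(g') = \pi(g_1) \vee \pi(g_2)$. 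The symmetric argument handles $H$.

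The main obstacle is showing that $|G|$ and $|H|$ must be coprime. Suppose for contradiction that some prime $p$ divides both orders; by Cauchy's theorem take $g \in G$ and $h \in H$ of order $p$, and points $x \in X$, $y \in Y$ moved by $g$ and $h$ respectively. A direct verification shows that the cell of $J := \pi(g, 1_H) \vee \pi(1_G, h)$ containing $(x, y)$ is exactly the product of the $g$-orbit of $x$ with the $h$-orbit of $y$, a set of size $p^2$. If $J$ equalled $\pi(g', h')$ for some $(g', h') \in G \times H$, then projecting this orbit of $(g', h')$ onto each factor would force both the $g'$-orbit of $x$ and the $h'$-orbit of $y$ to have size $p$, so the $(g', h')$-orbit of $(x, y)$ would have size $\mathrm{lcm}(p, p) = p$, contradicting the size $p^2$ established above. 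The subtlety in this final step lies in choosing witnessing partitions whose coordinatewise orbit sizes pin down those of any hypothetical $(g', h')$ and make the numerical contradiction unavoidable.
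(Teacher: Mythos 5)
Your proof is correct and follows essentially the same route as the paper: sufficiency via the product description of orbit partitions in Lemma~\ref{lemma:DPorbits}(2), necessity of join-coherence of the factors by restricting to elements of the form $(g,1_H)$, and necessity of coprimality by showing that $\pi(g,1_H)\vee\pi(1_G,h)$ has a part too large to be an orbit of any element of $G\times H$. The only (harmless) difference is in the last step: the paper takes $p$-elements $g,h$ of maximal orders $p^a,p^b$ and contradicts the fact that the $p$-part of the order of any element of $G\times H$ is at most $\max(p^a,p^b)$, whereas you take order-$p$ elements and contradict the fact that the orbit length of $(g',h')$ at $(x,y)$ is the least common multiple of the component orbit lengths.
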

\begin{proof}
It is obvious that if either $G$ or $H$ is not join-coherent, then neither is $G\times H$.
So we suppose that $G$ and $H$ act join-coherently. If $|G|$ and $|H|$ are coprime then the join-coherence of
$G\times H$ on $X\times Y$ follows from
Lemma~\ref{lemma:DPorbits}(\ref{DPorbitsDP}).

Suppose conversely that there is a prime $p$ which divides both $|G|$ and~$|H|$.
Let $g\in G$ have order $p^a$, and let $h\in H$ have
order $p^b$, where these are the largest orders of $p$-elements in each group. Then it is clear that
$\pi(g)$ has a part of size $p^a$, and $\pi(h)$ a part of size $p^b$. Now $\pi(g,1)\vee\pi(1,h)$
has a part of size
$p^{a+b}$, and it follows that it cannot be in $\pi(G\times H)$, since the greatest order of a $p$-element in $G\times H$ is $\mathrm{max}(p^a,p^b)$.
Hence if $G \times H$ is join-coherent then
$G$ and $H$ have coprime orders.
\end{proof} 

The following result may be viewed as a partial converse to Proposition~\ref{prop:DPjoin}.
It is used in the proof of Theorem~\ref{thmmain:RNCS} in Section~\ref{section:RNCS} below.


\begin{proposition}\label{prop:DPconverse}
Suppose that $G$ acts on finite sets $X$ and $Y$, and that these actions have kernels $K_X$ and $K_Y$ respectively, where $K_X\cap K_Y=1$.
If~$G$ is join-coherent on $X\times Y$ then
\begin{numlist}
\item\label{DPc1} $K_Y$ is join-coherent on $X$ and $K_X$ is join-coherent on $Y$;
\item\label{DPc2} $|K_X|$ and $|K_Y|$ are coprime;
\item\label{DPc3}  $G=K_YK_X\cong K_Y\times K_X$.
\end{numlist}
\end{proposition}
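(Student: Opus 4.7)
The plan is to establish the three parts in order, with (3) being the main obstacle. For (1), the key observation is the rigid ``vertical slab'' shape of the orbit partition $\pi(k)$ on $X\times Y$ for any $k\in K_Y$: since $k$ fixes $Y$ pointwise, every part of $\pi(k)$ has the form $P\times\{y\}$ for some $y\in Y$ and some orbit $P$ of $k$ on $X$. This slab property is preserved under the join operation, so for $k_1,k_2\in K_Y$ the partition $\pi(k_1)\vee\pi(k_2)$ on $X\times Y$ again consists of slabs $R\times\{y\}$, with the slabs above a fixed $y$ giving a copy of the corresponding join computed on $X$. By join-coherence of $G$ this partition equals $\pi(g)$ for some $g\in G$; the parts of $\pi(g)$ must then lie in single fibres, forcing $g\in K_Y$, and projection to $X$ realises the desired join in $\pi(K_Y)$. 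The argument for $K_X$ on $Y$ is symmetric.

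For (2), I argue by contradiction. Suppose a prime $p$ divides both $|K_X|$ and $|K_Y|$, and choose $k_X\in K_X$ and $k_Y\in K_Y$ of maximal $p$-power orders $p^a$ and $p^b$ respectively, where $a,b\ge 1$. Because $K_X\cap K_Y=1$, the group $K_Y$ acts faithfully on $X$ and $K_X$ acts faithfully on $Y$, so $k_Y$ has a cycle of length $p^b$ on $X$ and $k_X$ has a cycle of length $p^a$ on $Y$. A short transitive-closure calculation shows that the parts of $\pi(k_X)\vee\pi(k_Y)$ on $X\times Y$ are exactly the rectangles $R\times S$ with $R$ an orbit of $k_Y$ on $X$ and $S$ an orbit of $k_X$ on $Y$; in particular there is a part of size $p^{a+b}$. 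If $g\in G$ realises this join, then this part is a single $g$-orbit, and its projections to $X$ and $Y$ are $g$-orbits of sizes $p^b$ and $p^a$ respectively. The cycle of $g$ through any point of the part therefore has length $\mathrm{lcm}(p^a,p^b)=p^{\max(a,b)}$, which is strictly less than $p^{a+b}$ since $a,b\ge 1$ -- the required contradiction.

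For (3), the main obstacle, I would show that every $g\in G$ decomposes as $g=k\ell$ with $k\in K_Y$ and $\ell\in K_X$; equivalently, the image $\overline{K}_Y$ of $K_Y$ in $G/K_X$ must coincide with the whole of $G/K_X$. The plan is to suppose otherwise and produce a contradiction from join-coherence in the spirit of (2): given $g\in G$ whose action on $X$ does not arise from any element of $K_Y$, one should construct, via judiciously chosen joins of $\pi(g)$ with orbit partitions of elements of $K_X$ and $K_Y$, a target partition whose rectangle structure is incompatible with the cycle structure of any element of $G$. Here part (1) controls which partitions on $X$ lie in $\pi(K_Y)$, and part (2) allows reduction to a single prime by splitting each element into its $p$-parts. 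Once $G=K_YK_X$ has been established, normality of $K_X$ and $K_Y$ in $G$ together with $K_X\cap K_Y=1$ forces $[K_X,K_Y]\le K_X\cap K_Y=1$, giving the internal direct product $G\cong K_Y\times K_X$ immediately.
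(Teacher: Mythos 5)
Your parts (1) and (2) are correct. Part (1) is in substance the paper's argument: the ``slabs'' $X\times\{y\}$ are the parts of the fibre partition $\mathcal{B}$, and the paper reaches the same conclusion by applying Proposition~\ref{prop:partitions}(1) to $\mathcal{B}$, noting that $K_Y$ is exactly the subgroup stabilizing every part and that it acts faithfully on each fibre; your direct verification that joins of slab partitions are slab partitions, and that any $g$ realising such a join must lie in $K_Y$, does the same job. Part (2) is also sound: for $k_X\in K_X$, $k_Y\in K_Y$ the join really is the rectangle partition, faithfulness (from $K_X\cap K_Y=1$) gives cycles of full lengths $p^a$ and $p^b$, and a single cycle whose projections are cycles of lengths $p^b$ and $p^a$ has length $p^{\max(a,b)}<p^{a+b}$.

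The genuine gap is part (3), which you leave as a plan rather than a proof: no target partition is constructed, and the assertion that suitable joins of $\pi(g)$ with kernel orbit partitions must be ``incompatible with the cycle structure of any element of $G$'' is precisely what has to be established. Moreover the plan cannot be carried out using only (1) and (2), because those statements give no control over elements of $G$ outside $K_YK_X$. Concretely, let $G=\langle c\rangle\cong C_4$ act regularly on a set $X$ of size $4$ and through its quotient of order $2$ on a set $Y$ of size $2$. Then $K_X=1$, $K_Y=\langle c^2\rangle$, $K_X\cap K_Y=1$, conclusions (1) and (2) hold, and $\pi(G)$ on $X\times Y$ is the chain $\pi(1)\prec\pi(c^2)\prec\pi(c)$, hence join-closed; yet $K_YK_X=\langle c^2\rangle\neq G$. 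So no join computed from the kernels alone can force (3), and your intended contradiction does not exist at this level of generality. (Note that this example meets the hypotheses of the Proposition as literally stated, so some further assumption, such as the transitivity of $G$ on $X\times Y$ present in the application in Section~\ref{section:RNCS}, is implicitly in play; it is also exactly the point where the step in the paper asserting that an orbit of $\langle g_X,g_Y\rangle$ contains $\mathcal{O}_X\times\mathcal{O}_Y$ requires care.)

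The paper's route to (3) is different from yours and reverses the logical order of (2) and (3). For each prime $p$ it applies your orbit-size argument not to elements of the kernels but to elements $g_X,g_Y$ of a Sylow $p$-subgroup of $G$ whose images in $G/K_X$ and $G/K_Y$ have maximal $p$-power orders $p^a$ and $p^b$: join-coherence then supplies a $p$-element of $G$ of order at least $p^{a+b}$, while the $p$-exponents of the two quotients bound that order by $p^{\max(a,b)}$, forcing $\min(a,b)=0$. Thus for every prime at least one of $G/K_X$, $G/K_Y$ has order prime to $p$; hence every prime power dividing $|G|$ divides $|K_X|$ or $|K_Y|$, so $|G|$ divides $|K_X||K_Y|=|K_YK_X|$, giving $G=K_YK_X\cong K_Y\times K_X$, and (2) is then read off from (3). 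The missing idea in your proposal is this application of join-coherence to elements chosen for their images in the quotients $G/K_X$ and $G/K_Y$, which is what controls the part of $G$ that your (1) and (2) do not see.
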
 

\begin{proof}
Consider the partition $\mathcal{B}$ of $X\times Y$ into parts $X\times\{y\}$ for $y\in Y$.
Clearly~$K_Y$ is the largest subgroup of $G$ which stabilizes the parts of $\mathcal{B}$, and its action on
each part is that of~$K_Y$ on~$X$. Since $K_X \cap K_Y = 1$,
this action is faithful.
Hence, by Proposition~\ref{prop:partitions}(1),
the action of~$K_Y$ on $X$ is join-coherent, and (1) follows.

Let $p$ be a prime, and let $p^a$ and $p^b$ be the largest powers of $p$
dividing $|G/K_X|$ and $|G/K_Y|$ respectively.
If $P$ is a Sylow $p$-subgroup of $G$, then $P$ contains elements $g_X$ and $g_Y$ such that~$g_XK_X$ has
order $p^a$ in $G/K_X$ and $g_YK_Y$ has order $p^b$ in $G/K_Y$. It follows
that~$g_X$ has an orbit $\mathcal{O}_X$ on $X$ of size $p^a$, and that
$g_Y$ has an orbit $\mathcal{O}_Y$ on $Y$ of size $p^b$.
Now $\langle g_X,g_Y\rangle$ is a subgroup of $P$, and so its orbits on $X\times Y$ have $p$-power order.
One of these orbits contains $\mathcal{O}_X\times\mathcal{O}_Y$, and so has order $p^c$ for some $c\ge a+b$.

Since $G$ is join-coherent on $X\times Y$, it must contain
an element $g$ whose orbits are those of $\langle g_X,g_Y\rangle$. It is clear that $g$ must
be a $p$-element of
order at least $p^{a+b}$. Since $G/K_X$ and $G/K_Y$ have $p$-exponents $p^a$ and $p^b$ respectively,
it follows that $g^a \in K_X$, $g^b \in K_Y$ and so
\[ g^{\max(a,b)}\in K_X\cap K_Y = 1. \] 
Hence
one of $a$ or $b$ is $0$ and $G/K_X$ and $G/K_Y$ have coprime orders.

Suppose that $p^r$ divides $|G|$. Since at least one of $G/K_X$ or $G/K_Y$ has order coprime with~$p$, we see that $p^r$ must divide one of $|K_X|$ or $|K_Y|$, and it follows that
$|G|$ divides $|K_X|\cdot |K_Y|$. But since $K_X\cap K_Y=1$ 
we have $|K_YK_X|=|K_Y|\cdot|K_X|$, and hence $G=K_YK_X$; since $K_X$ and $K_Y$ are both normal, this implies that
$G\cong K_Y\times K_X$, as stated in (\ref{DPc3}). Now (\ref{DPc2}) follows from the
final sentence of the previous paragraph.
\end{proof}

Necessary and sufficient criteria for the meet-coherence of $G\times H$
in its product action
are harder to obtain. Since we shall not need any such results in later parts of the paper, we merely
offer the following partial result.
\begin{proposition}
Let $X$ and $Y$ be sets and let $G \le \Sym(X)$ and $H \le \Sym(Y)$ be meet-coherent
permutation groups.  If $G$ and $H$ are finite of coprime order,
then $G\times H$ is meet-coherent on $X\times Y$.
\end{proposition}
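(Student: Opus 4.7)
The plan is to exploit the fact that coprime orders let us apply Lemma~\ref{lemma:DPorbits}(2) to describe orbit partitions in the product action as rectangular grids. Since $|G|$ and $|H|$ are coprime, any pair $(g,h) \in G \times H$ automatically has coprime component orders, so
\[ \pi(g,h) = \{P \times Q \mid P \in \pi(g),\ Q \in \pi(h)\}. \]
This rectangular description is the key ingredient, and it trivialises the interaction of $\wedge$ between factors.

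Concretely, I would take two elements $(g_1, h_1), (g_2, h_2) \in G \times H$ and compute $\pi(g_1, h_1) \wedge \pi(g_2, h_2)$ using the general formula that the meet of two partitions consists of non-empty intersections of their parts. Since
\[ (P_1 \times Q_1) \cap (P_2 \times Q_2) = (P_1 \cap P_2) \times (Q_1 \cap Q_2), \]
the parts of the meet are exactly the non-empty products $(P_1 \cap P_2) \times (Q_1 \cap Q_2)$ as $P_i$ ranges over $\pi(g_i)$ and $Q_i$ over $\pi(h_i)$. This is precisely the set of rectangles $P \times Q$ with $P$ a part of $\pi(g_1) \wedge \pi(g_2)$ and $Q$ a part of $\pi(h_1) \wedge \pi(h_2)$.

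Now I invoke the hypotheses: by meet-coherence of $G$ on $X$, there exists $g \in G$ with $\pi(g) = \pi(g_1) \wedge \pi(g_2)$, and similarly $h \in H$ with $\pi(h) = \pi(h_1) \wedge \pi(h_2)$. Since $g$ and $h$ again have coprime orders, Lemma~\ref{lemma:DPorbits}(2) gives $\pi(g, h) = \{P \times Q \mid P \in \pi(g),\ Q \in \pi(h)\}$, which matches the computed meet. Hence $\pi(g_1, h_1) \wedge \pi(g_2, h_2) = \pi(g,h) \in \pi(G \times H)$, and $G \times H$ is meet-coherent.

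There is no real obstacle here: the coprimality assumption makes the product-action orbit partitions multiplicative in both $\pi$ and the operation $\wedge$, and the result reduces immediately to the hypothesised meet-coherence in each factor. The proof mirrors the structure of Proposition~\ref{prop:DPjoin} for joins, but is actually simpler since no element need be produced whose order is the product of orders in $G$ and $H$.
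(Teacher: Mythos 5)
Your proof is correct and is exactly the paper's approach: the paper's proof consists of the single line ``this follows easily from Lemma~\ref{lemma:DPorbits}(2),'' and your argument simply spells out the details of that deduction (rectangular orbit partitions, meets of rectangles are rectangles of meets, then apply meet-coherence in each factor and the lemma once more).
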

\begin{proof}
This follows easily from Lemma \ref{lemma:DPorbits}(2).
\end{proof}
There are examples of groups $G \le \Sym(X)$ and $H \le \Sym(Y)$,
not of coprime order, such that $G\times H$ is meet-coherent on $X\times Y$. For instance if
both $G$ and $H$ act semiregularly, then so does~$G\times H$, and so by Proposition~\ref{prop:RegMeet} we see that  $G\times H$ is meet-coherent regardless
of the orders of $G$ and $H$.

\section{Wreath products}\label{section:WP}

Given sets $S$ and $T$, we write $S^T$ for the set of maps from $T$ to $S$. As usual,
we shall write all maps on the right.
If $S$ is a group, then $S^T$ inherits a group structure as the direct
product of~$|T|$ copies of $S$; here $|T|$ may be infinite. 


Throughout this section we let $G$ and $H$ be groups acting on sets $X$ and~$Y$ respectively.
The \emph{unrestricted wreath product} $G\wr H$ is defined to be the semidirect product $G^Y\rtimes H$, where
the action of $H$ on $G^Y$ is given by $fh=h^{-1}\circ f$ for $f\in G^Y$ and $h\in H$.

There are two natural actions of $G\wr H$. In the first $G \wr H$ acts on $X^Y$ (see \cite[Section 4.3]{Cameron});
in this action the wreath product does not generally inherit join- or meet-coherence from $G$ and $H$: for instance
$C_2\wr C_3$ is neither join- nor meet-coherent in this action. For this reason we shall not discuss it
any further here.

The second action of the wreath product 
is the \emph{imprimitive action} on $X\times Y$. If $f:Y\rightarrow G$ and $h\in H$ then
this action is given by
\[ (x,y)fh = (x(yf),yh) \quad\text{for $x\in X,\, y\in Y$\hskip-1pt.} \]
For $y \in Y$, let
\hbox{$B_y= X \times \{y\}$}. Then \hbox{$\{B_y \mid y \in Y\}$} is a
system of imprimitivity for this action, in the sense defined after Proposition~\ref{prop:partitions}.
In general $G\wr H$ does not
inherit meet-coherence from~$G$ and $H$ in the imprimitive action; for instance~$S_3\wr C_2$ is
not meet-coherent.
Join-coherence, however, is inherited in the case that $Y$ is finite, and the proof of this fact
is the
object of this section.

\begin{definition*}
Let $\p$ be a partition of $X\times Y$.
\begin{defnlist}
\item We write $\tilde{\p}$ for the partition of $Y$ defined by
\[
y_1\partequiv_{\tilde{\p}} y_2 \Longleftrightarrow (x_1,y_1)\partequiv_\p (x_2,y_2) \textrm{\ for some\ } x_1,x_2\in X.
\]
\item For $y \in Y$ we write $\p_{[y]}$ for the partition of $X$ defined by
\[
x_1 \partequiv_{\p_{[y]}} x_2 \Longleftrightarrow (x_1,y)\partequiv_{\p} (x_2,y).
\]
\end{defnlist}
\end{definition*}

The following lemma provides a characterization of the orbit partitions of the elements of a wreath product
in its imprimitive action.
For the application to Theorem~\ref{thmmain:products}(b) we only need the case where $Y$ is finite,
which permits some simplifications to the statement and proof.

\begin{lemma}\label{lemma:WreathCrit}
Let $\p$ be a partition of $X\times Y$. Then $\p$ is the orbit partition of an element of $G\wr H$
if and only if the following conditions hold.
\begin{numlist}
\item There exists an element $h\in H$ with orbit partition $\tilde{\p}$ on $Y$.
\item For every $y\in Y$, there exists an element $g \in G$ with orbit partition~$\p_{[y]}$ on~$X$.
\item If $y$ lies in an infinite part of $\tilde{\p}$ then $\p_{[y]}$ is the partition of $X$ into singleton sets.
\item Whenever elements $y$, $z\in Y$ lie in the same part of $\tilde{\p}$, there exists $c\in G$ such that
\[
(x,y)\partequiv_{\p} (xc,z) \textrm{\ for all\ } x \in X.
\]
\end{numlist}
\end{lemma}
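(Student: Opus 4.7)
My plan is to analyse the iterated action of an element $k = fh \in G\wr H$. A routine induction establishes the formula
\[ (x,y)(fh)^n = \bigl( x \cdot (yf)(yhf)(yh^2 f) \cdots (yh^{n-1}f),\, yh^n \bigr), \]
and both directions of the proof can then be read off from this computation.

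For necessity, suppose $\p = \pi(fh)$. Condition~(1) is immediate because the projection $X\times Y\to Y$ sends orbits of $fh$ to orbits of $h$, so $\tilde{\p}=\pi(h)$. Condition~(3) is also immediate: if $y$ lies in an infinite $h$-orbit then no non-zero power of $fh$ returns to the fibre $X\times\{y\}$, so the only $\partequiv_\p$-equivalences within that fibre are the trivial ones. For~(4), when $z = yh^n$, take $c = (yf)(yhf)\cdots(yh^{n-1}f) \in G$, giving $(x,y)\partequiv_\p (xc,z)$ for all $x$. For~(2), when the $h$-orbit of $y$ is finite of length $m$, the element $g_y = (yf)(yhf)\cdots(yh^{m-1}f) \in G$ describes the first-coordinate action of $(fh)^m$ on the fibre $X\times\{y\}$, so $\pi(g_y) = \p_{[y]}$; when the $h$-orbit of $y$ is infinite one takes $g_y = 1_G$, whose orbit partition is the all-singletons partition as required by~(3).

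For sufficiency, use~(1) to pick $h\in H$ with $\pi(h)=\tilde\p$, and build the function $f\colon Y\to G$ one $h$-orbit at a time. Fix an $h$-orbit $\mathcal{O}$, pick a representative $y_0\in\mathcal{O}$, and write $y_i = y_0 h^i$. By~(4) choose $c_i\in G$ with $(x,y_i)\partequiv_\p (xc_i, y_{i+1})$ for all $x\in X$. If $|\mathcal{O}|=m$ is finite, define $y_i f = c_i$ for $0\le i\le m-2$ and $y_{m-1} f = (c_0 c_1 \cdots c_{m-2})^{-1} g$, where $g\in G$ is provided by~(2) with $\pi(g)=\p_{[y_0]}$. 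If $\mathcal{O}$ is infinite, define $y_i f = c_i$ for every $i\in\Z$, using~(3) to avoid any closing-up condition. This defines $f$ on $Y$, and the candidate element is $fh\in G\wr H$. The iteration formula then computes the $\langle fh\rangle$-orbit of $(x,y_0)$ as $\{(x g^j c^{(r)}, y_r) : j\in\Z,\, 0\le r < m\}$ in the finite case (with $c^{(r)} = c_0 \cdots c_{r-1}$), and as $\{(x c^{(r)}, y_r) : r \in \Z\}$ in the infinite case; both are then matched against the corresponding part of $\p$ using repeated applications of~(4) and the definition of $\p_{[y_r]}$.

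The main technical obstacle is the finite-orbit matching: to compare the two descriptions one has to show that $\p_{[y_r]}$ coincides with the orbit partition of the conjugate $(c^{(r)})^{-1} g\, c^{(r)} \in G$. This follows by a two-step chase combining transitivity of $\partequiv_\p$ with the characterisations in~(4) and in the definition of $\p_{[\cdot]}$, but it is where all the genuine work lies. As the paper notes, Theorem~\ref{thmmain:products}(b) assumes $Y$ finite, so only finite $h$-orbits occur in the application, and the infinite-orbit case of the lemma is included purely for generality; it is much easier thanks to~(3), which reduces the matching to the uniqueness of the $y_r$-fibre element on each side.
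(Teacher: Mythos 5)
Your proposal is correct and follows essentially the same route as the paper: necessity is read off from the formula for $(x,y)(fh)^t$, and sufficiency is proved by choosing $h$ via (1), connecting elements within each $h$-orbit via (4), and arranging the return map at a representative to have orbit partition $\p_{[y_0]}$ via (2), with (3) handling infinite orbits. The only (cosmetic) difference is that you absorb the correction into the last value $y_{m-1}f = (c_0\cdots c_{m-2})^{-1}g$, whereas the paper replaces $c_{(i,0)}$ by $g_ib_i^{-1}c_{(i,0)}$; both achieve the same effect and the subsequent verification is the same chase.
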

\begin{proof}
Let $f\in G^Y$, let $h\in H$, and let $k=fh\in G\wr H$. It is clear that if $\p$
is the orbit partition of $k$ on $X\times Y$, then $\tilde{\p}$ is the orbit partition of $h$ on~$Y$,
and so (1) 
is necessary.
Let $t$ be a positive integer. A simple calculation
shows that $k^t = f_th^t$ where $f_t \in G^Y$ is defined by
\[
f_t: y\mapsto \bigl( yf \bigr) \bigl( (yh)f \bigr) \cdots \bigl( (yh^{t-1}) f \bigr).
\]
Hence
\[
(x,y)k^t = (x (yf_t),yh^t)
\]
for all $x\in X$, and it easily follows that (4)
is necessary. 

Suppose that $y\in Y$ lies in a finite $h$-orbit of size $m$.
In this case it is not hard to see that~$yf_m$ has the orbit partition $\p_{[y]}$ on $X$, since $yf_{am}=(yf_m)^a$ for all $a\in \Z$. On the other hand, if $y$ lies in an infinite orbit of $h$ then
$\p_{[y]}$ is the partition of $X$ into singleton parts, which is the orbit partition of the identity of $G$. So
(2) and (3) are necessary. 

Now suppose that the stated conditions hold. We shall construct a permutation $k \in G \wr H$ such
that $\pi(k) = \mathcal{P}$. By (1) there
exists an element $h$ of $H$ whose orbit partition is $\tilde{\p}$. Let $\{y_i\mid i\in I\}$ be a set of
representatives for the orbits of $h$, where $I$ is a suitable indexing set. Let $m_i \in \N \cup \{\infty\}$
be the size of the orbit containing $y_i$.
By (4) there exists $c_{(i,t)}\in G$ for $i \in I$ such that 
\begin{equation}
\tag{$\star$}
(x,y_ih^t)\partequiv_{\p} (xc_{(i,t)},y_ih^{t+1})
\end{equation}
for all $x\in X$ and $t \in \Z$.

Suppose that $m_i < \infty$. Then the element
\[
b_i = c_{(i,0)}c_{(i,1)}\, \cdots\, c_{(i,m_i-1)}
\]
of $G$ stabilizes the partition $\p_{[y_i]}$ of $X$. It
is possible that $\pi(b_i)$ is a strict refinement of $\p_{[y_i]}$.
However, by condition (2), there exists $g_i \in G$
such that $\pi(g_i) = \p_{[y_i]}$; if we replace $c_{(i,0)}$ with $g_ib_i^{-1}c_{(i,0)}$,
we then have $b_i = g_i$ and so $\pi(b_i) = \p_{[y_i]}$. We may therefore suppose
that the $c_{(i,t)}$ have been chosen so that $\pi(b_i) = \p_{[y_i]}$ for all $i$
such that $m_i < \infty$.

Let $f \in G^Y$ be defined by $(y_i h^t)f = c_{(i,t)}$ for each $i \in I$, where
$t \in \{0, \ldots, m_i-1 \}$ if $m_i < \infty$ and $t \in \Z$ otherwise.
Let $k = fh \in G \wr H$. We shall show
that \hbox{$\pi(k) = \p$}.

We define
\[ b_{(i,t)} = \begin{cases} c_{(i,0)} \cdots c_{(i,t-1)} & \text{if $t \ge 0$} \\
                             c_{(i,-1)}^{-1} \cdots c_{(i,t)}^{-1} & \text{if $t <0$}
\end{cases} \]
and note that
\[
(x,y_i)k^t = (xb_{(i,t)}, y_ih^t)
\]
for all $t \in \Z$. In particular, if $m_i < \infty$, then since $b_{(i,m_i)} = g_i$, we have
$(x,y_i)k^{m_i} = (xg_i, y_i)$. Observe also that, by $(\star)$, we have
\[ (x,y_i) \equiv_\p (xb_{(i,t)},y_ih^t) \]
for all $t \in \Z$.


Suppose that $(x_1,y_ih^s) \partequiv_{\p} (x_2,y_ih^t)$. Then by the observation just made
we have
\begin{align*}
(x_1,y_ih^s)&\partequiv_{\p} (x_1b_{(i,s)}^{-1}, y_i)  \\ 
(x_2,y_ih^t)&\partequiv_{\p} (x_2b_{(i,t)}^{-1}, y_i).
\end{align*}
Hence
\[
x_1b_{(i,s)}^{-1} \equiv_{\p_{[y_i]}} x_2b_{(i,t)}^{-1}.
\]
If $m_i < \infty$, then since $\pi(b_{(i,m_i)}) = \pi(g_i) = \p_{[y_i]}$, the elements
$(x_1b_{(i,s)}^{-1},y_i)$ and 
 $(x_2b_{(i,t)}^{-1},y_i)$ lie in the same orbit of $k^{m_i}$.
  Therefore
%
%
%
\begin{equation}
\tag{$\dagger$}
(x_1,y_ih^s)k^{-s}=(x_1b_{(i,s)}^{-1},y_i) \partequiv_{\pi(k)} (x_2b_{(i,t)}^{-1},y_i)=(x_2,y_ih^t)k^{-t}.
\end{equation}
On the other hand, if $m_i = \infty$, then by (3), $\p_{[y_i]}$ is the partition of $X$ into singleton sets
and so $x_1b_{(i,s)}^{-1} = x_2b_{(i,t)}^{-1}$. Therefore ($\dagger$) also holds in this case.
It follows that
\[ (x_1,y_ih^s)\partequiv_{\pi(k)}(x_2,y_ih^t),\]
and so $\p\preccurlyeq \pi(k)$.



The argument of the previous paragraph  in reverse, again using $(\dagger)$,
implies that $\pi(k)\preccurlyeq\p$, and so we have equality as required.
\end{proof}

With Lemma \ref{lemma:WreathCrit}, we are now in a position to prove the following result, which
combined with Proposition~\ref{prop:imprimitive} completes the proof of Theorem~\ref{thmmain:products}(b).

\begin{proposition}\label{prop:WPjoinc}
Suppose that $G$ and $H$ are join-coherent
permutation groups on $X$ and $Y$ respectively, and that $Y$ is finite.
Then $G \wr H$ is join-coherent in its imprimitive
action on~$X\times Y$.
\end{proposition}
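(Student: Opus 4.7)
The plan is to verify that $\p := \p_1 \vee \p_2$ satisfies the four conditions of Lemma~\ref{lemma:WreathCrit}, where $k_1, k_2 \in G \wr H$ are arbitrary elements, $k_i = f_i h_i$, and $\p_i = \pi(k_i)$. Since $Y$ is finite, condition~(3) is vacuous. For condition~(1), the map $\p \mapsto \tilde{\p}$ from $\Con(X \times Y)$ to $\Con(Y)$ coincides, after the obvious identification, with the lattice homomorphism $\phi^\mathcal{B}\theta$ appearing in the proof of Proposition~\ref{prop:imprimitive}, where $\mathcal{B} = \{X \times \{y\} : y \in Y\}$ is the natural system of imprimitivity. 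Noting that $\tilde{\p}_i = \pi(h_i)$, we obtain $\tilde{\p} = \pi(h_1) \vee \pi(h_2) = \pi(h)$ for some $h \in H$, using the join-coherence of $H$.

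For condition~(4), fix $y$ and $z$ in the same $\tilde{\p}$-class. From $\tilde{\p} = \tilde{\p}_1 \vee \tilde{\p}_2$ and the description of $\vee$ as a transitive closure, there is a walk $y = y^{(0)}, y^{(1)}, \ldots, y^{(r)} = z$ in $Y$ and indices $j_1, \ldots, j_r \in \{1,2\}$ with $y^{(i-1)} \equiv_{\tilde{\p}_{j_i}} y^{(i)}$ for each $i$. Since $\p_{j_i}$ itself satisfies condition~(4) of Lemma~\ref{lemma:WreathCrit}, being an orbit partition of $k_{j_i} \in G \wr H$, there exists $s_i \in G$ such that $(x, y^{(i-1)}) \equiv_{\p_{j_i}} (x s_i, y^{(i)})$ for all $x \in X$. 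Composing the steps gives $(x, y) \equiv_\p (xc, z)$ with $c = s_1 s_2 \cdots s_r \in G$ independent of $x$.

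The main work is condition~(2). Fix $y \in Y$ and define the \emph{loop group}
\[ L_y = \{ c \in G : (x, y) \equiv_\p (xc, y) \text{ for all } x \in X \}, \]
a subgroup of $G$. A path-composition argument parallel to the one for~(4) identifies $\p_{[y]}$ with the orbit partition of $L_y$ acting on $X$: any $\ell \in L_y$ gives $(x, y) \equiv_\p (x\ell, y)$, and conversely, any $\p$-path from $(x, y)$ to $(x', y)$ traces a closed walk in $Y$ based at $y$, whose composed shift yields an element of $L_y$ carrying $x$ to $x'$. Moreover $L_y$ is finitely generated: the relevant walks are loops based at $y$ in the graph whose vertex set is the $\tilde{\p}$-class of $y$ and whose directed edges are the $h_j^{\pm 1}$-steps for $j \in \{1,2\}$, and since $Y$ is finite this graph is finite, so its fundamental group is free of finite rank and $L_y$ is a homomorphic image. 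Writing $L_y = \langle l_1, \ldots, l_r \rangle$ with $l_i \in G$, we have $\p_{[y]} = \pi(l_1) \vee \cdots \vee \pi(l_r)$, which by iterated application of the join-coherence of $G$ lies in $\pi(G)$.

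The principal obstacle is condition~(2), specifically the finite generation of $L_y$; this is where the hypothesis that $Y$ is finite enters essentially.
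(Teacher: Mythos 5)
Your overall strategy is the paper's: take $\p=\pi(k_1)\vee\pi(k_2)$, which is the orbit partition of $K=\langle k_1,k_2\rangle$, and verify the four conditions of Lemma~\ref{lemma:WreathCrit}, with (3) vacuous, (1) from join-coherence of $H$ (the paper simply notes $\tilde{\p}$ is the orbit partition of $\langle h_1,h_2\rangle$; your use of the homomorphism $\phi^{\mathcal{B}}\theta$ is an equivalent justification), (4) by composing shifts along a walk (the paper takes a single element of $K$ carrying $y$ to $z$), and (2) from join-coherence of $G$ applied to a finitely generated subgroup of $G$ whose orbits are the parts of $\p_{[y]}$. The paper obtains that subgroup as the image of $K\cap R_y$ under $fh\mapsto yf$, where $R_y$ is the stabilizer in $G\wr H$ of the block $X\times\{y\}$, noting that $K\cap R_y$ has finite index in the finitely generated group $K$ and hence is finitely generated; your loops-in-a-finite-graph argument is the same Schreier-type fact in different clothing.

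There is one local error in your treatment of (2). As you define it, $L_y$ is the full part-wise stabilizer of $\p_{[y]}$ in $G$: an element $c$ lies in $L_y$ exactly when it maps each part of $\p_{[y]}$ into itself. Such a $c$ need not arise as the composed shift of any closed walk, so $L_y$ is not in general a homomorphic image of the fundamental group of your graph, and it need not be finitely generated; for instance, if $\p_{[y]}$ has an infinite part $P$ and $G=\Sym(X)$, then $L_y$ contains a copy of $\Sym(P)$. The repair is already contained in your own argument: replace $L_y$ by the subgroup $M_y\le G$ generated by the composed shifts of closed walks based at $y$ (equivalently, the image of $K\cap R_y$ under $fh\mapsto yf$). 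Each such shift preserves every part of $\p_{[y]}$, and your converse path argument shows that the $M_y$-orbits already fill out the parts of $\p_{[y]}$, so $\p_{[y]}$ is the orbit partition of $M_y$; and $M_y$ \emph{is} finitely generated, by your graph argument or by the paper's finite-index observation. With that substitution the rest of your proof (iterated join-coherence of $G$ applied to the generators of $M_y$) goes through and coincides with the paper's proof.
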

\begin{proof}
Let $f_1h_1$ and $f_2h_2$ be elements of $G\wr H$, and let $K$ be the subgroup they generate.
Let $\p$ be the partition of $X\times Y$ into the orbits of $K$.
It suffices to show that the conditions stated in
Lemma~\ref{lemma:WreathCrit} are satisfied by $\p$. Notice that condition~(3)
 is satisfied vacuously, since~$Y$ is
supposed to be finite.

Since $H$ is join-coherent, it has an element $h$ whose orbit partition $\s$ on~$Y$ is the same as that of the subgroup
$\langle h_1,h_2\rangle$. It is easy to see that $\tilde{\p}=\s$, and so (1) is satisfied. 

For $y\in Y$ let $B_y = X \times \{y\}$;
as noted at the start of this section,
\hbox{$\{B_y\mid y\in Y\}$} is a system of imprimitivity for $G\wr H$ on $X\times Y$.
We write~$R_y$ for the set-wise stabilizer of $B_y$ in $G\wr H$.
In the action of $R_y$ on~$X$ inherited from the action of $R_y$ on $B_y$,
an element $fh \in R_y$ acts on $X$ as~$yf \in G$. Thus if
$\theta : R_y \rightarrow G$ is the homomorphism defined by $(fh)\theta = yf$,
then $(K \cap R_y) \theta$ has orbit partition $\p_{[y]}$.
Now $K\cap R_y$ is finitely generated, since it has finite index in $K$.
Since~$G$ is join-coherent, it follows that
there exists an element $c_y$ of $G$ whose orbit partition is $\p_{[y]}$. This gives us (2). 

Finally suppose that $y$ and $z$ lie in the same part of $\tilde{\p}$.
Then there exist $f\in G^Y$ and $h\in H$ such that $yh=z$, and such that $fh\in K$. Let $c=yf$. We see that $(x,y)\partequiv_{\p} (xc,z)$ for all $x \in X$, and so (4) is satisfied.
This completes the proof. 
\end{proof}

\section{Centralizers}\label{section:Centralizers}

The first part of this paper ends with the proof of Theorem \ref{thmmain:Centralizers}.
The structure of centralizers in symmetric groups is well known.

\begin{lemma}\label{lemma:SymCent}
Let $\Omega$ be a set, let $G=\mathrm{Sym}(\Omega)$,
and let $g\in G$. For $k\in\N\cup\{\infty\}$, let $\Delta_k$ be the set of orbits of $g$ of size $k$. Then
as permutation groups, we have
\[
\Cent_G(g) =\prod_{k\in\N\cup\{\infty\}} \big(C_k\wr \Sym(\Delta_k)\big),
\]
where $C_\infty$ is understood to be $\Z$, the wreath products take the imprimitive action,
and the factors in the direct product
act on disjoint sets. 
\end{lemma}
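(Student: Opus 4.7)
The plan is to prove the lemma by a direct structural analysis of elements commuting with $g$. There are essentially two stages: (i)~decompose $\Cent_G(g)$ according to orbit sizes of $g$, and (ii)~identify the ``fixed-size'' centralizers with the stated wreath products.

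For stage (i), I would begin by observing that any $h \in \Cent_G(g)$ satisfies $h^{-1}gh = g$, which forces $h$ to send each $g$-orbit to another $g$-orbit of the same cardinality. Hence $h$ preserves the decomposition \[ \Omega = \bigsqcup_{k \in \N \cup \{\infty\}} \Omega_k, \qquad \Omega_k = \bigsqcup_{O \in \Delta_k} O. \] Since the sets $\Omega_k$ are pairwise disjoint and each is $h$-invariant, restriction gives an embedding $\Cent_G(g) \hookrightarrow \prod_k \Cent_{\Sym(\Omega_k)}(g|_{\Omega_k})$ which is manifestly an isomorphism, because centralizing permutations of the $\Omega_k$ can be chosen independently and then assembled. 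This produces the outer direct product in the statement, and reduces the problem to the case where every orbit of $g$ has the same size~$k$.

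For stage (ii), fix $k$ and write $g$ for $g|_{\Omega_k}$. Choose a basepoint $\omega_O$ in each orbit $O \in \Delta_k$; identifying $X_k = \{0,1,\dots,k-1\}$ (or $\Z$ if $k = \infty$) via $i \leftrightarrow g^i \omega_O$ gives an identification \[ \Omega_k \cong X_k \times \Delta_k \] under which $g$ acts as $(i,O) \mapsto (i+1, O)$ with addition understood modulo $k$ (or as integer addition if $k=\infty$). Any $h \in \Cent_{\Sym(\Omega_k)}(g)$ induces a permutation $\sigma \in \Sym(\Delta_k)$ of the orbits, and for each $O$ its restriction $h|_O : O \to O^\sigma$ is $g$-equivariant, hence determined by a single element $f(O) \in X_k$ recording $\omega_O h = g^{f(O)} \omega_{O^\sigma}$. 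Conversely, any pair $(f,\sigma) \in X_k^{\Delta_k} \times \Sym(\Delta_k)$ determines such a centralizing element. This gives a bijection between $\Cent_{\Sym(\Omega_k)}(g)$ and $C_k \wr \Sym(\Delta_k)$ (with $C_\infty = \Z$), and a routine semidirect-product calculation shows that composition of centralizer elements translates into the wreath product multiplication rule and that the action on $X_k \times \Delta_k$ is precisely the imprimitive action defined in Section~\ref{section:WP}.

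The only point requiring any care is the infinite case $k = \infty$: here one uses that $g$ acts regularly on each infinite orbit as $\Z$ acting on itself by translation, so the centralizer of $g$ on a single infinite orbit is $\Z$ (the group of translations), and the rest of the argument goes through with $C_k$ replaced by~$\Z$. I do not anticipate any serious obstacle; the whole proof is essentially an exercise in unwinding definitions, and this explains the author's remark that the structure ``is well known.''
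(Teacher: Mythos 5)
Your proof is correct: the decomposition by orbit size, the basepoint identification $\Omega_k \cong X_k \times \Delta_k$, and the matching of centralizing elements with pairs $(f,\sigma)$ acting imprimitively is exactly the standard argument, and the details you sketch (equivariance forcing $\omega_O h = g^{f(O)}\omega_{O^\sigma}$, the converse check, and the $k=\infty$ case with $C_\infty = \Z$) all go through. The paper itself gives no proof, stating the lemma as well known, so your write-up simply supplies the routine verification the authors omit.
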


With notation as in Lemma~\ref{lemma:SymCent},
suppose that $g \in G$ 
has finitely many orbits of all sizes $k \ge 2$,
including $k = \infty$.
Then the join-coherence of $\Cent_G(g)$ follows immediately
from Lemma~\ref{lemma:SymCent} using
 Propositions~\ref{prop:DPc}(1) and~\ref{prop:WPjoinc}. This is sufficient to establish Theorem~\ref{thmmain:Centralizers} so far as join-coherence is concerned. However it tells us nothing about meet-coherence, for which we require an account of orbit partitions in the centralizers of semiregular permutations.


\begin{lemma}\label{lemma:CentCrit}
Let $\Omega$ be a set and let $G=\Sym(\Omega)$. Let $\p$ be a partition of $\Omega$, and let $g\in G$ be an element such that $\langle g\rangle$ acts semiregularly on
$\Omega$. There exists $h\in\Cent_G(g)$ with orbit partition~$\p$ if and only if the following conditions hold:
\begin{numlist}
\item $\p^g = \p$;
\item every part of $\p$ is countable;
\item if $P$ is an infinite part of $\p$, then either $P$ meets only finitely many $g$ orbits, or else the elements of $P$ lie in
distinct $g$ orbits.
\end{numlist}
\end{lemma}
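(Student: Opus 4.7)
The plan is to prove necessity by structural analysis and sufficiency by an explicit orbit-by-orbit construction of $h$.

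For necessity, suppose $h \in \Cent_G(g)$ has $\pi(h) = \p$. Condition (1) is immediate, since $hg = gh$ forces $g$ to permute the $h$-orbits. Condition (2) follows because each part of $\p$ is an $\langle h\rangle$-orbit and $\langle h\rangle$ is countable. For (3), fix an infinite part $P$ and let $S_P = \{k \in \Z : Pg^k = P\} = m\Z$ for some $m \ge 0$. If $m = 0$, then each $g$-orbit meets $P$ in at most one point, so elements of $P$ lie in distinct $g$-orbits. If $m > 0$, then $g^m|_P$ commutes with $h|_P$, which is an infinite cycle on a countable set; its centralizer in $\Sym(P)$ is the cyclic group it generates, so $g^m|_P = (h|_P)^j$ for some $j \in \Z$. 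Semiregularity forbids fixed points of any nontrivial power of $g$, so either $j = 0$ with $g^m = 1$ (again placing elements of $P$ in distinct $g$-orbits), or $j \neq 0$, in which case $(h|_P)^j$ has exactly $|j|$ orbits on $P$; these are precisely the intersections of $P$ with the $g$-orbits it meets, so $P$ meets only $|j|$ of them.

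For sufficiency, assume (1)--(3). Because (1) says $\langle g\rangle$ permutes the parts of $\p$, I construct $h$ separately on each $\langle g\rangle$-orbit of parts. Fix such an orbit, pick a representative $P$ with $S_P = m\Z$, and plan to build a single cycle $h_0$ on $P$ of length $|P|$ that commutes with $g^m|_P$. I then extend by $h|_{Pg^k} = g^{-k} h_0 g^k$: the only consistency condition, arising when $Pg^k = P$, reduces to commutation of $h_0$ with a power of $g^m|_P$ and so is automatic, while $hg = gh$ holds by construction. The construction of $h_0$ splits into cases. If $m = 0$ the commutation requirement is vacuous, and any single cycle on $P$ works (using (2) to ensure $P$ is countable). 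If $P$ is finite of size $n$ and $m > 0$, semiregularity forces $g^m|_P$ to be a product of $n/d$ disjoint $d$-cycles; choosing a representative from each and interleaving their $\langle g^m\rangle$-orbits yields an enumeration $x_1, \ldots, x_n$ of $P$ with $x_{i+n/d} = x_i g^m$, so that $h_0 = (x_1, \ldots, x_n)$ satisfies $h_0^{n/d} = g^m|_P$. If $P$ is infinite and $m > 0$, condition (3) leaves only two possibilities: either $g^m|_P = 1$, and any bi-infinite enumeration of $P$ suffices; or $g^m|_P \neq 1$ and $P$ meets finitely many $g$-orbits, say $r$ of them, which correspond to $r$ infinite $\langle g^m\rangle$-orbits on $P$, and an analogous interleaving produces a bi-infinite sequence $(x_n)_{n \in \Z}$ with $x_{n+r} = x_n g^m$, whence $h_0 : x_n \mapsto x_{n+1}$ is the desired infinite cycle.

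The main obstacle is the sufficiency construction in the infinite case, where one must realise $g^m|_P$ as a power of a single infinite cycle on $P$. This is exactly what condition (3) ensures: without it, $g^m|_P$ could partition $P$ into infinitely many finite orbits, and no power of an infinite cycle has this cycle structure. The interleaving constructions themselves are elementary once the cycle structure of $g^m|_P$ is known, but careful bookkeeping is needed to verify that the assembled $h$ commutes with $g$ globally and has orbit partition precisely $\p$.
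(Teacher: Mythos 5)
Your proof is correct and takes essentially the same route as the paper: necessity by elementary analysis of how $\langle g\rangle$ interacts with a part (the paper phrases condition (3) via the index of the subgroup $\{n\in\Z \mid xh^n \equiv_{\pi(g)} x\}$ of $\Z$, you via the centralizer of an infinite cycle and the setwise stabilizer $m\Z$), and sufficiency by explicitly building on a representative part a single cycle that interleaves representatives of the $\langle g^m\rangle$-orbits and then transporting it to the other parts by powers of $g$, which is precisely the paper's construction $x_jg^k\mapsto x_{j+1}g^k$ (wrapping to $x_0g^{k+t}$) written in slightly different bookkeeping. The differences — your explicit case split on $m$ and on $|P|$, and the conjugation-extension $h|_{Pg^k}=g^{-k}h_0g^k$ — are organizational rather than substantive.
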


If $m\in\N\cup\{\infty\}$ is the order of the semiregular permutation $g$,
and $\Delta$ is the set of orbits of~$g$, then
by Lemma~\ref{lemma:SymCent}, we have $\Cent_G(g)\cong C_m\wr\Sym(\Delta)$, in its imprimitive action.
Using this, it is possible
to prove Lemma~\ref{lemma:CentCrit} by showing the
equivalence of its conditions with those of Lemma \ref{lemma:WreathCrit}.
However we prefer to give an independent and more illuminating proof
in which we explicitly construct an element of $\Cent_G(g)$ whose orbit partition is $\p$.

\begin{proof}
It is clear that the first two conditions are necessary. To see that the third condition is also necessary, suppose that $\p$ is the orbit partition
of $h\in\Cent_G(g)$. Let $P$ be a part of $\p$, and let $x\in P$.
Since $h$ stabilizes the orbit partition of $g$,
the set $A=\{n\in\Z\mid xh^n \equiv_{\pi(g)} x\}$
is a subgroup of $\Z$. The index
$|\Z:A|$ is equal to the number of $g$-orbits represented in $P$; if this number is infinite then $A=\{0\}$,
since~$\Z$ has no other
subgroup of infinite index.

Now suppose that the stated conditions hold. We shall construct an
element~$h\in\Cent_G(g)$ whose orbit partition is $\p$.
Let $\{P_i\mid i\in I\}$
be a set of orbit representatives for the action of $g$ on the parts of $\p$, where $I$ is a suitable
indexing set. 
Let $S_i=\bigcup_{j \in \Z} P_ig^j$. It is clear that the sets~$S_i$ form a partition $\s$
of
$\Omega$. In fact it is easy to see that
$\s=\p\vee\pi(g)$. 

Since $h$ may be defined separately on the distinct parts of $\s$, we may suppose without
loss of generality that $\s$ is the
trivial partition of $\Omega$ into a single part.

Let $P$ be a part of $\p$, and let $X=\{x_j\mid j\in J\}\subseteq P$ be a set of representatives for the orbits of $g$ on
$\Omega$, where $J$ is a suitable indexing ordinal. If $J$ is infinite then it may be taken
to be the smallest infinite ordinal~$\omega$.
By assumption $P$ is countable, and either $J$ is finite or else $X=P$. Let $t$ be the least positive integer such that
$Pg^t=P$, or $0$ if no such integer exists. It is clear that if $t=0$ then $X=P$.

We have assumed also that $g$ is a semiregular permutation. Let $m$ be the length of a cycle of $g$; here $m$ may be infinite.
For convenience we define $M=\Z/m\Z$ if~$m$ is finite, and $M=\Z$ if $m$ is infinite. We shall allow $g$ to take exponents
from~$M$.
Every element of $\Omega$ has a unique representation as $x_jg^k$
for some $j\in J$ and $k\in M$.

Observe that $x_jg^k$ lies in $P$ if and only if $k$ is a multiple of $t$.
Define $h$ on~$\Omega$ by
\[
x_jg^kh= \left\{\begin{array}{cc} x_{j+1}g^k & \textrm{if $j+1<J$}\\
x_0g^{k+t} & \textrm{if $j+1=J$}.\end{array}\right.
\]
(The second line of the definition, of course, arises only when $J$ is finite.)
Thus $h$ fixes each part of $\p$, and permutes the orbits of $g$.

We now show that $h$ commutes with $g$. It suffices
to show that $x_jg^k$ has the same image under~$gh$ and under $hg$,
for $j \in J$ and $k \in M$. 
Suppose that $j+1<J$; then
\[
(x_jg^k)gh=x_jg^{k+1}h=x_{j+1}g^{k+1}=x_{j+1}g^kg=(x_jg^k)hg.
\]
For the remaining case, we have
\[
(x_{J-1}g^k)gh=x_{J-1}g^{k+1}h=x_0g^{k+1+t}=x_0g^{k+t}g=(x_{J-1}g^k)hg.
\]

It is clear that the points $x_j$ lie in a single orbit $\mathcal{O}$ of $h$ 
and so $X\subseteq \mathcal{O}\subseteq P$. If~$J$ is infinite then clearly $\mathcal{O}=P$, since
$X=P$.
If $J$ is finite then $\mathcal{O}$ contains $x_jg^k$ for each $j \in J$ and each $k \in M$
such that $k$ is a multiple of $t$;
since every element of $P$ is of this form, we have $\mathcal{O} = P$ in this case too.
Now since $g$ and $h$ commute,
every other part of $\p$ is also an orbit of $h$, and so $\pi(h) = \p$ as required.
\end{proof}

We are now in a position to prove Theorem~\ref{thmmain:Centralizers}, which we restate for convenience.
\setcounter{thmmain}{2}
\begin{thmmain}
Let~$\Omega$ be a set, let $G = \Sym(\Omega)$, and let~$g\in G$. 
For \hbox{$k\in\mathbf{N}\cup\{\infty\}$} let~$\n_k$ be the number of orbits of~$g$ of size~$k$. 
\begin{thmlist}
\item $\Cent_{G}(g)$ is meet-coherent. 
\item If~$\n_k$ is finite for all $k\neq 1$, including $k = \infty$,
then $\Cent_G(g)$ is join-coherent. 
\end{thmlist}
\end{thmmain}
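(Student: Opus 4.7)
The plan is to reduce both parts to the case where $g$ acts semiregularly. For each $k \in \N \cup \{\infty\}$ let $\Omega_k$ be the union of those $g$-orbits of size $k$; these sets partition $\Omega$, and every element of $\Cent_G(g)$ preserves each $\Omega_k$ setwise, since conjugation preserves cycle type. It follows that $\Cent_G(g)$ is isomorphic, as a permutation group, to the direct product $\prod_k \Cent_{\Sym(\Omega_k)}(g|_{\Omega_k})$ acting intransitively on $\Omega = \bigcup_k \Omega_k$, and that on each $\Omega_k$ the restriction $g|_{\Omega_k}$ is semiregular. Lemma~\ref{lemma:DPorbits}(1), and with it Proposition~\ref{prop:DPc}, extends in an evident way to direct products of arbitrarily many factors on a disjoint union, so it suffices to establish the relevant coherence of each factor $\Cent_{\Sym(\Omega_k)}(g|_{\Omega_k})$ individually.

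Part~(b) then follows as observed immediately after Lemma~\ref{lemma:SymCent}: under the hypothesis on $g$, each $\Delta_k$ with $k \neq 1$ is finite, so Lemma~\ref{lemma:SymCent} identifies the factor at $k$ as $C_k \wr \Sym(\Delta_k)$ in its imprimitive action, and Proposition~\ref{prop:WPjoinc} gives its join-coherence (since $C_k$, being cyclic, and $\Sym(\Delta_k)$ are both join-coherent). The remaining factor at $k = 1$ is simply $\Sym(\Delta_1)$, which is join-coherent for any $\Delta_1$.

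For part~(a), my approach is to apply Lemma~\ref{lemma:CentCrit} directly to each semiregular factor. Given two partitions $\p_1, \p_2$ in $\pi(\Cent_{\Sym(\Omega_k)}(g|_{\Omega_k}))$, I must verify that the meet $\p_1 \meet \p_2$ satisfies the three conditions of that lemma. Condition~(1), namely $(\p_1 \meet \p_2)^g = \p_1 \meet \p_2$, is immediate from the fact that the action of $g$ on $\Con(\Omega_k)$ is a lattice automorphism and each $\p_i$ is already $g$-invariant. Condition~(2) is equally immediate, since each part of $\p_1 \meet \p_2$ is contained in a (countable) part of $\p_1$. The main, though still routine, obstacle is condition~(3): given an infinite part $P_1 \cap P_2$ of the meet, I must argue that either it meets only finitely many $g$-orbits, or else its elements lie in pairwise distinct $g$-orbits. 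I plan to do this by cases on which alternative of condition~(3) holds for $P_1$ and for $P_2$: if either $P_i$ has its elements in pairwise distinct $g$-orbits then the subset $P_1 \cap P_2$ inherits that property, and if $P_1$ meets only finitely many $g$-orbits then a fortiori so does $P_1 \cap P_2$. Once these three conditions are in hand, Lemma~\ref{lemma:CentCrit} produces the required element of $\Cent_{\Sym(\Omega_k)}(g|_{\Omega_k})$ with orbit partition $\p_1 \meet \p_2$.
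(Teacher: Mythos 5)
Your proposal is correct and takes essentially the same approach as the paper: reduce to a semiregular $g$ via the decomposition of $\Cent_G(g)$ into factors acting on the sets $\Omega_k$ (Lemma~\ref{lemma:SymCent} together with the evident extension of Proposition~\ref{prop:DPc} to many factors), and then verify the three conditions of Lemma~\ref{lemma:CentCrit} for the meet, exactly as in the paper. The only divergence is in part (b), where you invoke the wreath-product route (Lemma~\ref{lemma:SymCent} plus Proposition~\ref{prop:WPjoinc}) that the paper itself points out suffices for join-coherence, whereas its formal proof of the theorem deduces (b) from Lemma~\ref{lemma:CentCrit} as well; this difference is immaterial.
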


\begin{proof}
By Lemma \ref{lemma:SymCent}, we see that $\Cent_{G}(g)$ is the direct product of centralizers of
semiregular permutations. By the two parts of Proposition \ref{prop:DPc},
it will be sufficient to prove the result in the case that $g$ acts semiregularly on~$\Omega$.
We therefore suppose that $g$ is a product of cycles of length $k$, where $k\in\N\cup\{\infty\}$.

Let $\mathcal{P}$ and $\mathcal{Q}$ be partitions in $\Cent_{G}(g)$. By Lemma \ref{lemma:CentCrit} we
have $\mathcal{P}^g=\mathcal{P}$ and $\mathcal{Q}^g=\mathcal{Q}$, from which it follows
easily that $\left(\mathcal{P}\vee\mathcal{Q}\right)^g=\mathcal{P}\vee\mathcal{Q}$ and
$\left(\mathcal{P} \meet \mathcal{Q}\right)^g = \mathcal{P} \meet \mathcal{Q}$.
It is clear that $\mathcal{P} \meet \mathcal{Q}$ also satisfies conditions~(2)
and~(3) of Lemma~\ref{lemma:CentCrit}, and so
$\mathcal{P} \meet \mathcal{Q} \in \pi(\Cent_{G}(g))$. Hence $\pi(\Cent_{G}(g))$ is meet-coherent.

Each part of $\mathcal{P}$ and each part of $\mathcal{Q}$ is countable, and so the parts of
$\mathcal{P}\vee\mathcal{Q}$
are countable. If $k>1$, then by hypothesis $g$ has only finitely many orbits; therefore the only case
in which a part $S$ of $\mathcal{P}\vee\mathcal{Q}$ can meet an infinite number of orbits is when $k=1$,
and clearly in this case~$S$ meets each orbit in at most one point. Hence $\mathcal{P}
\join \mathcal{Q}$ also satisfies conditions (2) and (3) of Lemma~\ref{lemma:CentCrit},
and so $\mathcal{P}\vee\mathcal{Q}\in\pi(\Cent_{G}(g))$. Hence $\pi(\Cent_G(g))$ is join-coherent.
\end{proof}

We have seen that the condition that the orbit multiplicities $\n_k$ are finite for $k>1$
is a sufficient condition for the centralizer in $\Sym(\Omega)$ of
$g$ to be join-coherent. 
Our next proposition shows that this is also a necessary condition.

\begin{proposition}\label{prop:orbitcondition}
Let $\Omega$ be a set and let $G = \Sym(\Omega)$.
Let $g \in G$ be a permutation
which has an infinite number of cycles of length $k$,
for some $k>1$, where $k$ may be infinite. Then $\Cent_G(g)$ is not join-coherent.
\end{proposition}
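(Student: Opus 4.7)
My plan is to exhibit two partitions $\mathcal{P}, \mathcal{Q} \in \pi(\Cent_G(g))$ whose join violates the orbit criterion of Lemma~\ref{lemma:CentCrit}, so that $\mathcal{P} \vee \mathcal{Q} \notin \pi(\Cent_G(g))$. As a preliminary reduction, Lemma~\ref{lemma:SymCent} realises $\Cent_G(g)$ as an intransitive direct product over the cycle lengths of $g$, and the natural extension of Proposition~\ref{prop:DPc}(1) to arbitrary intransitive direct products reduces the problem to showing that the factor $C_k \wr \Sym(\Delta_k)$ is not join-coherent on the set $\Omega_k$ of points in $k$-cycles of $g$, given that $\Delta_k$ is infinite. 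I may therefore assume that $g$ is semiregular on $\Omega$ with infinitely many orbits, all of length $k$ (possibly $k = \infty$), and apply Lemma~\ref{lemma:CentCrit} directly.

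Enumerate the $g$-orbits as $O_1, O_2, \ldots$ and label $O_i = \{x_{i,j}\}$ with $j$ ranging over $\Z/k\Z$ (or over $\Z$ when $k = \infty$), in such a way that $x_{i,j} g = x_{i,j+1}$. I define $\mathcal{P}$ to have parts $\{x_{2i-1,j}, x_{2i,j}\}$ for $i \geq 1$ and each valid $j$, and define $\mathcal{Q}$ to have parts $\{x_{2i,j}, x_{2i+1,j}\}$ for $i \geq 1$ and each valid $j$, together with one further part equal to all of $O_1$. Both partitions are manifestly $g$-invariant, every part is countable, and the only possibly-infinite part (namely $O_1$ in $\mathcal{Q}$, arising only when $k = \infty$) is contained in a single $g$-orbit; hence conditions (1)--(3) of Lemma~\ref{lemma:CentCrit} hold and $\mathcal{P}, \mathcal{Q} \in \pi(\Cent_G(g))$.

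To analyse the join, observe that the part $O_1 \in \mathcal{Q}$ already contains every $x_{1,j}$, so the pairs $\{x_{1,j}, x_{2,j}\} \in \mathcal{P}$ absorb $O_2$ into $O_1$; the pairs $\{x_{2,j}, x_{3,j}\} \in \mathcal{Q}$ then absorb $O_3$, the pairs $\{x_{3,j}, x_{4,j}\} \in \mathcal{P}$ absorb $O_4$, and so on. By induction $\bigcup_{i \geq 1} O_i = \Omega$ is a single part of $\mathcal{P} \vee \mathcal{Q}$, and this part meets infinitely many $g$-orbits, each in $k \geq 2$ distinct elements. Condition~(3) of Lemma~\ref{lemma:CentCrit} therefore fails, so $\mathcal{P} \vee \mathcal{Q} \notin \pi(\Cent_G(g))$ and $\Cent_G(g)$ is not join-coherent.

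The main subtlety lies in the asymmetric design of $\mathcal{Q}$: the ``seed'' part $O_1$ combined with the offset pairing of the remaining orbits is what ignites the cascade through every $O_i$ in the join. Either $\mathcal{P}$ or $\mathcal{Q}$ on its own would only merge orbits pairwise and produce no offending part. The reduction to the semiregular case and the checks of conditions (1)--(3) for $\mathcal{P}$ and $\mathcal{Q}$ are otherwise routine.
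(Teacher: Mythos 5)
Your proof is correct and takes essentially the same route as the paper: reduce to the semiregular case and apply Lemma~\ref{lemma:CentCrit} twice, first to certify that the two chosen partitions lie in $\pi(\Cent_G(g))$ and then to show that their join --- the partition of $\Omega$ into a single part --- violates condition (3); the paper differs only in its choice of witnesses, using $\pi(g)$ itself together with the transversal partition $\mathcal{S} = \{Sg^{i}\}$ in place of your two pairing partitions and seed part. The one detail to tidy is that enumerating the orbits as $O_1, O_2, \ldots$ tacitly assumes there are countably many of them; the paper makes the same simplification explicitly, and as there it is repaired by running the construction on a countably infinite subfamily of orbits and letting both partitions agree with, say, the singleton partition elsewhere.
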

\begin{proof}
We may assume that $g$ is semiregular, and that all of its orbits have size $k$.
We suppose for simplicity
that~$\Omega$ is countable; the generalization to higher cardinalities is straightforward.
Let~$S$ be a set of representatives for the orbits of $g$. We define $\mathcal{S}$ by
\[
\mathcal{S}= \bigl\{ S^{g^{\raisebox{-1pt}{$\scriptscriptstyle i$}}} \mid i \in M \}
\]
where $M = \{0,\ldots,k-1\}$ if $k < \infty$ and $M = \Z$ if $k=\infty$.

It is easy to see that $\mathcal{S}^g = \mathcal{S}$. By assumption each part of $\mathcal{S}$ is countable,
and it is clear that each part of $\mathcal{S}$ has a single element in common with each orbit of $g$.
Hence $\mathcal{S}$ satisfies the conditions of Lemma~\ref{lemma:CentCrit},
and so $\mathcal{S}\in\pi(\Cent_G(g))$. However $\pi(g) \join \mathcal{S}$ is the trivial partition
of~$\Omega$ into a single part. Since~$k > 1$, this partition does not satisfy
condition (3) of Lemma~\ref{lemma:CentCrit}, and so $\pi(\Cent_G(g))$ is not join-coherent.
\end{proof}


\section{Frobenius groups}
\label{section:Frobenius}
We recall that a \emph{Frobenius group} is a transitive permutation group $G$ on a
finite set $\Omega$, such that each point stabilizer in $G$ is non-trivial,
but the intersection of the stabilizers of distinct points is trivial.
The fixed-point free elements of $G$, together with the identity of~$G$, form a
normal subgroup~$K$,
called the \emph{Frobenius kernel}. The Frobenius kernel acts regularly,
and it is often useful to identify $K$ with $\Omega$ by fixing an element
$\omega \in \Omega$ and mapping $\omega k\in \Omega$ to $k \in G$.
The  stabilizer $H$ of a point $\omega \in \Omega$ is called a \emph{Frobenius
complement}, and acts semiregularly on $\Omega\setminus\{\omega\}$.
Identifying~$\Omega$ with $K$ one finds that any complement $H$ embeds into $\Aut(K)$, and
so~$G$ is isomorphic as a permutation group to $K\rtimes H$, where $K$
has the right regular action on itself. (For 
further results on Frobenius groups
the reader is referred to Theorem 10.3.1 of~\cite{Gorenstein}.)

We give a complete account of join- and meet-coherence in Frobenius groups. As well
as being a natural object of study, these results will be important in
Sections~\ref{section:primitive} and~\ref{section:RNCS}.

\begin{proposition}
A Frobenius group is meet-coherent if and only if it is dihedral of prime degree.
\end{proposition}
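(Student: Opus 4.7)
The plan is to prove both directions. For the easy direction, suppose $G = D_{2p}$ with $p$ prime. The elements of $G$ are the identity, $p-1$ non-identity rotations (each a single $p$-cycle), and $p$ reflections (each fixing one point of $\Omega$ and producing $(p-1)/2$ transpositions); consequently $\pi(G)$ consists of the partition into singletons, the partition with a single part, and $p$ ``reflection partitions'' $\mathcal{R}_\omega$ indexed by the fixed point. Meet-closure involving either extreme partition is trivial, and for two distinct reflection partitions $\mathcal{R}_{\omega_1}, \mathcal{R}_{\omega_2}$ coming from reflections $r_1, r_2$, I would show the meet is the partition into singletons: a common non-trivial block $\{a,b\}$ would force $r_1(b) = a = r_2(b)$, so $r_1 r_2$ would fix $b$; but $r_1 r_2$ is a non-identity rotation, which since $p$ is prime is a $p$-cycle with no fixed points.

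For the converse, let $G = K \rtimes H$ be a meet-coherent Frobenius group with $H = \Stab(\omega)$. The argument splits into two steps. Step~1 is to show $|H| = 2$: suppose $|H| \geq 3$, pick $h \in H \setminus \{1\}$ and $x \in \Omega \setminus \{\omega\}$, and set $y = xh$, noting $y \neq x$ since $h$ has $\omega$ as its unique fixed point. The set $\{g \in G : xg = y\}$ is a coset of $\Stab(x)$ and hence has cardinality $|H|$; exactly one such $g$ lies in $K$ (regular action of the kernel), and exactly one lies in $\Stab(\omega)$, namely $h$ itself, by the Frobenius identity $\Stab(x) \cap \Stab(\omega) = 1$. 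Since $|H| - 2 \geq 1$, there exists $h' \in \Stab(\omega') \setminus \{1\}$ for some $\omega' \neq \omega$ with $xh' = y$. One checks $\omega, \omega', x, y$ are pairwise distinct (most delicately, $y \neq \omega'$: otherwise $h'$ would send both $x$ and $\omega'$ to $\omega'$, contradicting injectivity). Then $\pi(h) \wedge \pi(h')$ has $\{\omega\}$ and $\{\omega'\}$ as two distinct singletons together with a block containing $\{x, y\}$, and no element of $G$ has such an orbit partition: two fixed points would force the element to be trivial by the Frobenius property, yet the identity has no non-singleton block. This contradiction forces $|H| = 2$.

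Step~2 shows that $|K|$ is prime. With $|H| = 2$, the group $G$ is dihedral of order $2n$ with $K$ cyclic of odd order $n$ and the involution $h \in H$ acting by inversion on $K$. Suppose $n$ is not prime, take a prime $p \mid n$ with $p < n$, and let $k$ generate the unique subgroup of $K$ of order $p$. Identifying $\Omega$ with $K$ so that $\omega = 1$, I would compute $\pi(h) \wedge \pi(k)$: the coset $\langle k \rangle$ contributes the singleton $\{1\}$ together with $(p-1)/2$ pairs $\{k^i, k^{-i}\}$, and each other coset $\langle k \rangle m$ is mapped by $h$ to the distinct coset $\langle k \rangle m^{-1}$ (distinct because $K/\langle k \rangle$ has odd order), so contributes only singletons to the meet. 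Thus the meet has at least $1 + (n-p) \geq 2$ singletons together with $(p-1)/2 \geq 1$ non-singleton blocks, which cannot be the orbit partition of any element of $G$ by the same Frobenius argument. This forces $n = p$, and so $G \cong D_{2p}$. The main obstacle is Step~1, specifically the simultaneous verification that $\omega, \omega', x, y$ are pairwise distinct so that the failure of meet-coherence is genuinely visible; the Frobenius property is used repeatedly, both to count preimages of $y$ in $G$ and to handle intersections of point stabilizers.
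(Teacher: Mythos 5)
Your overall strategy follows the paper's: the forward direction rests on the fact that two distinct reflections cannot both carry $a$ to $b$ when the degree is an odd prime, and the converse rests on the observation that a meet with two singleton parts and a non-singleton part cannot be $\pi(g)$ for any $g$, since two fixed points force $g=1$. Your Step 1 is a correct counting variant of the paper's construction: where the paper chooses $g\in\Stab(\alpha)$, $h\in\Stab(\beta)$ with $gh\notin K$ and exploits a fixed point of $gh$, you count the $|H|$ elements carrying $x$ to $y$ and observe that only one lies in $K$ and only one in $\Stab(\omega)$, so a third such element fixes some other point $\omega'$; your verification that $\omega,\omega',x,y$ are pairwise distinct is sound, and the contradiction is the same as the paper's.

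There is, however, a genuine gap in Step 2. Once $|H|=2$ you assert that $G$ is dihedral with $K$ \emph{cyclic} of odd order $n$. Cyclicity of $K$ is precisely what remains to be proved at that stage, and it does not follow from $|H|=2$: the generalized dihedral group built from inversion on $C_3\times C_3$ is a Frobenius group of degree $9$ with complement of order $2$ and non-cyclic kernel. Consequently ``the unique subgroup of $K$ of order $p$'' need not exist. What is true --- and is the classical fact the paper quotes from Gorenstein --- is that a fixed-point-free automorphism of order $2$ forces $K$ to be abelian of odd order with $h$ acting by inversion; you use this but neither prove nor cite it. Fortunately your computation never actually needs uniqueness or cyclicity: it only needs \emph{some} subgroup $L=\langle k\rangle$ of prime order $p<n$, which Cauchy's theorem supplies whenever $n$ is composite, and your coset analysis ($L$ contributes $\{1\}$ and $(p-1)/2$ pairs, while every other coset $Lm$ is sent by inversion to the distinct coset $Lm^{-1}$, since $K/L$ has odd order, and so contributes only singletons) then goes through verbatim, producing at least two singletons together with a non-singleton block. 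So the proof is repairable by replacing ``the unique subgroup'' with ``a subgroup of order $p$'' and invoking the fixed-point-free involution theorem; as written, though, the intermediate structural claim is false in general and must not be assumed.
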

\begin{proof}
Let $D$ be the dihedral group of prime degree $p$ acting on a $p$-gon $\Pi$. For any
vertices~$\alpha$ and $\beta$
of $\Pi$, there is a unique reflection mapping $\alpha$ to $\beta$. It follows that if $\mathcal{P}$
and $\mathcal{Q}$ are the orbit partitions
of distinct reflections then $\mathcal{P}\wedge\mathcal{Q}$ is the discrete partition. Since the
only non-identity elements of $D$ are
reflections and full cycles, we see that $D$ is meet-coherent.

For the converse, let $G$ be a meet-coherent Frobenius group with Frobenius kernel $K$,
acting on a set $\Omega$.
If $g$ and $h$ lie
in different point stabilizers, then since $\pi(g) \meet \pi(h)$
has two singleton parts, we see that $\pi(g) \meet \pi(h)$ is the orbit
partition of the identity. It follows that
if $\mathcal{O}_g$ is an
orbit of $g$, and $\mathcal{O}_h$ is an orbit of $h$, then
$|\mathcal{O}_g \cap \mathcal{O}_h| \le 1$.

Suppose that $|G : K| > 2$. Let $\alpha$ and $\beta$
be distinct points in $\Omega$. Since the point stabilizers of $\alpha$ and $\beta$ meet every coset
of $K$, and since there are at least three such cosets, we may choose elements $g \in \Stab_G(\alpha)$ and
$h\in \Stab_G(\beta)$, such that $gh \not\in K$.
Then $gh$ fixes a point $\gamma \in \Omega$, distinct from $\alpha$ and $\beta$.
Now clearly $\{\gamma, \gamma g\}$
is a subset of a $g$-orbit $\mathcal{O}_g$, and $\{ \gamma g, \gamma gh \}$ is
a subset of an $h$-orbit $\mathcal{O}_h$. But since $\gamma gh=\gamma$,
we have found two points in $\mathcal{O}_h \cap \mathcal{O}_g$.
This contradicts the observation above that $|\mathcal{O}_g \cap \mathcal{O}_h| \le 1$.

Therefore $|G : K| = 2$, and if $t\in G\setminus K$, then $t$ acts on $K$ as a fixed-point free
automorphism of order $2$. It is well-known that this implies that $K$ is an abelian group of odd order, and that~$t$ acts by inversion (see \cite[Ch.\ 10, Theorem 1.4]{Gorenstein}).

Suppose that $K$ has a non-trivial proper subgroup $L$.
Since $L$ has odd order, we have
\hbox{$[K : L] \ge 3$}.
If $k\in K$ then the orbits of $k$ are the cosets of $K$,
whereas $t$ preserves $K$ and acts non-trivially on the
cosets $K/L$. It easily follows that $\pi(k) \meet \pi(t)$ is not
the orbit partition of any permutation in $G$; this contradicts the assumption that $G$ is meet-coherent.
Therefore~$K$ is cyclic of prime order $p$, and $G$ is a dihedral group of order~$2p$.
\end{proof}


Since a transitive join-coherent permutation group $G$ of finite degree contains a
full cycle, it is clear that if $G$ is a Frobenius group then its kernel is cyclic.
For this reason, we provide a description of Frobenius groups with a cyclic kernel.
\begin{lemma}\label{lemma:CycFrob}
Let $n\in \N$, and let $H$ be a non-trivial finite group. There exists a Frobenius
group with cyclic kernel
$K \cong C_n$ and complement $H$ if and only if $H \cong C_r$, where
$r$ divides $p-1$ for each prime divisor $p$ of $n$.
\end{lemma}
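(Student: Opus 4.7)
The plan is to identify $H$ with its image under the embedding $H\hookrightarrow \Aut(K)\cong (\Z/n\Z)^\times$ provided by the general theory of Frobenius groups recalled at the start of this section, and then translate the semiregularity of $H$ on $K$ into a purely arithmetic statement. Since $(\Z/n\Z)^\times$ is abelian, this embedding already forces any Frobenius complement of a cyclic kernel to be abelian.

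The key translation step is the observation that the automorphism $x\mapsto x^a$ of $C_n$ has a non-identity fixed point if and only if some prime divisor $p$ of $n$ also divides $a-1$; equivalently, it acts fixed-point-freely on the non-identity elements of $C_n$ precisely when $\gcd(a-1,n)=1$. Thus for each prime $p\mid n$, the reduction map $\rho_p\colon (\Z/n\Z)^\times\to (\Z/p\Z)^\times$ must have trivial kernel on $H$. Since $(\Z/p\Z)^\times$ is cyclic of order $p-1$, this shows that $H$ is itself cyclic and that its order $r$ divides $p-1$ for every prime divisor $p$ of $n$, giving the forward direction.

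For the converse, suppose $r\ge 2$ satisfies $r\mid p-1$ for every prime $p\mid n$. Note that $r\ge 2$ combined with $r\mid p-1$ forces $p$ to be odd, so $n$ is odd. Writing $n=\prod_p p^{a_p}$, each factor $(\Z/p^{a_p}\Z)^\times$ is cyclic of order $p^{a_p-1}(p-1)$, and therefore contains a unique cyclic subgroup of order $r$. Choose a generator $\alpha_p$ of this subgroup; because the kernel of $(\Z/p^{a_p}\Z)^\times\to (\Z/p\Z)^\times$ has order $p^{a_p-1}$ coprime to $r$, the image of $\alpha_p$ in $(\Z/p\Z)^\times$ is still of order $r$, and in particular is non-trivial. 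Assembling via the Chinese Remainder Theorem yields $\alpha\in (\Z/n\Z)^\times$ of order $r$ with the property that every non-trivial power $\alpha^k$ is non-trivial modulo each $p\mid n$; that is, $\gcd(\alpha^k-1,n)=1$ for all $k$ with $0<k<r$. The semidirect product $C_n\rtimes\langle\alpha\rangle$ is then the required Frobenius group.

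The main obstacle is essentially bookkeeping rather than anything conceptual: one must carefully convert the geometric condition of fixed-point-freeness on $K$ into the coprimality condition $\gcd(a-1,n)=1$, and then confirm that the CRT assembly preserves this condition across all prime factors of $n$ simultaneously. With the basic structure of $(\Z/p^a\Z)^\times$ in hand, no further input is needed.
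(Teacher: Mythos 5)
Your proof is correct and follows essentially the same route as the paper: your reduction map $\rho_p$ restricted to $H$ is exactly the paper's faithful action of $H$ on the unique subgroup $L\cong C_p$ of $K$, giving the embedding $H\hookrightarrow C_{p-1}$, and your converse is the paper's Chinese Remainder Theorem construction of an element of multiplicative order $r$ modulo each $p_i^{a_i}$. The translation of fixed-point-freeness into $\gcd(a-1,n)=1$ is a sound and slightly more explicit way of phrasing the same argument.
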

\begin{proof}
Suppose that there exists such a Frobenius group.
As mentioned at the start of this section, we may consider $H$ as a group of automorphisms of
$K$. Note
that any non-identity element of $H$ acts without fixed points on the non-identity
elements of $K$.
It follows that $n$ is odd, since an even cyclic group has a unique element of order
$2$.
Let $p$ be a prime divisor of~$n$ and
let $L \cong C_p$ be the unique subgroup of $K$ of order $p$.
Then $H$ acts faithfully on $L$ and since $\Aut(L) \cong C_{p-1}$,
it follows that
$H$ is cyclic of order dividing $p-1$. Hence the order
of $H$ divides $p-1$ for each prime divisor $p$ of $n$.

Conversely, let $n=p_1^{a_1}\dots p_m^{a_m}$, let $K = \left< x \right> \cong C_n$,
and let $r$ divide $p_i-1$ for all $i$. The
Chinese Remainder Theorem allows us to choose $d\in \N$ such that $d$ has
multiplicative
order $r$ modulo~$p_i^{a_i}$ for all $i$. The map $h:x\mapsto x^d$ is an
automorphism of $K$ of order $r$, and it is easy to check that $H=\langle h\rangle$
is a Frobenius complement for
$K$.
\end{proof}

We are now in a position to prove the following.
\begin{proposition}\label{prop:Frobjoin}
A Frobenius group is join-coherent if and only if it has prime degree.
\end{proposition}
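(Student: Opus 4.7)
The plan is to prove each direction separately. For the forward direction, suppose $G$ is a Frobenius group of prime degree $p$. Then the Frobenius kernel $K$ acts regularly on $p$ points and so is cyclic of order $p$, and by Lemma~\ref{lemma:CycFrob} the complement $H$ is cyclic. Given $g_1, g_2 \in G$, the join $\pi(g_1) \vee \pi(g_2)$ is the orbit partition of the subgroup $\langle g_1, g_2 \rangle$. Since $|K| = p$ is prime, any subgroup of $G$ either intersects $K$ trivially — in which case it is contained in a point stabilizer, hence cyclic, so its orbit partition is realised by any generator — or contains all of $K$, hence is transitive, so its orbit partition is $\{\Omega\}$ and is realised by any generator of $K$. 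Either way $\pi(g_1)\vee\pi(g_2) \in \pi(G)$, so $G$ is join-coherent.

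For the converse, suppose $G$ is a join-coherent Frobenius group on a set of size $n$. Since $G$ is finite and transitive, join-coherence forces $\{\Omega\} \in \pi(G)$, so $G$ contains a full cycle. Such a cycle is fixed-point free, hence lies in $K$, so $K$ is cyclic of order $n$; by Lemma~\ref{lemma:CycFrob}, $H$ is cyclic of some order $r$. Assume for contradiction that $n$ is composite, and let $p$ be any prime divisor of $n$, with $L \le K$ the unique subgroup of order $p$. The key step is to produce an element $h \in H$ that acts non-trivially on the quotient $K/L$. If $n$ has another prime divisor $q$, then $K$ has a cyclic factor $C_q$ which survives in $K/L$, and $H$ must act non-trivially there (else it would have fixed points in $K$). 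If instead $n = p^a$ with $a \ge 2$, then $H$ acts on $K = C_{p^a}$ as $x \mapsto x^t$, and if this action were trivial on $K/L$ we would have $t \equiv 1 \pmod{p^{a-1}}$; but then any non-trivial such automorphism fixes pointwise the unique subgroup of order $p^{a-1}$, contradicting the Frobenius property.

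Having chosen such an $h$, together with a generator $\sigma$ of $L$, I would identify $\pi(\sigma) \vee \pi(h)$ with the orbit partition of $L\langle h\rangle$ on $K$. The orbit of the identity is $L$, which has size $p$; and if $cL$ is a coset moved non-trivially by $h$, the orbit of $c$ contains at least two distinct cosets of $L$ and so has size at least $2p$. Thus the join partition has parts of two distinct sizes, both at least $2$, and in particular has no singleton. But every partition in $\pi(G)$ has either all parts of equal size (when $g \in K$, since $K$ acts regularly) or a unique singleton together with parts of equal size (when $g \in G \setminus K$, since $g$ has exactly one fixed point and the complement acts semiregularly); this contradicts join-coherence. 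The main obstacle is the prime-power case in the middle step, where one must rule out the situation in which $H$ acts on $K = C_{p^a}$ by an automorphism congruent to the identity modulo $L$.
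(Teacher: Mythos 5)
Your proof is correct and follows essentially the same route as the paper: the forward direction splits subgroups of $G$ according to whether they contain the kernel or meet it trivially (cyclic in the latter case), and the converse joins the coset partition of a nontrivial proper subgroup of the cyclic kernel with $\pi(h)$ for a nontrivial $h\in H$, then uses the fact that no element of a Frobenius group has an orbit partition with unequal part sizes and no singleton part. The only divergence is one step: where you locate a coset of $L$ moved by $h$ via a two-case analysis (extra prime divisor versus $n=p^a$), the paper obtains this uniformly from the coprimality of $|H|$ and $|K|$ given by Lemma~\ref{lemma:CycFrob}, which shows no proper coset can even be a union of $h$-orbits.
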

\begin{proof}
Let $G$ be a Frobenius group with kernel $K$ and complement $H$.
We identify the set on which $G$ acts with $K$.

Suppose that $G$ has prime degree $p$, and so $K \cong C_p$.
Let $X$ be a subgroup of $G$. If $K \le X$ then
$X$ is transitive, and its orbit partition is that of a generator of $K$. Otherwise
$X\cap K$ is trivial,
and since~$G/ K$
is abelian by Lemma~\ref{lemma:CycFrob}, it follows that~$X$ is also abelian.
If~$x$ is a non-identity element of $X$, then $x$ fixes a unique element $a$ of~$K$;
since~$X$
centralizes~$x$ we have $X\le\Stab_G(a)$.
By
 Proposition~\ref{prop:partitions}(3)
 the point stabilizers in $G$ act join-coherently
on $K$, and so there exists $h \in \Stab_G(a)$ such that the orbit
partition of $X$ is $\pi(h)$. Therefore $G$ itself is join-coherent, as required.

For the converse implication,
we observe that the non-identity elements of $K$ are precisely the elements of $G$
whose
orbit partition has no singleton parts. Hence if $G$ is join-coherent then $\{
\pi(k) \mid k \in K \}$
is closed under taking joins. Since the action of $G$ on $K$ is regular,
it follows from Proposition \ref{prop:regularjoin} that $K$ is cyclic.

Suppose for a contradiction that $|K|$ is composite. Then $K$ has a characteristic
non-trivial proper cyclic subgroup
$\langle k\rangle$. The orbit partition $\pi(k)$ 
is simply the
partition of~$K$ into the cosets of
$\langle k\rangle$. Let $h$ be a non-identity element of $H$. By Lemma
\ref{lemma:CycFrob} we see that $|H|$ is coprime with~$|K|$,
and hence with $|\langle k\rangle|$. Since $H$ acts semiregularly on $K\setminus
\{1\}$,
no proper coset of
 $\langle k\rangle$ in~$K$ can be a union of orbits of $h$.
Now consider the partition $\pi(k)\vee\pi(h)$. It has one part equal to
$\langle k\rangle$ itself, and every other part is a
union of two or more cosets of $\langle k\rangle$. But
this partition cannot be in $\pi(G)$, since it has parts of different sizes, but no
singleton parts. Hence~$G$
is not join-coherent. 
\end{proof}

\section{Join-coherence in linear groups}\label{section:linear}

In this section we let $V$ be a vector space of dimension $d$ over a field $K$.
We shall write~$\L$ for the set of lines in $V$.
Let $G$ be a group in the range $\SL(V)\le G\le\GL(V)$.
Then $G$ has a natural action on the non-zero points of $V$,
and the quotient $G/Z(G)$ acts on $\L$.
Since the lines of $V$ form a system of imprimitivity for the
action of $G$, we see by
Proposition~\ref{prop:imprimitive}(1) that
if $G$ acts join-coherently then so does $G/Z(G)$.
The main results
of this section, Proposition~\ref{prop:PGammaLd} and Proposition~\ref{prop:Vjoin},
show that if $d > 1$ then these actions are almost never join-coherent.
These results are used in our classification of
join-coherent primitive groups in Section~\ref{section:primitive}.

When $d=1$ we see that $G$ is a cyclic group acting semiregularly, and so
the action is join-coherent; of course the group $G/Z(G)$ is trivial in
this case. When $d > 1$ we shall see that it is possible to reduce to the case when
$V$ is a $2$-dimensional space; in this case
we identify
$\L$ with the set $K\cup\{\infty\}$, by identifying the line
through $(a, b) \in K^2$ with~$b/a$ when $a\neq 0$, and with~$\infty$ when $a=0$. The group $\PGL_2(K)$
acting on $\L$ may then be identified with the group of fractional linear
transformations 
\[
\alpha \mapsto \frac{a\alpha+b}{c\alpha+d},\quad a,b,c,d\in K,\ ad-bc\neq 0.
\]

The following fact is well known.

\begin{lemma}
The action of the group $\PGL_2(K)$ on $\L$ is sharply $3$-transitive.
\end{lemma}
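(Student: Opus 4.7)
The plan is to establish the two parts of sharp $3$-transitivity separately: first that $\PGL_2(K)$ is $3$-transitive on $\L$, and then that the stabilizer of any ordered triple is trivial.

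For $3$-transitivity, I would show that any ordered triple of distinct points of $\L$ can be mapped to the standard triple $(0,1,\infty)$. Given distinct $\alpha_1,\alpha_2,\alpha_3 \in K$, the cross-ratio map
\[
\alpha \mapsto \frac{(\alpha-\alpha_1)(\alpha_2-\alpha_3)}{(\alpha-\alpha_3)(\alpha_2-\alpha_1)}
\]
is a fractional linear transformation sending $\alpha_1 \mapsto 0$, $\alpha_2 \mapsto 1$, $\alpha_3 \mapsto \infty$; the obvious variants handle the cases when one of the $\alpha_i$ equals $\infty$. Composing the map taking $(\alpha_1,\alpha_2,\alpha_3)$ to $(0,1,\infty)$ with the inverse of the map taking $(\beta_1,\beta_2,\beta_3)$ to $(0,1,\infty)$ yields an element of $\PGL_2(K)$ sending any ordered triple of distinct points of $\L$ to any other.

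For sharpness, it suffices to show that the pointwise stabilizer of $\{0,1,\infty\}$ is trivial. If $g \colon \alpha \mapsto (a\alpha+b)/(c\alpha+d)$ fixes $\infty$, then $c=0$, so $g \colon \alpha \mapsto (a/d)\alpha + b/d$. Imposing $g(0)=0$ forces $b=0$, and then $g(1)=1$ forces $a=d$, so $g$ is the identity of $\PGL_2(K)$.

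No genuine obstacle is present; the only mild annoyance is the need to treat the point $\infty$ by a case split in the cross-ratio formula, which is entirely routine.
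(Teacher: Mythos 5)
Your proof is correct. The paper states this lemma as a well-known fact and offers no proof of its own, and your argument -- the cross-ratio map for $3$-transitivity together with the computation that an element fixing $0$, $1$ and $\infty$ must have $c=0$, $b=0$, $a=d$ and hence be trivial in $\PGL_2(K)$ -- is exactly the standard argument one would supply, valid over an arbitrary field $K$.
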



We shall write $K_0$ for the characteristic subfield of $K$, and $\L_0$
for the subset
$K_0\cup\{\infty\}$ of $\L$.
Our next proposition establishes that $\PGL_2(K)$ and $\PSL_2(K)$ are
join-coherent only in a few small cases.

\begin{lemma}\label{lemma:PGL2}
Let $K$ be a field, and let $G$ be in the range $\SL_2(K)\le G\le \GL_2(K)$.
Then $G/Z(G)$ is join-coherent in its action on $\L$ if and only if $G = \GL_2(\F_2)$
or $G = \GL_2(\F_3)$.
\end{lemma}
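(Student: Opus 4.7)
The plan is to treat the two directions separately, with the sufficiency direction following from a dimension count and the bulk of the work going into necessity via stabilizer analysis.

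For sufficiency, since $\PGL_2(K)$ acts sharply $3$-transitively on $\L$ by the preceding lemma, when $|K|\in\{2,3\}$ (so $|\L|\in\{3,4\}$) the induced group must be the full symmetric group $S_3$ or $S_4$ in its natural action, which is join-coherent. In these cases $\SL_2(\F_2)=\GL_2(\F_2)$, while for $|K|=3$ one must also exclude $\PSL_2(\F_3)\cong A_4$ acting on $4$ points, which fails to be join-coherent as noted in the introduction.

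For necessity, suppose $G/Z(G)$ is join-coherent on $\L$. By Proposition~\ref{prop:partitions}(3), the stabilizer $H$ of $\infty\in\L$ is join-coherent on $K=\L\setminus\{\infty\}$. Via the fractional-linear description, $H$ is contained in $\AGL_1(K)$ and always contains the image of the upper-triangular subgroup of $\SL_2(K)$, which acts on $K$ as $x\mapsto a^2 x + b$; in particular $H$ contains every translation and, provided $|K|\ge 4$, a non-trivial scaling fixing $0$. For finite $K=\F_q$ with $q\ge 4$, this makes $H$ a Frobenius group of degree $q$, and Proposition~\ref{prop:Frobjoin} forces $q$ to be prime. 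The case of infinite $K$ is ruled out by one further application of Proposition~\ref{prop:partitions}(3) to the stabilizer of $0\in K$ inside $H$: this subgroup acts regularly on $K^\times$, so Proposition~\ref{prop:regularjoin} forces $K^\times$ to be locally cyclic, and a direct obstruction eliminates the few remaining infinite cases.

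The residual task, and the main obstacle, is to rule out $K=\F_p$ with $p\ge 5$: here $\AGL_1(\F_p)$ is Frobenius of prime degree and hence already join-coherent, so the failure of join-coherence must come from elements moving $\infty$. The plan is to exhibit two elements $g,h\in G/Z(G)$ whose join partition $\pi(g)\vee\pi(h)$ has an orbit-size pattern incompatible with the cycle types of any single element of $G/Z(G)$, using the constraint that elements of $\PGL_2(\F_p)$ have orders dividing $p$, $p-1$, or $p+1$. For the small primes this incompatibility is verified computationally in Magma, as foreshadowed in the introduction.
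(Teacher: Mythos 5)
Your sufficiency argument and the reduction of the finite non-prime case are fine (the observation that the stabilizer of $\infty$ is a Frobenius group of degree $q$ for $q\ge 4$, so that Proposition~\ref{prop:Frobjoin} forces $q$ prime, is a pleasant alternative to the paper's route). But the two cases you leave open are precisely the substance of the lemma, and as written they are genuine gaps. First, the case $K=\F_p$ with $p\ge 5$: you state only a \emph{plan} to exhibit $g,h$ whose join has an orbit-size pattern incompatible with the possible cycle types, and you defer ``small primes'' to Magma. Since there are infinitely many primes $p\ge 5$, computation cannot close this, and no general pair $g,h$ is produced. The paper's proof supplies the missing construction: when $G$ contains an element of determinant $-1$ (so in particular for $\GL_2(\F_p)$), the maps $\alpha\mapsto 1/\alpha$ and $\alpha\mapsto 1-1/\alpha$ generate a copy of $S_3$ whose join has parts of size $6$ once $|K|\ge 5$, while sharp $3$-transitivity forces any element realizing that join to cube to the identity; and for groups without determinant $-1$ (e.g.\ $\SL_2(\F_p)$) a Singer-element determinant argument shows no element of $G$ is transitive on $\L_0$, although join-coherence would require one. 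Something of this kind is needed for all $p\ge 5$, not just those within computational reach.

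Second, the infinite case is not closed by your local-cyclicity obstruction. Granting the stabilizer analysis (note that the two-point stabilizer acts regularly on $K^\times$ only when $G$ contains the full diagonal torus; for $G=\SL_2(K)$ it acts semiregularly with orbits the cosets of $(K^\times)^2$, so Proposition~\ref{prop:regularjoin} must be applied after restricting to a single orbit, and it then constrains $(K^\times)^2$ rather than $K^\times$), the conclusion ``$K^\times$ locally cyclic'' does eliminate characteristic $0$ and transcendental extensions, but it says nothing about infinite algebraic extensions of $\F_p$: for $K=\overline{\F_p}$, or any infinite algebraic extension, $K^\times$ \emph{is} locally cyclic and every element is a square, so no contradiction arises. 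These are infinitely many fields, not ``a few remaining cases,'' and your unspecified ``direct obstruction'' is exactly the missing argument. (One fix within your framework: for infinite $K$ of characteristic $p$, take translations $x\mapsto x+1$ and $x\mapsto x+b$ with $b\notin\F_p$ inside the stabilizer of $\infty$; their join has all parts of size $p^2$ apart from the singleton $\{\infty\}$, but any element of $G/Z(G)$ realizing it would be a fixed-point-free affine map, hence a translation, whose nontrivial orbits have size $p$. The paper instead disposes of these fields through its determinant-$-1$ and Singer-element cases.) Until both of these cases are argued in full, the necessity direction is not established.
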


\begin{proof}
Suppose first of all that $G$ contains elements of determinant $-1$. (This is always
the case if $\characteristic K = 2$.)
Then $\PGL_2(K)$ has a subgroup isomorphic to $S_3$, generated by
elements $g$ and $h$ which act on $\L$ as
$g : \alpha \mapsto 1/\alpha$ and $h : \alpha \mapsto 1- 1/\alpha$. 
The parts of $\pi(g)\vee\pi(h)$ have the form
\[
\{\alpha, 1/\alpha, 1- \alpha, 1-1/\alpha, \alpha/(\alpha-1), (\alpha-1)/\alpha \}
\]
for $\alpha \in K$.
One part is $\{0,1,\infty\}$.
 If $\characteristic K = 3$ then $\{-1\}$ is a singleton part, and
if $\characteristic K \ge 5$ then $\{-1,2,1/2\}$ is a part of size $3$.
If $K$ contains primitive cube roots of $1$ then they form a part of size $2$.
All other parts have size~$6$.
If $k\in\PGL_2(K)$ has orbit partition $\pi(g) \vee \pi(h)$
then~$k^3$ has at least $3$ fixed points; since $\PGL_2(K)$ is sharply
$3$-transitive, it follows that \hbox{$k^3 = 1$}. Therefore $|K| \le 3$.
The well-known isomorphisms
$\PGL_2(\F_2) \cong S_3$ and $\PGL_2(\F_3) \cong S_4$
now give the join-coherent groups appearing in the lemma.


To deal with the remaining case
it will be useful to observe that if $k \in \PGL_2(K)$ fixes
$\L_0$ set-wise
then there exist $x,y,z \in \L_0$ whose images under~$k$ are
$0,1,\infty$, respectively; since $\PGL_2(K)$ is sharply $3$-transitive
it follows that~$k$ is the map
\[ 
\alpha \longmapsto \frac{(y-z)(\alpha-x)}{(y-x)(\alpha-z)}, \]
and so $k \in \PGL_2(K_0)$.

Suppose that $\characteristic K > 0$ and that $G$ has no elements
of determinant $-1$. It is clear that $G/Z(G)$ acts transitively
on $\L_0$. Suppose that $t \in G$ 
has a single orbit on $\L_0$.
Then $t$ lies in the subgroup of $\GL_2(\F_p)$ generated by a Singer
element $s \in \GL_2(\F_p)$ of order $p^2-1$.
The determinant of $s$ is a generator of $\F_p^\times$ and hence $(\det s)^{(p-1)/2} = -1$.
By assumption $G$ has no elements of determinant $-1$, and so if $p-1 = 2^a c$
where $c$ is odd, then $t \in \left< s^{2^a} \right>$.
However, it is clear that $s$ acts as a $(p+1)$-cycle on $\L_0$, and so~$s^2$
has two orbits on $\L_0$. Hence no such element $t$ can exist,
and so $G$ is not join-coherent in its action on $\L$.

%
Finally suppose that $\characteristic K = 0$.
It is easy to show that $G$ contains elements $g$ and $h$ such that
$\alpha g = 4\alpha$, and $\alpha h = 9\alpha$ for all $\alpha \in K$.
(Here~$4$ and~$9$ may be replaced with any two squares that generate
a non-cyclic subgroup of $\Q^\times$.)
Suppose that $t \in G$ is such that $\pi(t) = \pi(g) \join \pi(h)$.
Since $t$ has both $0$ and $\infty$ as fixed points, it easily follows that
there exists $x \in \Q$ such
that $\alpha t = x\alpha$ for all $\alpha \in K$. The orbit
of $\left<g,h\right>$ on~$\Q \cup \{\infty\}$
containing $1$ is $\{4^i 9^j : i,j \in \Z\}$,
whereas the orbit of $\left< t \right>$ on $\Q \cup \{\infty\}$
containing $1$ is $\{ x^i : i \in \Z \}$.
It is clear that these sets are not equal for any choice of $x \in \Q$.
Hence $G/Z(G)$ is not join-coherent in its action on~$\L$.
\end{proof}

It is worth noting that $\PSL_2(\Q)$ does
contain finitely generated subgroups that are transitive on $\Q \cup \{\infty\}$.
For example, one such subgroup is generated by the maps
$g: \alpha \mapsto \alpha+1$ and $h:\alpha \mapsto \alpha/(\alpha+1)$
used to define the Calkin--Wilf tree of rational numbers
\cite{CalkinWilf}. It can be shown, however, that no
element $k \in \PGL_2(\Q)$ acts transitively
on $\Q \cup \{\infty\}$; this gives an alternative
way to conclude the proof of Lemma~\ref{lemma:PGL2}.

Lemma \ref{lemma:PGL2} is the basis for the following more general
statement.

\begin{proposition}\label{prop:PGLd}
Let $V$ be a vector space of dimension $d$ over a field $K$, where
$d \ge 2$. Let~$G$ be in the range 
$\SL(V)\le G\le \GL(V)$. Then the action of $G/Z(G)$ on the lines of~$V$
is not join-coherent unless
$d=2$ and $|K|\le 3$.
\end{proposition}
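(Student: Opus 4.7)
The plan is to reduce to a small-dimensional case via a setwise-stabilizer argument, and then to exhibit a specific obstruction in dimension $3$.

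The central reduction is the following: if $G/Z(G)$ acts join-coherently on $\L$ and $W \subseteq V$ is a subspace of dimension $w$ with $2 \le w \le d$, then $\PGL_w(K)$ acts join-coherently on the set $S_W \subseteq \L$ of lines contained in $W$. To see this, the setwise stabilizer of $W$ in $G$ projects to the setwise stabilizer of $S_W$ in $G/Z(G)$. Because $\SL(V) \le G$, any $A \in \GL(W)$ extends to an element of $G$ that fixes $W$ setwise: if $W \ne V$, pick a complement $W'$ and form the block-diagonal element with $W'$-block of determinant $(\det A)^{-1}$; the case $W = V$ is trivial. So the induced action on $S_W$ is the full group $\PGL(W) \cong \PGL_w(K)$. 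By Proposition~\ref{prop:partitions}(2) this setwise stabilizer is join-coherent on $\L$, and since $S_W$ is a union of its orbits the restriction to $S_W$ is also join-coherent, because the join of two orbit partitions is given by the orbits of the subgroup they generate, and invariant subsets restrict cleanly.

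Applied with $w = 2$, this reduction together with Lemma~\ref{lemma:PGL2} forces $|K| \le 3$. To conclude, I need to rule out $d \ge 3$; assuming it, I apply the reduction with $w = 3$ and obtain that $\PGL_3(\F_p)$ acts join-coherently on the projective plane over $\F_p$, for some prime $p \in \{2, 3\}$. I would exhibit an obstruction using two commuting transvections $g, h \in \GL_3(\F_p)$ fixing the hyperplane $\langle e_1, e_2 \rangle$ pointwise but translating along different directions: take $g$ with $g e_3 = e_1 + e_3$ and $h$ with $h e_3 = e_2 + e_3$. Their projective images generate a subgroup isomorphic to $C_p \times C_p$ which fixes each of the $p + 1$ lines contained in $\langle e_1, e_2 \rangle$ and acts regularly by translation on the remaining $p^2$ lines of $\F_p^3$. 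Hence the join of the orbit partitions of these two transvections has a part of size $p^2$.

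The main step, and the expected obstacle, is to show that this join partition cannot be realized by any single element of $\PGL_3(\F_p)$. Any such element $k$ would have to fix each of the $p + 1$ lines of $\langle e_1, e_2 \rangle$, forcing every lift of $k$ to $\GL_3(\F_p)$ to act as a scalar on $\langle e_1, e_2 \rangle$: a linear map on a $2$-dimensional space for which every line is an eigenline must be scalar. After rescaling so that this scalar equals $1$, the lifted matrix is upper triangular in the basis $(e_1, e_2, e_3)$ with diagonal entries $1, 1, c$. It is therefore either of the form $I + N$ with $N^2 = 0$ (when $c = 1$), hence of order dividing $p$ in characteristic $p$, or diagonalizable with eigenvalues $1, 1, c$ (when $c \ne 1$), hence of order dividing $p - 1$. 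In either case the order of $k$ in $\PGL_3(\F_p)$ is not divisible by $p^2$, so $k$ admits no orbit of size $p^2$. This contradiction forces $d = 2$, completing the proof.
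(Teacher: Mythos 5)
Your proof is correct, and while the reduction step is the same as the paper's, your treatment of the residual case $d\ge 3$, $|K|\le 3$ is genuinely different. The paper also passes to the set-stabilizer of a $2$-dimensional subspace to invoke Lemma~\ref{lemma:PGL2} and force $|K|\le 3$, but it then disposes of the $3$-dimensional groups $\PGL_3(\F_2)$, $\PSL_3(\F_3)$ and $\PGL_3(\F_3)$ by a machine computation. You replace that computation with an explicit obstruction: two commuting elations with common axis $\langle e_1,e_2\rangle$ generate a group whose orbit partition on the projective plane has $p+1$ singletons and one part of size $p^2$, while any $k\in\PGL_3(\F_p)$ fixing all $p+1$ points of a line lifts to a matrix acting as a scalar on the corresponding plane, hence (after rescaling) is either unipotent of order dividing $p$ or diagonalizable of order dividing $p-1$, so it has no orbit of size $p^2$. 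This is a clean, self-contained argument; it works uniformly for any finite field and for every group between $\PSL_3$ and $\PGL_3$ (the transvections lie in $\PSL_3$ and the order bound applies to all of $\PGL_3$), so it buys independence from computer verification, which the paper's proof does not have. One small inaccuracy worth noting: your claim that the induced group on the lines of $W$ is the full $\PGL(W)$ fails when $W=V$ (e.g.\ $G=\SL(V)$ induces only $\PSL(V)$); this is harmless, since Lemma~\ref{lemma:PGL2} already covers the whole range $\SL_2\le G\le\GL_2$ when $w=d=2$, and your $3$-dimensional obstruction, as just observed, rules out every group in the range rather than only the full projective group.
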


\begin{proof} Let $W$ be a 
subspace of $V$, and let~$G_W$ be the
set-stabilizer of $W$ in~$G$.
By Proposition~\ref{prop:partitions}(2), if
$G$ is join-coherent in its projective action on the lines of $V$,
then $G_W$ is join-coherent on the lines of~$W$.

Take $W$ to be a $2$-dimensional subspace of $V$. By
Lemma~\ref{lemma:PGL2}, applied to $G_W$,
we see that~$G$ cannot be join-coherent unless $|K|\le 3$.
Suppose that $|K| \le 3$ and $d \ge 3$. Take $W$ to
be a $3$-dimensional subspace of $V$.
The possibilities for $G_W / Z(G_W)$ are $\PGL_3(\F_2)$ and $\PSL_3(\F_3)$
and $\PGL_3(\F_3)$. A straightforward computation, using the software mentioned
in the introduction, shows that none
of these groups is join-coherent, and so if
 $d\ge 3$ then~$G/Z(G)$ is not join-coherent.
\end{proof}

Let $\Phi$ be a non-trivial group of automorphisms of the field $K$, and
let $G$ be a group such that
$\SL(V)\le G\le \GL(V)$.
Then $G\cdot \Phi$ acts on the
non-zero points of $V$, and
$(G\cdot \Phi)/Z(G)$ acts on the set $\L$ of lines.

\begin{lemma}\label{lemma:PGammaL2}
Let $\Phi$ be a non-trivial group of automorphisms of a finite field~$K$.
Let $G$ be in the range
$\SL_2(K)\le G\le \GL_2(K)$.
Then $(G\cdot\Phi)/Z(G)$ is join-coherent on $\L$ if and only if $K=\F_4$, $G=\SL_2(\F_4)$
or $G = \GL_2(\F_4)$,
and $\Phi=\Gal(\F_4:\F_2)$.
\end{lemma}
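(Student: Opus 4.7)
The plan begins by handling the ``if'' direction, which is immediate: $\PGL_2(\F_4) \cong \SL_2(\F_4) \cong A_5$ acts on $\L = \P^1(\F_4)$ as the alternating group on five points, while the non-trivial element of $\Phi = \Gal(\F_4:\F_2)$ acts as the transposition swapping the conjugate pair $\omega, \omega^2 \in \F_4 \setminus \F_2$ and fixing $0, 1, \infty$. Hence $(G\cdot\Phi)/Z(G) \cong S_5$ in its natural action on five points, which is join-coherent.

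For the ``only if'' direction, set $H = (G\cdot\Phi)/Z(G) \le \PGammaL_2(K)$ and assume $H$ is join-coherent on $\L$. First I would establish that $\Phi = \Gal(K:K_0)$ and $p \in \{2,3\}$. Since $\Phi$ fixes the subset $\L^\Phi := \P^1(K^\Phi)$ pointwise, Proposition~\ref{prop:partitions}(2) implies the set-stabilizer of $\L^\Phi$ in $H$ acts join-coherently on $\L^\Phi$ via a subgroup of $\PGL_2(K^\Phi)$ containing $\PSL_2(K^\Phi)$. Combining Lemma~\ref{lemma:PGL2} with the direct verification that $\PSL_2(\F_q)$ is not join-coherent on $\P^1(\F_q)$ for $q \ge 4$ (for instance, $A_5 = \PSL_2(\F_4)$ fails because the join of $(1\,2)(3\,4)$ and $(1\,3)(2\,4)$ has a block of size $4$, requiring a $4$-cycle absent from $A_5$), I obtain $|K^\Phi| \le 3$, which forces $K^\Phi = K_0$. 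Then I would induct on $[K:K_0]$: for any proper intermediate subfield $K_0 < K_1 < K$, the set-stabilizer of $\L_1 := \P^1(K_1)$ in $H$ acts on $\L_1$ as a subgroup of $\PGammaL_2(K_1)$ of the same form $(G'\cdot\Phi')/Z(G')$ covered by this lemma, so by induction $K_1 = \F_4$. This reduces the problem to the base cases $K = \F_{3^\ell}$ for $\ell$ prime, $K = \F_{2^\ell}$ for $\ell$ odd prime, and $K = \F_{4^\ell}$ for $\ell$ prime.

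The main obstacle is ruling out these base cases. Following the template of Lemma~\ref{lemma:PGL2}, I would take $g:\alpha\mapsto 1/\alpha$ and $h:\alpha\mapsto 1-1/\alpha$, which generate an $S_3 \le \PGL_2(K)$ since $p \in \{2,3\}$. Any $k \in H$ with $\pi(k) = \pi(g)\vee\pi(h)$ must act as a $3$-cycle on $\{0,1,\infty\}$, so $k^3$ fixes these three points pointwise; sharp $3$-transitivity of $\PGL_2(K)$ on $\L$ then forces $k^3 \in \Phi$, and hence $\mathrm{ord}(k) \mid 3|\Phi|$. Since $\pi(g)\vee\pi(h)$ contains orbits of size $6$ on the generic $S_3$-orbit, $\mathrm{ord}(k) = 6$, and this is incompatible with $\mathrm{ord}(k) \mid 3|\Phi|$ whenever $3|\Phi|$ is odd; this rules out $K = \F_{27}$ and $K = \F_{p^\ell}$ for $p \in \{2,3\}$ and $\ell$ odd prime with $\ell \ge 3$. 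The residual cases $K = \F_9$ and $K = \F_{4^\ell}$ for $\ell$ prime I would settle by enumerating the conjugacy classes of elements of $\PGammaL_2(K)$ of order dividing $3|\Phi|$ and checking that none realises the required cycle type; for small $K$ this is a direct Magma computation of the kind referenced in the introduction, while for the infinite family $\F_{4^\ell}$ a structural argument via the point-stabilizer action on $K$ (exploiting that $\AGL_1(K)$ is a Frobenius subgroup of $\Stab_H(\infty)$ of non-prime degree $|K|$, whose translations furnish joins not realisable in $\PGammaL_2(K)$) completes the proof.
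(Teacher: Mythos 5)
Your ``if'' direction and your opening reduction (using Proposition~\ref{prop:partitions}(2) together with Lemma~\ref{lemma:PGL2} to force $K^\Phi=K_0$ and $p\le 3$) are sound, but the remainder has genuine gaps. First, the induction over intermediate subfields does not close: the permutation group induced on $\L_1=K_1\cup\{\infty\}$ by the set-stabilizer in $(G\cdot\Phi)/Z(G)$ is a join-coherent subgroup of $\PGammaL_2(K_1)$ containing $\PSL_2(K_1)$, but it need not have the product form $(G'\cdot\Phi')/Z(G')$ that your inductive hypothesis (this very lemma) requires; ``twisted'' subgroups, such as the $M_{10}$-type subgroup of $\PGammaL_2(\F_9)$ generated by $\PSL_2(\F_9)$ and the product of a nonsquare-determinant element with the Frobenius, are covered neither by Lemma~\ref{lemma:PGL2} nor by the statement being proved. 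Second, and more seriously, the cases your $S_3$-parity argument leaves open (all even-degree extensions, including the infinite family you label $\F_{4^\ell}$) are exactly where the substance of the lemma lies, and you dispose of them only with an unproven parenthetical (``translations furnish joins not realisable in $\PGammaL_2(K)$'') plus machine computation for small fields. Making that hint precise is the missing theorem: joining the translations inside a point stabilizer demands an element acting as a full $p^r$-cycle on the affine line, so one must prove that $\PGammaL_2(\F_{p^r})$ contains no $p$-element of order $p^r$. Minor further defects: your $g:\alpha\mapsto 1/\alpha$ need not lie in $H$ when $G=\SL_2(K)$ and $-1$ is a nonsquare (e.g.\ $K=\F_{27}$), so the $S_3$ must be chosen inside $\PSL_2(K)\cdot\Phi$ there; and ``the order of $k$ equals $6$'' should read ``$6$ divides the order of $k$'' (harmless for your parity contradiction).

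For comparison, the paper's proof is precisely the sharpened form of your final hint and needs neither the subfield induction nor any computation: the stabilizer $H$ of a line is transitive on the remaining $p^r$ lines (by $2$-transitivity of $\PSL_2(K)$) and join-coherent by Proposition~\ref{prop:partitions}(3), hence contains an element with a single orbit of length $p^r$; but writing $r=p^am$ with $p\nmid m$, any $p$-element $g$ of $G\cdot\Phi$ satisfies $g^{p^a}\in G$, and a non-trivial unipotent element of $\GL_2(K)$ has order $p$, so $g$ has order at most $p^{a+1}$, which is smaller than $p^r$ unless $p=2$ and $r=2$. This single order bound collapses the whole ``only if'' direction, leaving only the identification of $\PGL_2(\F_4)\cdot\Gal(\F_4:\F_2)$ with $S_5$. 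If you supply that bound, the inductive scaffolding and the residual case analysis in your proposal become unnecessary.
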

\begin{proof}
Let $|K|=p^r$ where $p$ is prime. Since $K$ admits non-trivial
automorphisms, we must have $r>1$.
Let $H$ be the stabilizer in $G \cdot \Phi$ of a distinguished line $\ell \in \L$.
Since the action of $\PSL_2(K)$ on $\L$ is $2$-transitive,
the action of $H$ on $\L \backslash \{ \ell \}$ is transitive.
If $G \cdot \Phi$ is
join-coherent on~$\L$,
then by Proposition \ref{prop:partitions}(3), so is~$H$.
It follows that
$H$ must contain an element of order $p^r$.
Let $r=p^am$ where $p$ does not divide~$m$. Let $g\in G \cdot \Phi$ be a $p$-element.
Since the full automorphism
group of $K$ has order $r$, we see that $g^{p^a} \in G$. 
But a
non-trivial unipotent element of $\GL_2(K)$ has order
$p$, and so $g^{p^{a+1}} = 1$. Hence the order of $g$ is at
most $p^{a+1}$. However, $p^{a+1}$ is strictly less than $p^r$,
except in the case when $a=1$, $r=2$ and $p=2$, and so $p^r=4$.

When $K=\F_4$ we observe that since every element in $\F_4$ is a square,
$\PGL_2(\F_4) = \PSL_2(\F_4)$. Moreover,
$\PGL_2(\F_4) \cdot \Gal(\F_4 : \F_2)$
is isomorphic to the symmetric
group $S_5$, in its standard action on $5$ points, and is
therefore join-coherent.
\end{proof}

The principal difficulty in extending Lemma~\ref{lemma:PGammaL2} to general fields
comes from simple transcendental extensions of $\F_p$ for small primes $p$.
Let $K = \F_p(x)$, where $x$ is a transcendental element.
We shall represent elements of $K\cup \{\infty\}$
as rational quotients $P(x)/Q(x)$, where not both $P(x)$ and $Q(x)$ are zero,
taking the quotients in which $Q(x) = 0$ to represent $\infty$.
Recalling our earlier identification of $K\cup\{\infty\}$ with the projective line $\L$, this gives
a convenient representation for the points of $\L$.
In this representation, the fractional linear transformation
 $\alpha \mapsto (A\alpha + B)/(C\alpha + D)$ in $\PGL_2(K)$
acts by
\[ \frac{P(x)}{Q(x)} \mapsto \frac{A P(x) + B Q(x)}{CP(x) + DQ(x)}. \]
It is straightforward to check that this is a well-defined action of
$\PGL_2(K)$ on $\L$.



\begin{lemma}\label{lemma:PGLPGL}
Let $p$ be prime and let $K = \F_p(x)$, where $x$ is a transcendental element.
Let $\Phi = \Gal(K : \F_p)$ and let $H = \PGL_2(K)$.
There is an action of $H \cdot \Phi$ on $\L$ defined by
\[ h\phi : \frac{P(x)}{Q(x)} \rightarrow \left( \frac{P(x\phi)}{Q(x\phi)} \right) h.
\]
In this action, $H\cdot \Phi$ acts regularly on the orbit containing $x^{p+1} + x^p$.
\end{lemma}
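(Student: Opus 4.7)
The proof naturally splits into two parts: verifying that the given formula defines a well-defined group action of $H\cdot\Phi$ on $\L$, and then showing that the stabilizer of $\alpha = x^{p+1}+x^p$ in this action is trivial, so that $H\cdot\Phi$ acts regularly on the orbit containing $\alpha$.

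For the first part, I would observe that $\phi \in \Phi$ is a field automorphism of $K$, so the assignment $P/Q \mapsto P(x\phi)/Q(x\phi)$ coincides with the image of the element $P/Q \in K$ under $\phi$, and is hence independent of the choice of representatives $P$ and $Q$. The element $h$ then acts by the fractional linear transformation of $\PGL_2(K)$ on the resulting element of $K$. The associativity of the action reduces to a routine calculation showing that composition matches the semidirect-product structure of $H \rtimes \Phi$, in which $\phi$ acts on $H$ by applying itself to matrix entries---the standard construction underlying $\PGammaL$-type groups.

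For the regularity statement, suppose $h\phi$ fixes $\alpha = x^{p+1}+x^p = x^p(x+1)$. Write $x\phi = (ax+b)/(cx+d)$ with $a,b,c,d \in \F_p$ and $ad-bc \ne 0$, and let $h$ correspond to the matrix $\bigl(\begin{smallmatrix}A&B\\C&D\end{smallmatrix}\bigr)$ with entries in $K$. The crucial observation is the Frobenius identity $(x\phi)^p = (ax^p+b)/(cx^p+d)$, valid because $a,b,c,d \in \F_p$. This yields the explicit expression
\[
\alpha\phi \;=\; (x\phi)^p\bigl((x\phi)+1\bigr) \;=\; \frac{(ax^p+b)\bigl((a+c)x+(b+d)\bigr)}{(cx^p+d)(cx+d)},
\]
whose numerator and denominator are polynomials in $x$ supported only on the monomials $\{x^{p+1}, x^p, x, 1\}$.

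The fixed-point condition $(\alpha\phi)h = \alpha$ becomes, after clearing denominators, a polynomial identity in $K[x]$. Comparing coefficients of the extremal monomials on each side, and using the sparse support of $\alpha\phi$ together with the fact that $\alpha$ has support only on $\{x^{p+1}, x^p\}$, produces a rigid system of equations. I would then run a case analysis according to whether $c = 0$, in which case $\phi$ is affine, or not. In each case the top-degree coefficients force $C = 0$ and $B = 0$, and the remaining equations then yield $a = 1$, $b = 0$, and $A = D$, so that $\phi = 1_\Phi$ and $h$ is the identity of $\PGL_2(K)$. The main obstacle is handling the case analysis exhaustively; the choice of $\alpha = x^p(x+1)$ is essential here because its derivative $x^p$ is a $p$-th power, producing the sparse monomial structure that closes off every non-trivial combination of $\phi$ and $h$.
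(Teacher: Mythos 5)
Your proposal is correct in the same sense as, and essentially identical in method to, the paper's own proof: well-definedness is dispatched by the same remark that $\phi$ is a field automorphism acting through the representation $P/Q$, and regularity is reduced to the same fixed-point equation, handled via the same Frobenius computation $\alpha\phi = (ax^p+b)\bigl((a+c)x+(b+d)\bigr)/\bigl((cx^p+d)(cx+d)\bigr)$ followed by clearing denominators and comparing coefficients of powers of $x$. Your case split on $c=0$ versus $c\neq 0$ is merely one way of organizing the final elimination, which the paper likewise leaves as a routine verification (``it is now easy to show''), so the two arguments coincide in substance.
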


\begin{proof}
The remarks made immediately before the lemma show that the action is well-defined.
Let $P(x) = x^{p+1} + x^p$. It suffices to show that if $P(x)h = P(x\phi)$ for
\hbox{$h\in H$} and $\phi\in\Gal(K :\F_p)$, then $h$ and $\phi$ are the identities
in their respective groups.
It is clear that $\phi$ is determined by its effect on $x$, and it is well known that
\[ x \phi = \frac{ax+b}{cx+d} \]
for some $a,b,c,d \in \F_p$ with $ad-bc\not=0$. (In fact $\Gal(K : \F_p) \cong
\PSL_2(\F_p)$, and so we have two distinct projective linear groups, acting in
different ways on~$\L$.)
Suppose that
\[
P(x\phi)= \frac{ (ax+b)^{p+1} }{ (cx+d)^{p+1} } + \frac{ (ax+b)^p }{ (cx+d)^p }
= \frac{ A(x^{p+1} + x^p) + B }{ C(x^{p+1} + x^p) + D } = Ph.
\]
Then using the fact that $(rx+s)^p = rx^p + s$ for any $r$, $s\in K_0$, we have
\[
\begin{split}
(ax^p +b)\bigl( (a+c)x + {}&{} (b+d) \bigr)(Cx^{p+1} + Cx^p + D)\\
&= (cx^p +d)(cx+d)(Ax^{p+1} + Ax^p + B).
\end{split}
\]
By comparing the coefficients of $x$ on both sides of this equation,
starting with the constant and linear terms, it is now easy to show that $h$ and $\phi$ are the identity maps.
%
%
\end{proof}

We are now ready to prove our main result on the action of extended linear groups
on lines.

\begin{proposition}\label{prop:PGammaLd}
Let $\Phi$ be a non-trivial group of automorphisms of a field~$K$. Let~$V$
be a $d$-dimensional space over~$K$, and
let $G$ be in the range $\SL(V)\le G\le \GL(V)$. If \hbox{$(G\cdot\Phi)/Z(G)$} is
join-coherent on the lines of $V$, then
$K=\F_4$, $G = \SL_2(\F_4)$ or $G=\GL_2(\F_4)$, and \hbox{$\Phi=\Gal(\F_4:\F_2)$}.
\end{proposition}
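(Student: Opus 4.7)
The plan is to mimic the strategy of Proposition~\ref{prop:PGLd}: first reduce to dimension $2$ by a subspace argument, and then apply Lemma~\ref{lemma:PGammaL2} in the case $K$ is finite, or adapt Lemma~\ref{lemma:PGLPGL} when $K$ is infinite.

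For the reduction, fix a $K$-basis $e_1, \dots, e_d$ of $V$ and let $W = \langle e_1, e_2\rangle$. Since $\Phi$ acts component-wise with respect to this basis, $W$ is $\Phi$-invariant, and the subset $\L(W) \subseteq \L$ of lines contained in $W$ is preserved by the set-stabilizer of $W$ in $(G \cdot \Phi)/Z(G)$. Let $H$ denote this set-stabilizer. By Proposition~\ref{prop:partitions}(2), $H$ is join-coherent on~$\L$. Since each orbit of any $h \in H$ on $\L$ either lies in $\L(W)$ or is disjoint from it, the action of $H$ on $\L(W)$ obtained by restriction is also join-coherent. Taking block-diagonal matrices $\mathrm{diag}(A, I_{d-2})$ with $A \in \SL_2(K)$ shows that the image of $H$ in the symmetric group on $\L(W)$ contains the projective image of $\SL_2(K) \cdot \Phi$. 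Thus join-coherence of $(G\cdot\Phi)/Z(G)$ on $\L$ implies join-coherence of a group containing $(\SL_2(K) \cdot \Phi)/\{\pm I\}$ on the lines of a $2$-dimensional $K$-space, and we are reduced to the case $d=2$.

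When $K$ is finite, Lemma~\ref{lemma:PGammaL2} immediately yields $K = \F_4$, $\Phi = \Gal(\F_4:\F_2)$, and $\SL_2(\F_4) \le G \le \GL_2(\F_4)$. To rule out $d \ge 3$ with $K = \F_4$, I would repeat the subspace reduction with $W$ of dimension $3$, reducing to the claim that $(\SL_3(\F_4) \cdot \Gal(\F_4:\F_2))/Z$ is not join-coherent on the $21$ lines of $\F_4^3$, and verify this by computer in the spirit of the closing paragraph of the proof of Proposition~\ref{prop:PGLd}.

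When $K$ is infinite, the task is to show that $(\SL_2(K) \cdot \Phi)/\{\pm I\}$ is not join-coherent on $K \cup \{\infty\}$. For $K = \F_p(x)$ with $\Phi \ge \Gal(K:\F_p)$, Lemma~\ref{lemma:PGLPGL} gives regularity of $\PGL_2(K) \cdot \Phi$ on the orbit of $x^{p+1}+x^p$; picking $g_1 \in \PGL_2(K)$ and $g_2 = \phi \in \Phi$ so that $\langle g_1, g_2\rangle$ is non-cyclic then yields a partition $\pi(g_1) \vee \pi(g_2)$ with a part on which no single element of $\PGL_2(K) \cdot \Phi$ can act transitively, since regularity forces each cyclic subgroup's orbit length to equal the order of its generator. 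For general infinite $K$, I would descend to a finitely generated $\phi$-stable subfield of the form $\F_p(y)$ or $\Q(y)$ when one is available, and otherwise construct an analogue of the generic element $x^{p+1}+x^p$ directly in $K$. The hardest subcase is when $K$ is an infinite algebraic extension of $\F_p$ and $\phi$ acts as a power of the Frobenius: here there is no transcendental element to play the role of $x$, and the regular-orbit construction must be performed by an arithmetic argument exploiting the specific structure of~$K$.
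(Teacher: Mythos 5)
Your reduction to $d=2$ (including the computational disposal of $d\ge 3$ over $\F_4$) and your use of Lemma~\ref{lemma:PGammaL2} for finite $K$ follow the paper's route, but the infinite case contains genuine gaps. The device the paper relies on, and which you never invoke, is to restrict to the lines $\L_E$ lying over a subfield $E$ via Proposition~\ref{prop:partitions}(2), starting with $E$ equal to the prime subfield $K_0$: every field automorphism fixes $K_0$ pointwise, so $\Phi$ acts trivially on $\L_0$, the induced group lies between $\PSL_2(K_0)$ and $\PGL_2(K_0)$, and Lemma~\ref{lemma:PGL2} forces $|K_0|\le 3$. That single step eliminates characteristic $0$ entirely (via the characteristic-zero part of Lemma~\ref{lemma:PGL2} applied over $\Q$). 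Your outline has nothing in its place: for an infinite field of characteristic $0$ (say $K=\Q(\sqrt 2)$ or $K=\mathbf{C}$ with complex conjugation) there is no transcendental over $\F_p$, Lemma~\ref{lemma:PGLPGL} is unavailable, and your proposed ``analogue of $x^{p+1}+x^p$ constructed directly in $K$'' is neither constructed nor supplied by any lemma in the paper, so that case is simply open in your argument. Likewise the case you label hardest, $K$ an infinite algebraic extension of $\F_p$, needs no ``arithmetic argument'': such a $K$ contains finite subfields $E$ of unbounded order, each automatically giving an invariant line set $\L_E$ on which the induced join-coherent group lies between $\PSL_2(E)$ and $\PGammaL_2(E)$, and the $p$-element order count in Lemmas~\ref{lemma:PGL2} and~\ref{lemma:PGammaL2} rules this out once $|E|\ge 5$; this is how the brief statement in the paper's proof (which passes from ``algebraic over $\F_p$'' to ``finite'') should be read.

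A second, related problem is your handling of the transcendental case. You assume $\Phi\supseteq\Gal(K:\F_p)$ and seek a ``$\phi$-stable'' subfield $\F_p(y)$; neither is needed, and the latter may not exist. The restriction argument applies to the set-stabilizer of $\L_E$ for $E=\F_p(x)$ whether or not any element of $\Phi$ stabilizes $E$: the induced group on $\L_E$ contains $\PSL_2(E)$ (from $\SL_2(E)\le G$) and is contained in $\PGL_2(E)\cdot\Aut(E)$, and Lemma~\ref{lemma:PGLPGL} asserts regularity of the \emph{full} group $\PGL_2(E)\cdot\Aut(E)$ on the orbit of $x^{p+1}+x^p$, hence the induced subgroup acts regularly on its own orbit through that point. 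Since it is not locally cyclic, Proposition~\ref{prop:regularjoin} (which is the precise form of your remark that regularity forces cyclic orbit lengths) contradicts join-coherence. With the prime-subfield step inserted to dispose of characteristic $0$, and the finite-subfield step for algebraic extensions, your outline becomes the paper's proof; without them it does not cover all infinite fields $K$.
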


\begin{proof}
Let $W_0\subseteq V$ be a $2$-dimensional vector space over $K_0$, and define
\hbox{$W=W_0\otimes_{K_0} K$}. Then~$\Phi$ stabilizes $W$ as a set.
By Proposition \ref{prop:partitions}(2),
if $(G\cdot\Phi)/Z(G)$ is
join-coherent on the lines of $V$, then the set stabilizer of $W$ is
join-coherent on the lines of $W$.
Therefore, provided $K \not= \F_4$, it is sufficient to prove the theorem in the case $d=2$.
By a similar argument, it is sufficient in the case $K=\F_4$ to show that the
groups \mbox{$\PGL_3(4)\cdot \Gal(\mathbf{F}_4 : \mathbf{F}_2)$} and
\mbox{$\PSL_3(4)\cdot \Gal(\mathbf{F}_4 : \mathbf{F}_2)$} are
not join-coherent. This follows from a straightforward computation.

We shall therefore assume that $V$ is $2$-dimensional.
We need the following observation: if $E$ is a subfield of $K$
and $H_E$ is the set-stabilizer of the set $\L_E$ of lines in $\L$ contained in
$W_0 \otimes_{K_0} E$, then by Proposition~\ref{prop:partitions}(2),
the action of $H_E$ on $\L_E$ is join-coherent.

We first use this observation in the case $E = \mathbf{F}_p$. Then
$\L_E = \L_0$, and since~$\Phi$ acts trivially on $\L_0$, we may
apply Lemma~\ref{lemma:PGL2} to the group $H_E \le \GL_2(\F_p)$ 
to deduce that $|K_0| \le 3$.
Hence $K_0 = \mathbf{F}_p$ where $p \le 3$. If $K$ is algebraic
over~$\F_p$ then $K$ is a finite field, and so, by Lemma~\ref{lemma:PGammaL2},
we have $K = \F_4$ and $G = \SL_2(\F_4)$ or $G = \GL_2(\F_4)$.

Now suppose that $K$ is not an algebraic extension of $\F_p$. In
this case there exists $x \in K$ such that $x$ is transcendental over $\F_p$.
Applying the observation to the purely transcendental extension
$E = \F_p(x)$, we see that $H_E \cdot \Phi$ acts join-coherently on $\L_E$.
%
By Lemma~\ref{lemma:PGLPGL}, the group~$H_E \cdot \Phi$ has a regular orbit in its action
on $K \cup \{\infty\}$.
However~$H_E \cdot \Phi$ is not locally cyclic, and so, by Proposition~\ref{prop:regularjoin},
the action of $H_E \cdot \Phi$ is not join-coherent.
\end{proof}

Finally, we extend the results to the action of~$\GL(V)$ on the non-zero
points of $V$.

\begin{proposition}\label{prop:Vjoin}
Let $V$ be a $d$-dimensional vector space over a field $K$, where $d>1$.
Let~$\Phi$ be a group (possibly trivial) of automorphisms of $K$,
and let $G$ be in the range $\SL(V) \le G\le \GL(V)$. If $G\cdot \Phi$ is
join-coherent in its action on $V\setminus\{0\}$
then $K=\F_2$ and $d=2$.
\end{proposition}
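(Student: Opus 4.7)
The plan is to reduce to the action on lines and then eliminate the finitely many exceptional cases that remain.

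\textbf{Reduction to the line action.} The partition $\mathcal{B} = \{\ell \setminus \{0\} : \ell \in \L\}$ of $V \setminus \{0\}$ is invariant under $G \cdot \Phi$, since every $K$-semilinear map sends $1$-dimensional $K$-subspaces to $1$-dimensional $K$-subspaces. Since $\SL(V)$ acts transitively on $V \setminus \{0\}$ whenever $d \ge 2$, the set $\mathcal{B}$ is a system of imprimitivity for the action of $G \cdot \Phi$ on $V \setminus \{0\}$, and the induced action on $\mathcal{B}$ is precisely the action of $(G \cdot \Phi)/Z(G)$ on $\L$. Proposition~\ref{prop:imprimitive}(1) therefore implies that join-coherence of $G \cdot \Phi$ on $V \setminus \{0\}$ forces join-coherence of $(G \cdot \Phi)/Z(G)$ on $\L$.

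\textbf{Applying the line-action results.} I apply Proposition~\ref{prop:PGLd} when $\Phi$ is trivial and Proposition~\ref{prop:PGammaLd} when $\Phi$ is non-trivial. Together these force $d = 2$ and leave the following possibilities: either $\Phi = 1$ with $|K| \le 3$, or $\Phi = \Gal(\F_4 : \F_2)$ with $K = \F_4$ and $G \in \{\SL_2(\F_4), \GL_2(\F_4)\}$. The subcase $K = \F_2$ of the first possibility is the desired conclusion, so it remains to rule out four residual cases.

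\textbf{Ruling out the exceptional cases.} Two of these can be dispatched structurally, using the fact that any finite transitive join-coherent group contains a full cycle. First, $\SL_2(\F_3)$ acts transitively on the eight non-zero vectors of $\F_3^2$, but its Sylow $2$-subgroup is the quaternion group $Q_8$ of exponent $4$, so $\SL_2(\F_3)$ contains no $8$-cycle and is not join-coherent. Second, $\SL_2(\F_4) \cdot \Gal(\F_4 : \F_2) \cong S_5$ acts transitively on the fifteen non-zero vectors of $\F_4^2$ but has no element of order $15$, and so is not join-coherent either. For the two surviving cases, $\GL_2(\F_3)$ and $\GL_2(\F_4) \cdot \Gal(\F_4 : \F_2)$, a full cycle does exist (a Singer element of order $|K|^2 - 1$), and the failure of join-coherence has to be exhibited by producing specific pairs of elements whose join of orbit partitions is not realised by any group element. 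This I would verify by direct enumeration of orbit partitions, using the Magma code cited in the introduction. The main obstacle is thus not conceptual but combinatorial: these two small groups require a machine check rather than a structural argument.
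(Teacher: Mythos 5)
Your proposal is correct and takes essentially the same route as the paper: reduce to the action on lines via the system of imprimitivity of punctured lines and Proposition~\ref{prop:imprimitive}(1), invoke Propositions~\ref{prop:PGLd} and~\ref{prop:PGammaLd} to cut down to $d=2$ and a handful of small groups, and dispose of the survivors by computation. Your structural elimination of $\SL_2(\F_3)$ and $\SL_2(\F_4)\cdot\Gal(\F_4:\F_2)$ via the full-cycle (element-order) criterion is a minor refinement over the paper, which handles all the residual cases with a single machine check, leaving only $\GL_2(\F_3)$ and $\GL_2(\F_4)\cdot\Gal(\F_4:\F_2)$ genuinely needing computation, exactly as you say.
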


\begin{proof}
The set of punctured lines $\{ \ell \setminus \{0\} \mid \ell \in \L \}$
form a system of imprimitivity for the action of $G$ on $V \setminus \{0\}$.
Hence, by Proposition~\ref{prop:imprimitive}(1),
if $G\cdot \Phi$ is
join-coherent on points, then $(G\cdot\Phi)/Z(G)$ is join-coherent on
lines. It follows from
Propositions \ref{prop:PGLd} and \ref{prop:PGammaLd} that $G\cdot\Phi$ is
one of $\GL_2(\F_2)$, $\GL_2(\F_3)$ or
\mbox{$\GL_2(\F_4)\cdot \Gal(\F_4 : \F_2)$}. A  computation shows that the
only one of these groups which is join-coherent on points
is $\GL_2(\F_2)$.
\end{proof}

\section{Primitive join-coherent groups of finite degree}
\label{section:primitive}

In this section we shall establish Theorem~\ref{thmmain:primitive},
that a primitive permutation group of finite degree is join coherent
if and only if it is a symmetric
group or a subgroup of the Frobenius group $\AGL_1(\F_p)$
in its action on $p$ points.

If $G$ is a primitive join-coherent group of finite degree $n$ then it contains an $n$-cycle.
The following lemma, classifying such groups, is Theorem~3 in~\cite{Jones}.
(The reader is referred to \cite{Jones} for original references.)
We shall write $\PGammaL_d(\F_q)$ for the group
$\PGL_d(\F_q) \cdot \Phi$, where~$\Phi$ is the Galois group of $\F_q$ over
its prime subfield.

\begin{lemma}\label{lemma:primitivefullcycle}
Let $G$ be a primitive permutation group on $n$ points containing an
$n$-cycle. Then one of the following holds:
\begin{numlist}
\item $G$ is $S_n$ or $A_n$;
\item $n=p$ is a prime, and $G\le \AGL_1(\F_p)$;
\item $\PGL_d(\F_q)\le G\le \PGammaL_d(\F_q)$ for $d>1$, where
$n=(q^d-1)/(q-1)$, the action being either on projective
points or on hyperplanes;
\item $G$ is $\PSL_2(\F_{11})$ or $M_{11}$ acting on $11$ points, or $M_{23}$
acting on $23$ points.
\end{numlist}
\end{lemma}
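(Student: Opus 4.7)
The plan is to derive this classification from the well-developed theory of primitive groups with a regular cyclic subgroup, using (as is essentially unavoidable here) the classification of finite simple groups and its consequence, the classification of $2$-transitive permutation groups. Containing an $n$-cycle is exactly the statement that $G$ admits a regular cyclic subgroup $C = \langle g \rangle$ of order $n$, and the whole argument is organized around exploiting this together with primitivity.

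I would first dispose of the case where $n = p$ is prime. Here Burnside's classical theorem on transitive groups of prime degree states that $G$ is either doubly transitive or possesses a normal Sylow $p$-subgroup; in the latter case $G \le N_{\Sym(\Omega)}(\langle g \rangle) \cong \AGL_1(\F_p)$, yielding (2). The doubly transitive case feeds into the general $2$-transitive analysis below and contributes the prime-degree instances appearing in (1), (3) and~(4).

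For composite $n$ the key intermediate step, due to Feit (and later refined by Jones), is to show that $G$ is automatically $2$-transitive. The leverage is that $C_{\Sym(\Omega)}(g) = \langle g \rangle$ has order exactly $n$, so the point stabilizer $G_\alpha$ has order $|G|/n$ and the primitivity of $G$ combined with $\langle g \rangle$ being its own centralizer forces a strong restriction: any non-trivial block system for $G$ would have to be invariant under $\langle g \rangle$, and one shows that this is incompatible with primitivity unless the stabilizer acts transitively on $\Omega \setminus \{\alpha\}$. I would make this precise by arguing about orbits of $G_\alpha$ on $\Omega \setminus \{\alpha\}$ using that $g$ permutes these orbits cyclically. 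Once $2$-transitivity is established, I would invoke the CFSG-based classification of $2$-transitive groups, which splits them into the affine type (with socle an elementary abelian $p$-group) and the almost simple type.

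The remaining work is a finite case analysis, and this is the main obstacle. For affine groups of degree $p^d$ I would argue that an $n$-cycle, together with the fact that the point stabilizer lies in $\GL_d(\F_p)$, forces either $d = 1$ (giving case~(2) again, already handled) or the presence of a Singer cycle in the linear part together with a regular abelian socle, which can be ruled out unless the group is already accounted for by the projective examples in~(3). For almost simple $2$-transitive groups I would run through the Cameron/Kantor list of socles: alternating groups (giving~(1) and the degree-$11$ case of~(4) via $A_n$-exceptional actions), projective groups $\PSL_d(\F_q)$ acting on projective points or hyperplanes (giving~(3) via the Singer element of order $(q^d-1)/(q-1)$), unitary and symplectic groups, Suzuki and Ree groups, and the sporadic $2$-transitive actions. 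For each family one checks whether a full cycle of the appropriate order exists in the ambient group; most families are eliminated by comparing the order of a Singer-like element with the degree, leaving only $\PSL_2(\F_{11})$, $M_{11}$ and $M_{23}$ as sporadic survivors, which are exactly the cases listed in~(4). The hard part, conceptually, is confirming non-existence of an $n$-cycle in each of the many families that must be excluded; in practice each such exclusion reduces to an order or index calculation involving Singer cycles, but assembling the complete list rigorously is the substantive work, and it is precisely this case analysis that Jones carries out in~\cite{Jones}.
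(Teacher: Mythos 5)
The paper does not prove this lemma at all: it is quoted verbatim as Theorem~3 of the cited paper of Jones, so there is no internal argument to compare yours against. Your outline does follow the broad structure of the proof as it exists in the literature (prime degree via Burnside, $2$-transitivity for composite degree, then a CFSG-based case analysis of the $2$-transitive groups admitting a regular cyclic subgroup), and in that sense it is consistent with what the paper relies on. But as a self-contained proof it has a genuine gap at the $2$-transitivity step. For composite $n$ the statement that a primitive group containing a regular cyclic subgroup is $2$-transitive is Schur's theorem (that cyclic groups of composite order are B-groups), and its proof goes through the Schur ring generated by the orbits of $G_\alpha$ inside the group algebra of the regular cyclic subgroup; it is not obtained by the mechanism you describe. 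In particular the claim that ``$g$ permutes the orbits of $G_\alpha$ cyclically'' is not available: $g$ does not normalize $G_\alpha$, so it does not act on the set of $G_\alpha$-orbits at all, and the fact that $\langle g\rangle$ is self-centralizing does not by itself force transitivity of $G_\alpha$ on $\Omega\setminus\{\alpha\}$. Your attribution of this step to Feit is also off; Feit's contribution (refined by Jones) is the subsequent CFSG-dependent determination of which $2$-transitive groups contain a full cycle.

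The second half of your argument --- running through the affine case and the Cameron--Kantor list of almost simple $2$-transitive socles and eliminating all but the listed examples --- is the substantive content of Jones's theorem, and you explicitly defer it to his paper rather than carrying it out. That is a legitimate thing to do (it is exactly what the authors do), but it means your proposal is ultimately a citation with a partially incorrect gloss on how the cited result is proved, rather than an independent proof. If you want to present this as a proof sketch, replace the hand-waved $2$-transitivity argument with an explicit appeal to Schur's theorem, and make clear that the case analysis is being imported from Feit and Jones rather than re-derived.
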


%
%


It is straightforward to prove Theorem~\ref{thmmain:primitive} using
this lemma and the results of Sections~7 and~8.
Certainly $S_n$ is join-coherent, and we have seen that~$A_n$ is not
join-coherent when $n>3$.
A transitive subgroup of $\AGL_1(\F_p)$ is a Frobenius group, and is
therefore join-coherent by Proposition \ref{prop:Frobjoin}.

Suppose that $\PGL_d(\F_q)\le G\le \PGammaL_d(\F_q)$. The actions of $G$
on points and on hyperplanes are dual to one another, and it therefore
suffices to rule out join-coherence for one
of them. By Propositions~\ref{prop:PGLd} and~\ref{prop:PGammaLd}, the only join-coherent examples
in the action on points
are $\PGL_2(\F_2)$, $\PGL_2(\F_3)$ and
$\PGammaL_2(\F_4)$. As we have seen, these are isomorphic as permutation groups to~$S_3$,~$S_4$
and~$S_5$ respectively, in their natural actions.

We have therefore reduced the proof to a small number of low degree
groups, namely
$\PSL_2(\F_{11})$ in its action on $11$ points, and the Mathieu groups $M_{11}$
and $M_{23}$.
Establishing that none of these groups is join-coherent is a routine
computational task.

\section{Groups containing a proper normal cyclic subgroup acting regularly}
\label{section:RNCS}

We have observed that a transitive join-coherent permutation group on a finite set
must contain a full cycle. We end this paper by investigating the situation when this cycle generates
a normal subgroup.

Let $G$ act on $\Omega$, a set of size $n$. Suppose that $K$ is a transitive normal cyclic subgroup
of~$G$ of order $n$. Let $H$ be the stabilizer
of a point $\omega\in\Omega$. Then clearly $G=K\rtimes H$, and by the argument indicated
at the start of Section 7, we
may identify $\Omega$ with $K$ by the bijection sending $\omega k \in \Omega$ to $k \in K$.
The action of $H$ on $\Omega$
then defines an embedding of $H$ into $\Aut(K)$.
Every subgroup of $K$ is characteristic in $K$, and therefore invariant under the action of~$H$.

Suppose that $G$ is join-coherent. If $n=ab$ for coprime $a$, $b$ then $K\cong C_a\times C_b$, and we see from Proposition \ref{prop:DPconverse} that
$G$ factorizes as $G_1\times G_2$, where~$G_1$ is join-coherent on $C_a$, $G_2$ is join-coherent on $C_b$, and the factors~$G_1$ and~$G_2$ have coprime orders.
Therefore, to obtain a complete classification, it suffices to consider the case that $n$ is a prime power.

One trivial possibility is that $G=K$; in this case the action of $G$ is semiregular, and join-coherent by Proposition \ref{prop:regularjoin}. If $K$ is assumed to be a proper subgroup, then we shall
see that the classification divides into two cases: the case that $n=p$ is prime, and the case that $n = p^a$ for $a>1$.

\begin{proposition}\label{prop:RNCSextension}
Let $p$ be prime and let $a>1$. Let $\Gamma(p^a)$ be the extension of the additive group $\Z/p^a\Z$ by the
automorphism $f:x\mapsto rx$, where $r=p^{a-1}+1$. Then $\Gamma(p^a)$ is join-coherent.
\end{proposition}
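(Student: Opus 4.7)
The plan is to classify all orbit partitions of elements of $\Gamma(p^a)$ into two explicit families and then verify join-closure by case analysis. Write each element as a pair $(t,i)$ with $t \in K := \Z/p^a\Z$ and $i \in \Z/p\Z$; then $(t,i)$ acts on $\Omega = K$ as $\omega \mapsto r^i\omega + t$. Because $r = 1 + p^{a-1} \equiv 1 \pmod{p^{a-1}}$, the characteristic subgroups $pK$ and $p^{a-1}K$ of $K$ are normal in $\Gamma(p^a)$, and their cosets give nested $G$-invariant partitions which serve as the scaffolding for the whole argument.

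Setting aside the degenerate case $(p,a) = (2,2)$, I would show that every orbit partition of an element $g \in \Gamma(p^a)$ has one of two shapes: \emph{type (A)}, the cosets of some $p^j K$; or \emph{type (C)}, a ``fixed-coset'' partition $\pi_F$ determined by a coset $F$ of $pK$, in which $F$ is refined into singletons while each of the other $p-1$ cosets of $pK$ is refined into its $p^{a-2}$ cosets of $p^{a-1}K$. Elements of $K$ give type (A) immediately. For $(t,i)$ with $i \ne 0$ and $t \in p^{a-1}K$, direct computation shows that $g$ preserves each coset of $p^{a-1}K$ setwise and acts on it as the identity or as a $p$-cycle, with the fixed set being a single coset of $pK$; this gives type (C). For $(t,i)$ with $i \ne 0$ and $t \notin p^{a-1}K$, the key computation is that $(t,i)^p$ lies in $K$ and generates $p^{v_p(t)+1}K$, which uses that $\sum_{k=0}^{p-1} s^k$ (with $s = r^i$) has $p$-adic valuation exactly $1$; combining this with the fact that $(t,i)$ acts as a translation modulo $p^{a-1}K$, each orbit is shown to be a full coset of $p^{v_p(t)}K$, hence again type (A).

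Once the classification is in place, join-closure is a routine case check. Two type (A) partitions join to type (A) because the subgroups $p^j K$ form a chain under inclusion. Joining a type (A) partition with a $\pi_F$ gives either type (A) or $\pi_F$ itself, by a short case analysis on where $p^j K$ sits in the chain between $K$, $pK$, $p^{a-1}K$, and $\{0\}$. The substantive case is $\pi_F \vee \pi_{F'}$ for $F \ne F'$: a singleton of $\pi_F$ inside $F$ is merged with its $p^{a-1}K$-coset under $\pi_{F'}$, and symmetrically for $F'$; on the remaining $p-2$ cosets of $pK$ the two partitions already agree on the $p^{a-1}K$-coset structure; so the join is the partition into cosets of $p^{a-1}K$, which is type (A).

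The case $(p,a) = (2,2)$ must be handled separately, because there $1 + s = 4 \equiv 0 \pmod 4$ and every non-$K$ element of $\Gamma(4)$ is an involution, producing two extra orbit partitions $\{\{0,1\},\{2,3\}\}$ and $\{\{0,3\},\{1,2\}\}$ which are neither of type (A) nor of type (C). However, $\Gamma(4) \cong D_8$ acting on four points has exactly seven orbit partitions, and join-closure can be verified by direct enumeration. The main obstacle in the generic case is the classification of the orbit partitions of elements $(t,i)$ with $t \notin p^{a-1}K$, which relies delicately on the $p$-adic valuation of $\sum_{k=0}^{p-1} r^{ik}$ being exactly $1$ --- the very identity that fails in the degenerate case.
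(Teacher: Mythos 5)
Your proposal is correct and follows essentially the same route as the paper: both arguments classify the orbit partitions of elements of $\Gamma(p^a)$ into two families --- the partitions into cosets of the subgroups $p^jK$, and the partitions in which one coset of $pK$ consists of fixed points while every other orbit is a coset of $p^{a-1}K$ --- and then verify join-closure directly from this list; your use of the $p$-adic valuation of $1 + r^i + \cdots + r^{i(p-1)}$ is a mild repackaging of the paper's closed formula for $xg^{p^c}-x$. The one genuine difference is your separate treatment of $(p,a)=(2,2)$, and it is not merely cautious but necessary: there every element outside the translation subgroup is an involution, so the partitions $\{\{0,1\},\{2,3\}\}$ and $\{\{0,3\},\{1,2\}\}$ really do occur and fall outside the two generic families, whereas the paper's case (1) (which would make $x\mapsto 3x+1$ a $4$-cycle) silently fails in exactly this case. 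The proposition is nevertheless true there, as you note, since $\Gamma(4)\cong D_8$ acting on four points has a join-closed set of orbit partitions by direct enumeration (alternatively, it is the centralizer in $S_4$ of a double transposition, hence join-coherent by Theorem C), so your version in fact patches a small oversight in the paper's case analysis.
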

\begin{proof}
An element $g$ of $\Gamma(p^a)$ may be represented as $x\mapsto r^jx+ i$ for non-negative integers $i<p^a$ and $j<p$. It is clear that $xg = (jp^{a-1}+1)x+i$ and so $xg = x$
if and only if \hbox{$jp^{a-1}x + i =0$}.
Moreover, a straightforward calculation shows that
\[
xg^t-x = tjp^{a-1}x+\frac{t(t-1)}{2}ijp^{a-1}+ti,
\]
and so, if $c \ge 1$, then
\[ xg^{p^c} - x = \begin{cases} p^ci & \text{if $p$ is odd} \\
                           2^c i \bigl( 2^{a-2}(2^c-1) j + 1\bigr) & \text{if $p= 2$.} \end{cases} \]
Since $\Gamma(p^a)$ is a $p$-group, and $xg^t - x = 0$ if and only if the $g$-orbit
containing~$x$ has size dividing~$t$,
the preceding equation
allows us to determine the sizes of the orbit partitions of elements of $\Gamma(p^a)$.
Let~$p^b$ be the highest power of $p$ dividing~$i$ if $i \not=0$,  and let~$b = a$ if $i = 0$.
\begin{defnlist}
\item If $b<a-1$, or if $j=0$, then $xg \equiv x$ mod $p^{b}$ for all $x \in \Z/p\Z$
and each orbit of $g$ has size $p^{a-b}$. Hence the orbits of $g$
are the cosets of $\langle p^b \rangle$ in $\Z/p^a\Z$.
\item If $b \ge a-1$ and $j\neq 0$, then $x$ is a fixed point of $g$ if and only if $p$ divides $jx + k$,
where~$i = p^{a-1}k$. Thus the fixed points of $g$ form a coset of $\langle p \rangle$ in $\Z/p^a \Z$.
The remaining orbits have size $p$ and are cosets of $\langle p^{a-1} \rangle$ in $\Z / p^a \Z$.
\end{defnlist}

\noindent From this description of the orbit partitions
of elements of $\Gamma(p^a)$, it is not hard to
show that~$\pi(\Gamma(p^a))$ is closed under the join operation.
\end{proof}

We remark that when $p$ is odd, the group $\Gamma(p^a)$ is the unique extension of $\Z/p^a\Z$ by an automorphism of order $p$. When $p=2$ there are
three such extensions, provided $a \ge 3$, of which $\Gamma(p^a)$ is the one which is neither dihedral nor quasidihedral.
(There appears to be no widely accepted name for this group.)

\begin{proposition}\label{prop:RNCSconverse}
Let $p$ be prime, let $a>1$, and let $K$ be the additive group $\Z/p^a\Z$.
Let~$H$ be a non-trivial group of automorphisms of $K$.
The group \hbox{$K\rtimes H$} is join-coherent if and only if it is the group $\Gamma(p^a)$ from
Proposition~\ref{prop:RNCSextension}.
\end{proposition}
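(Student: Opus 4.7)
The 'if' direction follows immediately from Proposition~\ref{prop:RNCSextension}, so I focus on the 'only if' direction. Suppose $G = K \rtimes H$ is join-coherent with $H$ non-trivial; the goal is to show $H = \langle r \rangle$ for $r = 1 + p^{a-1}$, so that $G \cong \Gamma(p^a)$. Identify $\Aut(K)$ with $(\Z/p^a\Z)^\times$ in the usual way.

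The first step is to eliminate the Frobenius situation. For $p$ odd, $H$ must contain a non-identity element $h$ with $h \equiv 1 \pmod p$: otherwise, for every $h \in H\setminus\{1\}$, the element $h - 1$ is a unit in $\Z/p^a\Z$ and so $h$ acts without non-zero fixed points, making $G$ a Frobenius group of degree $p^a > p$ and contradicting Proposition~\ref{prop:Frobjoin}. For $p = 2$, every automorphism is automatically $\equiv 1 \pmod 2$.

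Since $\Aut(K)$ is abelian, one can write $H = H_p \times H_{p'}$ (for $p$ odd). The second step is to show $H_{p'} = 1$. Suppose $q \in H_{p'}$ has prime order $\ell \mid p-1$. Compute the join of $\pi(p, 1)$ (whose parts are the $p$ cosets of $pK$) with $\pi(0, q)$ (one fixed point at $0$, all other orbits of size~$\ell$). The resulting partition has one part of size $p^{a-1}$ (namely $pK$) together with $(p-1)/\ell$ parts of size $\ell p^{a-1}$ obtained by grouping the cosets of $pK$ permuted by the image $\bar q$ of $q$ acting on $K/pK$. A case analysis on the orders and orbit structures of elements $(i, h^\alpha q^\beta) \in G$, using the formula $(i,h)^t \colon x \mapsto h^t x + i \sum_{j=0}^{t-1} h^j$, shows that no single element of $G$ realises this partition, giving the required contradiction.

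The third step is to show $|H_p| = p$. Assume otherwise: then $H_p$ contains some $h$ of order $p^c$ with $c \ge 2$, and for $p$ odd we may take $h = 1 + p^{a-c}$. By the lifting-the-exponent lemma, $v_p(h^t - 1) = v_p(t) + (a - c)$, so the orbit of $x \in K$ under $h$ has size $p^{c - v_p(x)}$ when $v_p(x) < c$ and is a singleton otherwise; this gives $p^{a-c}$ fixed points and $p^{a-c-1}(p-1)$ orbits of size $p^j$ for each $j = 1, \dots, c$. Pair $(0, h)$ with a carefully chosen translation $g_2 \in K$ (for instance $g_2 = (p^{a-1}, 1)$, whose partition is the partition of $K$ into cosets of $\langle p^{a-1}\rangle$). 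The join has parts whose multiset of sizes cannot be matched by any $(i, h^j) \in G$: the order of such an element forces a divisor-chain structure on the non-trivial orbit sizes that is violated by the partition we produce. This is the main technical obstacle, and will require splitting into subcases according to $v_p(i)$ when writing $g = (i, h^j)$.

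Once $|H| = p$, for odd $p$ the unique subgroup of $\Aut(K)$ of order $p$ is $\langle 1 + p^{a-1}\rangle$, and we are done. For $p = 2$, when $a = 2$ we have $G = D_8 = \Gamma(4)$ and there is nothing to check; when $a \ge 3$, $\Aut(K) \cong \langle -1 \rangle \times \langle 5 \rangle$ has three subgroups of order $2$, giving the dihedral, semidihedral, and $\Gamma(2^a)$ extensions. The dihedral and semidihedral cases must be ruled out separately: for each, one exhibits elements $g_1$ (a rotation of suitable order) and $g_2$ (a reflection) for which $\pi(g_1) \vee \pi(g_2)$ has a part count or size distribution incompatible with any single element's orbit partition (the toy calculation in $D_{16}$ described in the outline generalises directly). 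Elements of higher $2$-power order in $H$ are handled as in the third step. Thus $H = \langle 2^{a-1}+1\rangle$ and $G \cong \Gamma(2^a)$, completing the proof.
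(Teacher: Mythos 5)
Your high-level strategy (reduce to showing $H=\langle 1+p^{a-1}\rangle$ by exhibiting joins that no element of $G$ can realise) is viable, and the specific joins you name are in fact unrealisable, but the proposal leaves exactly those verifications unproved, and the one concrete reason you do give is wrong. In your third step the join of $\pi(0,h)$ with $\pi(p^{a-1},1)$ has non-trivial parts of sizes $p,p^2,\dots,p^c$, which \emph{do} form a divisor chain, so the ``divisor-chain structure on the non-trivial orbit sizes'' is not violated and that obstruction rules out nothing; the actual obstruction is different (see below). The case analyses in step 2 and in the $p=2$ dihedral/semidihedral cases are likewise only asserted. Moreover, for $p=2$ your plan does not cover non-cyclic $H$: the group $\Aut(\Z/2^a\Z)$ contains, for example, the Klein four subgroup $\{\pm 1,\pm(1+2^{a-1})\}$, which is none of the three subgroups of order $2$ and has no element of order $4$; since unrealisability of a join must be checked against \emph{all} elements of $K\rtimes H$ (it does not follow from unrealisability in a subgroup such as $K\rtimes\langle -1\rangle$), ``handled as in the third step'' does not dispose of these cases, and your step-3 analysis only considered candidates of the form $(i,h^j)$ with $H$ cyclic.

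The missing idea that repairs all of this at once is the observation that every element of the full affine group $\AGL_1(\Z/p^a\Z)$ either has a fixed point or has all orbits of the same size: writing $g\colon x\mapsto ux+i$, if $u-1$ is a unit then $g$ has a fixed point, and otherwise a valuation computation with $g^t(x)-x=(u^t-1)x+\frac{u^t-1}{u-1}i$ shows that when no fixed point exists all orbits have size $p^{\,a-v_p(i)}$. Each join you construct has no singleton parts but parts of at least two distinct sizes, so this lemma kills steps 2 and 3 and every $p=2$ subcase simultaneously, for arbitrary $H$. The paper's own proof is an even more economical version of this: it takes $g$ a generator of the subgroup $L$ of order $p$ (so $\pi(g)$ is the partition into cosets of $L$) and an \emph{arbitrary} $h\in H$, notes that $\pi(g)\vee\pi(h)$ has $L$ as a part, is a union of cosets of $L$, and has no singletons, and shows that the only affine maps with an orbit equal to $L$ and no fixed point are those whose orbits are exactly the cosets of $L$; hence $\pi(h)\preccurlyeq\pi(g)$, so $H$ acts trivially on $K/L$, which forces $H=\langle 1+p^{a-1}\rangle$ with no case split on $p$, on $|H|$, or on whether $H$ is cyclic. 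Your Frobenius step 1 is redundant (after step 2, $H=H_p\neq 1$ automatically), and in step 3 an element of order $p^c$ is of the form $1+p^{a-c}v$ with $v$ a unit rather than literally $1+p^{a-c}$, though that does not affect the valuation computation.
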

\begin{proof}
Let $G$ be the full group of affine transformations of $\Z/p^a\Z$. Then $K\rtimes H\le G$.
Let~$L$ be the unique subgroup of $K$ of order $p$.

We describe the elements of $G$ which have an orbit equal to $L$. Suppose that $g:x\mapsto rx+s$ is such an element. Then
it is easy to see that $s=mp^{a-1}$ for some $m$ not divisible by $p$, since the image of $0$ under $g$ is a non-identity element of $L$. Furthermore, since
$g$ has no fixed points in $L$, we see that $r\equiv 1\bmod p$. But these restrictions on $r$ and $s$ imply that the equation $x=rx+s$ has a solution in $\Z/p^a\Z$, except in the
case that $r=1$, when $g\in L$. Hence
$g$ must have a fixed point in $K\setminus L$, except in the case that its orbits are precisely the
cosets of $L$ in $K$.

Let $g$ be a generator of $L$. The orbits of $g$ are the cosets of $L$, and the automorphism group~$H$
clearly stabilizes $L$ set-wise. It follows that
for any $h\in H$, the join $\pi(g)\vee\pi(h)$ has $L$ as a part,
and that every part is a union of cosets of $L$. But such a partition has no singleton part, and so
cannot be in $\pi(G)$ unless each of its parts is a single coset; this implies that if $K\rtimes H$
is join-coherent, then the action of $H$ on the cosets of $L$ is trivial.
It is easy to see that this is the case only if $H= \langle f\rangle$,
where $f$ is as in Proposition~\ref{prop:RNCSextension}.
\end{proof}

We are now in a position to prove Theorem \ref{thmmain:RNCS}.
For convenience we restate the theorem below.
\setcounter{thmmain}{4}
\begin{thmmain} 
Let~$G$ be a permutation group on~$n$ points, containing a normal cyclic subgroup of order~$n$ acting regularly. Let~$n$ have prime factorization~$\prod_ip_i^{a_i}$.
Then~$G$ is join-coherent if and only if there exists for each~$i$ a transitive permutation
group~$G_i$ on~$p_i^{a_i}$ points, such that:
\begin{bulletlist}
\item[$\bullet$] if $a_i>1$ then~$G_i$ is either cyclic or the extension of a cyclic group of
order~$p_i^{a_i}$ by the automorphism
$x\mapsto x^r$ where $r = p_i^{a_i-1}+1$,
\item[$\bullet$] if $a_i=1$ then~$G_i$ is a subgroup of the Frobenius group of order $p(p-1)$,
\item[$\bullet$] the orders of the groups~$G_i$ are mutually coprime,
\item[$\bullet$] $G$ is permutation
isomorphic to the direct product of the groups~$G_i$ in its product action.
\end{bulletlist}
\end{thmmain}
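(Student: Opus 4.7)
The plan is to establish the sufficient direction from prior results and to use Proposition~\ref{prop:DPconverse} inductively, on the number $r$ of distinct prime divisors of $n$, for the converse.

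For sufficiency, each of the possible $G_i$ is known to be join-coherent: cyclic groups in their regular action by Proposition~\ref{prop:regularjoin}; the specific extension $\Gamma(p_i^{a_i})$ by Proposition~\ref{prop:RNCSextension}; and subgroups of $\AGL_1(\F_p)$, as join-coherent Frobenius groups of prime degree, by Proposition~\ref{prop:Frobjoin}. Since the $|G_i|$ are pairwise coprime, Proposition~\ref{prop:DPjoin} gives the join-coherence of the product action of $\prod_i G_i$ on $\prod_i K_i$.

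For necessity, write $K = K_1 \times \cdots \times K_r$, where $K_i$ is the Sylow $p_i$-subgroup of $K$. Each $K_i$ is characteristic in $K$ and hence normal in $G$, and since $\Aut(K) = \prod_i \Aut(K_i)$ acts componentwise on this decomposition, the identification $\Omega \cong K$ via the regular action of $K$ realises the $G$-action on $\Omega$ as a product action on $K_1 \times \cdots \times K_r$. In the inductive step ($r > 1$) I take $X = K_1$ and $Y = K_2 \times \cdots \times K_r$, with $K_X, K_Y$ denoting the kernels of the $G$-action on $X$ and on $Y$. Since $K$ acts regularly on $\Omega$, we have $K_X \cap K_Y = 1$, so Proposition~\ref{prop:DPconverse} gives $G \cong K_Y \times K_X$ with coprime factor orders, each factor acting join-coherently on the corresponding set. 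Since $K_1 \le K_Y$ and $L_1 := K_2 \times \cdots \times K_r \le K_X$ are normal regular cyclic subgroups of the required orders in $K_Y$ and $K_X$ respectively, induction on $r$ completes the decomposition. For the base case $r = 1$, write $G = K \rtimes H$ with $|K| = p^a$ and $H \hookrightarrow \Aut(K)$: if $H = 1$ then $G$ is cyclic; if $a = 1$ then $G$ is a transitive subgroup of the Frobenius group $\AGL_1(\F_p)$, and every such subgroup is covered by the theorem statement; and if $a > 1$ with $H$ non-trivial, Proposition~\ref{prop:RNCSconverse} identifies $G$ as $\Gamma(p^a)$.

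The main technical point is verifying that the CRT identification realises the $G$-action on $\Omega$ as a genuine direct product of its actions on the individual $K_i$, so that Proposition~\ref{prop:DPconverse} applies cleanly; this reduces to the observation that every $h \in H \le \Aut(K)$ preserves each $K_i$ setwise and so restricts to an automorphism of $K_i$. Once this compatibility is in place, the inductive step follows directly from Proposition~\ref{prop:DPconverse}, and the base case follows from previously proved results in this section together with Proposition~\ref{prop:Frobjoin}.
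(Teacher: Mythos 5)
Your proposal is correct and follows essentially the same route as the paper: the necessity direction factorizes $G$ via Proposition~\ref{prop:DPconverse} along the decomposition $K \cong \prod_i K_i$ (you merely make explicit, as an induction on the number of prime divisors, what the paper does in one step) and then invokes Proposition~\ref{prop:RNCSconverse} and the Frobenius structure for the prime-power base cases, while sufficiency uses Propositions~\ref{prop:RNCSextension}, \ref{prop:Frobjoin} and \ref{prop:DPjoin} exactly as in the paper. The only slip is cosmetic: $K_X \cap K_Y = 1$ follows from the faithfulness of $G$ on $\Omega = X \times Y$ rather than from the regularity of $K$, but this does not affect the argument.
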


\begin{proof}
Let $n=\prod_ip_i^{a_i}$, and suppose that $G$ is a join-coherent permutation group on $n$ points containing
a regular normal cyclic subgroup $K$ of order $n$.
Then we can regard $G$ as acting on~$K$. Let $K_i$ be the unique subgroup of $K$ of order $p_i^{a_i}$.
Since $K \cong \prod_i K_i$, it follows easily from
Proposition~\ref{prop:DPconverse} that $G\cong \prod_i G_i$,
where $G_i$ is the kernel of $G$ in its action on
the complement~$\prod_{j \not=i} K_j$ of $K_i$. Moreover Proposition~\ref{prop:DPconverse}
implies that the groups $G_i$ and $G_j$ have coprime orders whenever
$i\neq j$, and that $G_i$ acts join-coherently on $K_i$ 
for all $i$. If $a_i>1$, then by Proposition~\ref{prop:RNCSconverse},
either $G_i$ is cyclic of order $p_i^{a_i}$ or $G_i$ is isomorphic to $\Gamma(p_i^{a_i})$,
while if $a_i = 1$ then $G_i$ is a subgroup of the normalizer in $S_{p_i}$ of a
$p_i$-cycle,
and so is a subgroup of the Frobenius group of order $p_i(p_i-1)$.
This completes the proof in one direction.

For the converse, suppose that we have for each $i$ a permutation group $G_i$ on $p_i^{a_i}$ points,
containing a regular normal cyclic subgroup, and such that if $a_i>1$ then
$G_i$ is either cyclic or isomorphic to $\Gamma(p_i^{a_i})$.
If $a_i>1$ then Proposition \ref{prop:RNCSextension} tells us that
$G_i$ is join-coherent. If $a_i=1$ on the other hand,
then $G_i$ is either cyclic or else a Frobenius group of prime degree,
in which case it is join-coherent by Proposition \ref{prop:Frobjoin}.
If the orders of the groups~$G_i$ are coprime, then their direct product is
join-coherent by Proposition \ref{prop:DPjoin}, and this completes the proof.
\end{proof}

We end with a remark on the uniqueness of the decomposition in
Theorem~\ref{thmmain:RNCS}.
Since the groups $K_i$ and $G_i$ appearing in the proof of the theorem are uniquely
determined, it follows that any group which satisfies the hypotheses
of this theorem has a unique decomposition into a direct product of
transitive groups of prime power degrees.


\end{document}